\documentclass[11pt]{amsart}              
\usepackage{latexsym,mathtools}    
\usepackage{epsfig,setspace}  
\usepackage{a4}
\usepackage{comment} 
\usepackage{mathrsfs}
\usepackage{amssymb}
\usepackage{amsthm}
\usepackage{amsbsy} 
\usepackage{upgreek}
\usepackage{relsize}
\usepackage{tikz-cd}
\usepackage{ytableau}

\usepackage{amsfonts,amssymb,latexsym,amscd,amsmath,euscript,enumerate,verbatim,calc}  
\allowdisplaybreaks
\usepackage{url}
\usepackage{hhline} 
\usepackage{pifont}
\usepackage{relsize}
\usepackage[toc,page]{appendix}
\usepackage[backend=biber,maxbibnames=999]{biblatex}  

\DeclareFieldFormat{doi}{\href{https://doi.org/#1}{\textsc{doi}}}

\DeclareFieldFormat{url}{\href{#1}{\textsc{url}}}
\renewbibmacro*{doi+eprint+url}{
  \printfield{doi}
  \newunit\newblock
  \iftoggle{bbx:eprint}{
    \usebibmacro{eprint}
  }{}
  \newunit\newblock
  \iffieldundef{doi}{
    \usebibmacro{url+urldate}}
  {}
}
\renewbibmacro{in:}{}
\AtEveryBibitem{
  \ifentrytype{book}
  {\clearfield{pages}}
}

\usepackage[colorlinks=true,linkcolor=blue,citecolor=magenta]{hyperref}

\theoremstyle{definition}

\newtheorem*{theorem*}{Theorem}
\newtheorem{theorem}{Theorem}[section]   
\newtheorem{lemma}[theorem]{Lemma}
\newtheorem{proposition}[theorem]{Proposition}
\newtheorem{question}[theorem]{Question}
\newtheorem{problem}[theorem]{Problem}
\newtheorem{conjecture}[theorem]{Conjecture} 
\newtheorem{corollary}[theorem]{Corollary} 

\newtheorem{definition}[theorem]{Definition}

\newtheorem{remark}[theorem]{Remark} 
\newtheorem{example}[theorem]{Example}

\def\F{{\mathbb F}}

\def\PP{{\mathbb P}}
\def\QQ{{\mathbb Q}}
\def\Fq{{\mathbb F}_q}
\def\LL{{\mathrm L}}

\def\MM{{\mathrm M}}
\def\mm{{\mathsf m}}
\def\NN{{\mathrm N}}
\def\he{\mathscr{H}}
\def\Irr{{\mathrm{Irr}}}
\def\Par{{\mathrm{Par}}}
\def\CC{{\mathcal{C}}}

\def\GL{{\mathrm{GL}}}

\def\ch{\mathrm{ch}}
\def\Ind{\mathrm{Ind}}
\newcommand{\resbrack}[2]{\genfrac{\{}{\}}{0pt}{}{#1}{#2}_{\mathrm{res}}}
\makeatletter
\@namedef{subjclassname@2020}{\textup{2020} Mathematics Subject Classification}
\makeatother

\title[Enumerating matrices with prescribed entries in an adjoint orbit]{Enumerating matrices with prescribed\\ entries in an adjoint orbit}    
\author{Samrith Ram} 
\address{Indraprastha Institute of Information Technology Delhi, New Delhi, India.}
\email{samrith@iiitd.ac.in}
\subjclass[2020]{15B33, 05A15, 05A05, 05E05, 05E10, 11G25} 
\keywords{adjoint orbit, finite field, finite general linear group, prescribed matrix entries, $q$-Whittaker function, Hall--Littlewood function, chromatic quasisymmetric function}

\begin{document} 
\begin{abstract}
We study intersections of conjugacy classes of square matrices over a finite field with affine coordinate subspaces, or equivalently matrices in a fixed adjoint orbit with prescribed entries. Our main result treats the case of prescribed columns: for a partially defined linear map we give a Hall scalar product formula for the number of extensions to an endomorphism with prescribed similarity invariants. This formula is expressed in terms of skew modified Hall--Littlewood functions and $q$-Whittaker functions. As applications, we count monic matrix polynomials over $\Fq$ with prescribed Smith normal form and with prescribed determinant, and recover the Gerstenhaber--Reiner formula for the number of square matrices with a fixed characteristic polynomial. We also note that known point-count formulas for Hessenberg varieties imply related formulas for Hessenberg supports involving chromatic quasisymmetric functions, motivating polynomiality questions for more general supports and prescribed affine slices.
\end{abstract}
\maketitle
\setcounter{tocdepth}{2}   
\tableofcontents   

\section{Introduction}
Let $n$ be a positive integer and let $\Fq$ be the finite field with $q$ elements. Write $\MM_n(\Fq)$ for the algebra of $n\times n$ matrices over $\Fq$ and $\GL_n(\Fq)$ for its group of units. The conjugacy classes in $\MM_n(\Fq)$ are the orbits under the adjoint action of $\GL_n(\Fq)$ on $\MM_n(\Fq)$. Green~\cite{MR72878} parametrized these classes by functions from the set of monic irreducible polynomials over $\Fq$ to integer partitions; this parametrization is closely related to the theory of Hall polynomials and Hall--Littlewood symmetric functions as treated by Macdonald~\cite{MR1354144}.

The object of this paper is to enumerate matrices in a fixed adjoint orbit after constraining specified entries. Let $[n]=\{1,\ldots,n\}$ and let $S\subseteq [n]\times [n]$. We regard the entries in $S$ as free and prescribe the entries outside $S$.
\begin{definition}\label{def:prescribed-count}
Given a function $\varphi:([n]\times [n])\setminus S\to \Fq$ and a conjugacy class $\CC$ of $\MM_n(\Fq)$, let $N(\CC;S;\varphi)$ denote the number of matrices $A=(a_{ij})\in \CC$ such that $a_{ij}=\varphi(i,j)$ whenever $(i,j)\notin S$.
\end{definition}

Equivalently, the prescribed entries determine an affine coordinate subspace $\mathcal A_{S,\varphi}\subseteq \MM_n(\Fq)$, and $N(\CC;S;\varphi)=|\CC\cap \mathcal A_{S,\varphi}|.$ When $\varphi=0$, we write $V_S:=\mathcal A_{S,0}$ for the coordinate subspace consisting of matrices whose support is contained in $S$.

\begin{example}
Let $n=3$ and $q=5$. Take
\[
S=\{(1,1),(1,3),(2,2),(3,2),(3,3)\}
\]
and define $\varphi$ on the complement of $S$ by
\[
\varphi(1,2)=1,\qquad \varphi(2,1)=4,
\qquad \varphi(2,3)=3,
\qquad \varphi(3,1)=0.
\]
Then $N(\CC;S;\varphi)$ is the number of matrices in the conjugacy class $\CC$ of the form
\[
\begin{pmatrix}
* & 1 & * \\
4 & * & 3 \\
0 & * & *
\end{pmatrix}.
\]
\end{example}

We are primarily interested in the following problem.
\begin{problem}\label{prob:main}
Given $\CC,S$ and $\varphi$, determine $N(\CC;S;\varphi)$.
\end{problem}

In the affine-coordinate formulation above, Problem~\ref{prob:main} may be viewed as a finite-field point-counting problem for affine-linear slices of adjoint orbits.

We first recall several closely related enumeration problems that have appeared in the literature. In the specialization where $\varphi$ is identically zero, a support $S$ determines the coordinate subspace $V_S$, and Problem~\ref{prob:main} asks for the size of the intersection of a conjugacy class with a structured linear subspace of $\MM_n(\Fq)$. 
This zero-prescription point of view is closely connected with a substantial literature on finite-field matrix enumeration. Counting matrices of fixed rank with prescribed zero entries is a $q$-analogue of counting permutations with restricted positions, or equivalently rook placements~\cite{LewisLiuMoralesPanovaSamZhang2011}. For a coordinate subspace $V_S\subseteq \MM_n(\Fq)$, the number of matrices in $V_S$ of rank $r$ may be written as
\[
\sum_{\substack{\CC\subseteq \MM_n(\Fq)\\ \operatorname{rank}(\CC)=r}} |\CC\cap V_S|,
\]
where the sum is over adjoint orbits of rank $r$. Thus orbit-support intersections refine the usual restricted-rank enumeration problem. Haglund~\cite{Haglund1998QRook} proved polynomiality and positivity results for Young-diagram supports using $q$-rook polynomials; related questions for restricted entries were studied by Lewis, Liu, Morales, Panova, Sam and Zhang~\cite{LewisLiuMoralesPanovaSamZhang2011}. Klein, Lewis and Morales~\cite{KleinLewisMorales2013} extended these ideas to complements of skew Young diagrams and Rothe diagrams, and emphasized that for arbitrary supports one should not expect polynomiality in $q$: Stembridge~\cite{Stembridge1998Kontsevich} had already found examples, related to a conjecture of Kontsevich, where the corresponding point counts are not polynomials in $q$. Our results suggest analogous questions in which rank is replaced by the datum of conjugacy-class type (Definition~\ref{def:class-type}).

Another important family of examples comes from triangular and Hessenberg supports (Section~\ref{subsec:hessenberg-spaces}). The distribution of Jordan forms among upper triangular nilpotent matrices has been studied from several viewpoints, including the work of Kirillov and Melnikov~\cite{MR1601186}, Ekhad and Zeilberger~\cite{MR1364064}, Yip~\cite{Yip2018RookJordan}, and Fuchs and Kirillov~\cite{FuchsKirillov2022Jordan}. Hessenberg varieties (Definition~\ref{def:hessenberg-variety}), introduced by De Mari, Procesi and Shayman~\cite{MR1043857} and developed in works of Tymoczko~\cite{MR2275912} and Brosnan and Chow~\cite{MR3783432}, connect these enumeration problems with chromatic quasisymmetric functions of Shareshian and Wachs~\cite{MR3488041} (defined in Section~\ref{subsec:hessenberg-spaces}). This connection is made explicit in recent point-counting formulas for Hessenberg varieties due to Abreu, Nigro and Ram~\cite{abreu2024}, Gagnon~\cite{MR4659580}, and Bardestani, Mallahi-Karai, Ram and Salmasian~\cite{BardestaniMallahiKaraiRamSalmasian2026}.

We now describe the main result of this paper. It treats Problem~\ref{prob:main} in the case where the prescribed entries consist of specified columns. If a subset $C\subseteq [n]$ of columns is prescribed, then the support of the free entries is $S=[n]\times ([n]\setminus C)$, and the prescribed columns define a linear map on the coordinate subspace spanned by $\{e_j:j\in C\}$. Thus this part of Problem~\ref{prob:main} becomes one of counting extensions of a partially defined linear transformation to an endomorphism with prescribed similarity invariants. 
For an ordinary operator over $\Fq$, rational canonical form says that its similarity class is encoded by a function from $\Irr_q$ to $\Par$, where $\Irr_q$ is the set of monic irreducible polynomials over $\Fq$ and $\Par$ is the set of partitions. The classification for linear maps defined only on a subspace is less standard, so we spell out the notation used below; see Section~\ref{sec:preliminaries}, especially Theorem~\ref{thm:similarity} and the notation following Equation~\eqref{eq:sct}, for details. If $T$ is such a partial linear map, then $T$ has a maximal invariant subspace, and its operator part is simply the operator obtained by restricting $T$ to this subspace. The similarity invariants of $T$ consist of an integer partition $\mu$, called the sequence of defect dimensions, and the ordinary operator invariant $\MM$ of its operator part. Thus $\MM$ is a function from $\Irr_q$ to $\Par$. 
Since $\MM(f)$ is empty for all but finitely many $f$, we write $\MM=\{(f_1,\mu^1),\ldots,(f_r,\mu^r)\}$ when $\MM(f_i)=\mu^i$ and $\MM(g)$ is empty for every other irreducible polynomial $g$.

Let $T$ be a linear map defined on a subspace of $\Fq^n$ with similarity invariants $(\mu,\MM)$, and let $\eta((\mu,\MM),\LL)$ denote the number of extensions of $T$ to an operator on $\Fq^n$ with similarity invariants $\LL$. The following theorem is our main result.
\begin{theorem*}[Theorem~\ref{thm:eta} below]
We have
\[
\eta((\mu,\MM),\LL)=
\frac{1}{{n \brack \mu_1}_q}\frac{|\CC(\LL)|}{\pi(\mu,\MM)}
\langle A_{\LL\MM}(x;q),B_\mu(x;q)\rangle.
\]
\end{theorem*}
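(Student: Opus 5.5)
Since $\eta((\mu,\MM),\LL)$ depends only on the similarity class of the partial map $T$, we plan a double-counting argument over pairs. Let $\mathcal P(\mu,\MM)$ be the set of partial linear maps on $\Fq^n$ with similarity invariants $(\mu,\MM)$; by the classification (Theorem~\ref{thm:similarity} and the notation after \eqref{eq:sct}) its cardinality is $|\GL_n(\Fq)|$ divided by a stabilizer order. We take $\pi(\mu,\MM)$ to be this quantity, after the normalisation by the choice of domain subspace that produces the factor ${n\brack \mu_1}_q$ (the domain of $T$ has codimension $\mu_1$). Now count the pairs $(T,A)$ with $T\in\mathcal P(\mu,\MM)$, $A\in\CC(\LL)$, and $A$ extending $T$. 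Counting over $T$ first gives $|\mathcal P(\mu,\MM)|\,\eta((\mu,\MM),\LL)$. Counting over $A$ first gives $|\CC(\LL)|$ times the number $\xi_A$ of subspaces $W$ such that $A|_W$, viewed as a partial map $W\to\Fq^n$, has invariants $(\mu,\MM)$. Equating the two counts reduces the theorem to the identity
\[
\xi_A\cdot\frac{{n\brack\mu_1}_q\,\pi(\mu,\MM)}{|\mathcal P(\mu,\MM)|}=\langle A_{\LL\MM}(x;q),B_\mu(x;q)\rangle .
\]

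To compute $\xi_A$ we work with the $\Fq[t]$-module $M_A\cong\bigoplus_f M_f$ of type $\LL$. For a subspace $W$, the maximal invariant subspace $U\subseteq W$ of $A|_W$ is a submodule of $M_A$, and its type is $\MM$. So we stratify by submodules $U$ of type $\MM$; Hall polynomials count these. The cotype of such a $U$ is a torsion module $M_A/U$ of type $\LL/\MM$ (summed over cotypes, with Hall polynomial weights). The remaining data is a subspace $\bar W\subseteq M_A/U$ containing no nonzero invariant subspace, with defect dimensions $\mu$. Summing Hall polynomials $g^{\LL}_{\MM\,\nu}$ over cotypes $\nu$ is exactly what the skew modified Hall--Littlewood function $A_{\LL\MM}$ encodes, via the standard identity expressing skew Hall--Littlewood functions through Hall polynomials (Macdonald, Ch.~III).

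The key remaining step, and the main obstacle, is to show that for a module $N$ of type $\nu$, the number of subspaces $\bar W\subseteq N$ with no invariant part and defect sequence $\mu$ equals a suitable coefficient of $B_\mu$ paired against the Hall--Littlewood basis element indexed by $\nu$. The defect sequence $\mu$ records the dimensions of the successive subspaces $\bar W_k=\{w:\ w,Aw,\dots,A^{k-1}w\in \bar W\}$, or equivalently the codimensions of $\bar W\cap A^{-1}\bar W\cap\cdots$. I would try to count these by a recursion peeling off one step at a time: pass from $\bar W$ to $\bar W\cap A^{-1}\bar W$ inside $A$-stable flags. This should give a $q$-Whittaker-type generating function, since $q$-Whittaker functions arise as generating functions of flags of subspaces in $\Fq[t]$-modules in a given relative position (Macdonald's $P_\lambda(x;q,0)$ and its dual pairing with modified Hall--Littlewood functions). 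I would verify the recursion on $\mu$ of a single row first. Then I would use multiplicativity over the primary decomposition, pairing the $f$-components through degree substitution, to assemble $\langle A_{\LL\MM},B_\mu\rangle$. The remaining normalising constants, namely ${n\brack\mu_1}_q$ and $\pi(\mu,\MM)$, should match the stabilizer computation from the first step.
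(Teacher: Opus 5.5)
Your outer structure is the paper's. The pair count is Proposition~\ref{prop:eta}. Note that $|\mathcal P(\mu,\MM)|={n\brack \mu_1}_q\,\pi(\mu,\MM)$ by the very definition of $\pi$ in Proposition~\ref{prop:partialsize}. So your displayed identity is just $\xi_A=\resbrack{\LL}{\mu,\MM}=\langle A_{\LL\MM}(x;q),B_\mu(x;q)\rangle$, and no normalising constants are left to reconcile at the end. Your stratification by the maximal invariant subspace $U$ with weights $\prod_i g^{\lambda^i}_{\mu^i\nu^i}(q^{d_i})$, followed by passage to $W/U\subseteq M_A/U$, is exactly the proof of Theorem~\ref{thm:coeffcomputation}.

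The genuine gap is the step you yourself call the main obstacle. Let $\hat A$ be the induced operator on $M_A/U$, with invariants $\NN$. You must show that the number of subspaces on which $\hat A$ restricts to a simple map with defect dimensions $\mu$ equals $\langle F_\NN(x;q),B_\mu(x;q)\rangle$. All of the $q$-Whittaker content of the theorem sits in this step. An unspecified recursion peeling off $\bar W\cap A^{-1}\bar W$ inside invariant flags does not establish it.

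The paper obtains this step from Lemma~\ref{lem:restrictionformula}, imported from earlier work. It rests on two ideas missing from your plan.

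\begin{enumerate}
\item The count equals $\sigma(\mu,\hat A)$, the number of subspaces of $\hat A$-profile $\mu$ (Definition~\ref{def:subspace-profile}). One way to see this is duality. For the transpose $\hat A^*$ one has $(\hat A^{-1}\bar W)^\perp=\hat A^*(\bar W^\perp)$. Hence $(\bar W\cap\hat A^{-1}\bar W\cap\cdots\cap\hat A^{-(j-1)}\bar W)^\perp=\bar W^\perp+\hat A^*\bar W^\perp+\cdots+(\hat A^*)^{j-1}\bar W^\perp$. So the defect dimensions of $\hat A|_{\bar W}$ are the $\hat A^*$-profile of $\bar W^\perp$, and $\hat A^*$ is similar to $\hat A$.
\item The profile formula of Theorem~\ref{thm:cthulhu}. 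For $|\mu|=\dim(M_A/U)$ it reads $\sigma(\mu,\hat A)=(-1)^{\sum_{j\geq 2}\mu_j}q^{\sum_{j\geq 2}\binom{\mu_j}{2}}\langle F_\NN(x;q),\widetilde W_\mu(x;q)\rangle$. This is a substantial theorem in its own right.
\end{enumerate}

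Relatedly, your proposed multiplicativity over the primary decomposition cannot be applied to the subspaces $\bar W$ themselves. A non-invariant subspace does not split along primary components, and $\langle\prod_i p_{d_i}[\widetilde H_{\nu^i}],B_\mu\rangle$ is not a product of pairings. Multiplicativity enters only through invariant flags, via $F_\NN(x;t)=\prod_i p_{d_i}[\widetilde H_{\nu^i}(x;t)]$. It is precisely the profile formula that converts a count of arbitrary subspaces into a pairing against this invariant-flag data.
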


Here $\CC(\LL)$ is the conjugacy class indexed by $\LL$, and ${n \brack \mu_1}_q$ is a $q$-binomial coefficient. The factor $\pi(\mu,\MM)$ is an explicit normalization factor counting maps with similarity invariants $(\mu,\MM)$ (Proposition~\ref{prop:partialsize}). The symmetric functions in the Hall scalar product are as follows. If $\LL=\{(f_1,\lambda^1),\ldots,(f_r,\lambda^r)\}$ and $\MM=\{(f_1,\mu^1),\ldots,(f_r,\mu^r)\}$ with $d_i=\deg f_i$, allowing empty partitions as needed, then
\[
A_{\LL\MM}(x;t)=\prod_{i=1}^r p_{d_i}\circ \widetilde{H}_{\lambda^i/\mu^i}(x;t),
\]
where $\widetilde{H}_{\lambda/\mu}(x;t)$ is a skew modified Hall--Littlewood polynomial (Definition~\ref{def:skew-modified-hl}) and $\circ$ denotes plethystic substitution. The second factor is
\[
B_\mu(x;t)=(-1)^{\sum_{j\geq 2}\mu_j}t^{\sum_{j\geq 2}{\mu_j\choose 2}}\widetilde{W}_\mu(x;t),
\]
where $\widetilde{W}_\mu$ is the dual $q$-Whittaker function, defined in terms of the Hall--Littlewood $P$-function by $\widetilde{W}_\mu=\omega P_{\mu'}$. Here $\omega$ denotes the standard involution on symmetric functions. In matrix language, Theorem~\ref{thm:eta} gives the number of matrices in a conjugacy class whose first $k$ columns are prescribed (see Theorem~\ref{thm:eta-matrix} below); by replacing the coordinate subspace, the same formula applies to any fixed set of $k$ prescribed columns.

Several applications of Theorem \ref{thm:eta} stated above are developed in Section~\ref{sec:matrixcounting}. First, we count monic matrix polynomials of fixed degree and prescribed Smith normal form (Section~\ref{subsec:matrix-polynomials}). 
Second, by summing over conjugacy classes with a fixed characteristic polynomial, we obtain a symmetric-function formula for the number of extensions of a partial map with a prescribed characteristic polynomial. In the case where no columns are prescribed, this gives a new proof of the Gerstenhaber--Reiner formula for the number of matrices over $\Fq$ with a fixed characteristic polynomial.

The paper is organized as follows. Section~\ref{sec:preliminaries} recalls the similarity classification of linear maps defined on subspaces, formulates the basic extension-counting problem, and introduces the symmetric-function notation used throughout the paper. It culminates in the Hall-scalar-product formula of Theorem~\ref{thm:eta}, together with its interpretation as a prescribed-column matrix count in Theorem~\ref{thm:eta-matrix}. In Section~\ref{sec:representation-theory} we discuss connections to certain induced representations, and irreducible unipotent characters of $\GL_n(\Fq)$. Section~\ref{sec:matrixcounting} develops applications to matrix enumeration, including  monic matrix polynomials with prescribed Smith normal form, and matrices with prescribed columns and characteristic polynomial. The last application recovers the Gerstenhaber--Reiner formula for the number of matrices with a given characteristic polynomial as a special case. Section~\ref{sec:supports-conjugacy} turns from prescribed columns to more general coordinate supports. There we discuss connections with chromatic quasisymmetric functions and formulate polynomiality questions for $N(\CC;S;\varphi)$. Finally, Section~\ref{sec:conclusion} concludes with natural questions arising from this work.
\section{Preliminaries}\label{sec:preliminaries}
In this section we recall some enumerative results of Arora and Ram~\cite{MR4264825} on conjugacy classes of linear maps that are defined on subspaces of a finite vector space. Let $\Irr_q$ denote the set of all monic irreducible polynomials over $\Fq$ in the variable $x$. A partition of a nonnegative integer $n$ is a weakly decreasing sequence $\lambda=(\lambda_1,\lambda_2,\ldots)$ of nonnegative integers satisfying $|\lambda|:=\sum_{i\geq 1}\lambda_i=n$. As is customary, one omits trailing zeros while writing partitions; for example, $(4,2,2,1,0,0,\ldots)=(4,2,2,1)$. Denote by $\Par$ the set of all partitions of nonnegative integers.

Throughout this paper, $V$ denotes a vector space of dimension $n$ over $\Fq$ and $W$ denotes a subspace of $V$. Denote by $L(W,V)$ the space of all $\Fq$-linear transformations from $W$ to $V$.
\begin{definition}\label{def:subspace-similarity}
Two linear transformations $T$ and $\hat{T}$ defined on subspaces $W$ and $\hat{W}$ of $V$ are similar if there exists a linear isomorphism $S:V\to V$ such that $S(W)=\hat{W}$ and $S\circ T=\hat{T}\circ S_W$, where $S_W$ denotes the restriction of $S$ to $W.$ In other words, the following diagram commutes: 
$$
\begin{tikzcd}
W \arrow{r}{T} \arrow[swap]{d}{S_W} & V \arrow{d}{S}[swap]{\simeq} \\
\hat{W} \arrow{r}{\hat{T}} & V
\end{tikzcd}
$$
\end{definition}
Note that the above definition coincides with the usual notion of similarity in the case $W=V$.

We now give a complete set of invariants which characterize the similarity class of a map defined on a subspace (see Gohberg, Kaashoek and van~Schagen~\cite[Sec.~III.1]{Gohbergetal1995}). Given $T\in L(W,V)$, define a sequence of subspaces $W_i(i\geq 0)$ by setting $W_0=V, W_1=W$ and, for each $i\geq 1$, $W_{i+1}=W_i\cap T^{-1}(W_i)$, where $T^{-1}$ denotes the inverse image under $T.$ We therefore have a descending chain $W_0\supseteq W_1\supseteq\cdots$ of subspaces which eventually stabilizes. For each $i\geq 0$, write $d_i=\dim W_i$ and let $\ell=\min\{i:W_i=W_{i+1}\}$. It then follows that $W_\ell$ is the maximal $T$-invariant subspace. The linear operator $T'$ obtained by restricting $T$ to $W_\ell$ is called the operator part of $T$. Let $\mu_j=d_{j-1}-d_j$ for each $j\geq 1$ and write $\mu=\mu(T):=(\mu_1,\mu_2,\ldots)$. It is known that $\mu$ is an integer partition, called the sequence of \emph{defect dimensions} of $T$ (see Gohberg, Kaashoek and van~Schagen~\cite[pg.~52]{Gohbergetal1995}). The similarity class of $T$ is then uniquely determined by the integer partition $\mu$ and the similarity invariants of the operator $T'$ as the following theorem \cite[Thm.~III.1.4]{Gohbergetal1995} shows.  

\begin{theorem}\label{thm:similarity}
  Two linear maps $T$ and $\hat{T}$ defined on subspaces $W$ and $\hat{W}$ of $V$ are similar if and only if $\mu(T)=\mu(\hat{T})$ and their operator parts $T'$ and $\hat{T}'$ are similar.
\end{theorem}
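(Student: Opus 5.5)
Necessity is routine: if $S$ gives a similarity between $T$ and $\hat T$, then $S(W_i)=\hat W_i$ for every $i$. This follows by induction, since $S(W_i\cap T^{-1}W_i)=S(W_i)\cap \hat T^{-1}(S W_i)$ from $S\circ T=\hat T\circ S_W$. Hence the defect dimensions agree. Moreover $S$ maps $W_\ell$ onto $\hat W_\ell$ and intertwines $T'$ with $\hat T'$.

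For sufficiency, I will build a normal form for $T$ that depends only on $\mu$ and the similarity class of $T'$.

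\emph{Step 1: defect chains.} I claim one can choose vectors $v_1,\dots,v_{\mu_1}$ such that, for each $j$, the vectors $v_j,Tv_j,\dots,T^{k_j-1}v_j$ are defined, and $T^{k_j}v_j$ is either defined but lies outside $W$, or is not defined at all. Here $k_j$ is the largest $i$ with $\mu_i\ge j$, i.e.\ the $k_j$ are the parts of $\mu'$.

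To construct these chains, pick a complement $U_i$ of $W_{i}$ in $W_{i-1}$ for each $i\ge1$, compatibly. The map $T$ induces injective maps $W_{i}/W_{i+1}\to W_{i-1}/W_{i}$. Injectivity holds because $x\in W_i$ with $Tx\in W_i$ means $x\in W_{i+1}$. This injectivity is exactly what forces $\mu$ to be weakly decreasing. I lift bases of these quotients compatibly, starting from the deepest level $\ell$ and moving up. This produces elements $u$ of $W_{i-1}\setminus W_i$ whose iterates $T^{m}u$ are defined as long as they stay inside $W$.

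The union of these chain vectors together with $W_\ell$ gives a basis of $V$ modulo nothing: the counts are $\sum\mu_i=n-d_\ell$. More precisely, the vectors $T^m v_j$ form a basis of $V$ modulo $W_\ell$. The last vector in each chain, $T^{k_j}v_j$, is determined only up to its coset in $V$. At the end of each chain, $T$ applied to the final element of $W$ lands in $V$ arbitrarily.

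\emph{Step 2: normalizing the chain ends (the main obstacle).} I have to show that the images of the chain tails can be modified so that the resulting configuration is canonical. I expect this to be the hardest step. The images $T(T^{k_j-1}v_j)$ lie in $V$, but they may have components in $W_\ell$ and in other chains.

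The approach I will try first is to note that $V$ is spanned by $W_\ell$ together with all chain vectors. The vectors $T^{k_j}v_j$ are not constrained by any relation, since they lie in the top layer $W_0/W_1$ beyond the domain. Concretely, the top-layer chain vectors $v_j$ for $j\le\mu_1$ span a complement of $W$, and the elements $T^{k_j}v_j$ can themselves be taken as the starting vectors of chains. To arrange this, I will choose each chain to start at the top. I pick a basis of $V/W_1$, namely $\mu_1$ vectors. Then I use the injections above, run downward, to identify each $W_{i-1}/W_i$ with a subspace of $V/W_1$ via $T^{i-1}$. Choosing a basis of $V/W_1$ adapted to the resulting flag gives chains $w, T^{-1}$-preimages, and so on. Equivalently, I reverse direction: chains are $x_j\in W_{k_j-1}\setminus W_{k_j}$ with $T^m x_j$ defined for $m\le k_j$ and $T^{k_j}x_j\cdots$.

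I will verify that this yields a basis $\{T^m x_j\}$ of $V$ modulo $W_\ell$ in which $T$ acts as a shift. The only freedom is the $W_\ell$-components. These are removed by subtracting suitable vectors: since $T'$ acts on $W_\ell$, I replace $x_j$ by $x_j-w$ with $w\in W_\ell$, adjusting inductively along the layers.

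\emph{Step 3: conclusion.} Given the normal form, namely a shift on the chain part plus $T'$ on $W_\ell$, with the domain $W$ equal to the span of all non-top chain vectors together with $W_\ell$, two maps $T$ and $\hat T$ with equal $\mu$ and similar operator parts can be related as follows. Define $S$ to be a similarity $W_\ell\to\hat W_\ell$ intertwining $T'$ and $\hat T'$, and let $S$ send the chain basis of $T$ to the chain basis of $\hat T$. Then $S(W)=\hat W$, and $S\circ T=\hat T\circ S_W$ holds on each basis vector by construction.
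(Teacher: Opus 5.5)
The paper gives no proof of Theorem~\ref{thm:similarity}; it quotes it from Gohberg--Kaashoek--van~Schagen \cite[Thm.~III.1.4]{Gohbergetal1995}. Your route is correct in substance and self-contained: necessity by transporting the filtration $W_i$, and sufficiency by an explicit normal form, the operator part on $W_\ell$ plus shift chains. It also gives more than the citation:
\begin{itemize}
\item injectivity of the maps $W_i/W_{i+1}\to W_{i-1}/W_i$ induced by $T$ shows that $\mu$ is a partition, a fact the paper also takes from \cite{Gohbergetal1995};
\item the construction yields exactly $\mu_i-\mu_{i+1}$ chains with $i$ vectors;
\item when $W_\ell=0$ it shows that every such map is similar to the chain model of Proposition~\ref{prop:defchar}.
\end{itemize}
Conceptually this is the Kronecker form of the pencil $\lambda\iota-T$, where $\iota\colon W\to V$ is the inclusion. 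Injectivity of $\iota$ excludes infinite elementary divisors and the $\epsilon\times(\epsilon+1)$ minimal-index blocks. What remains are the $(\epsilon+1)\times\epsilon$ blocks (your chains) and the finite elementary divisors (the operator part).

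You have, however, misplaced the difficulty. Step~2 addresses an obstacle that does not exist, and as written it is muddled. Carry out Step~1 literally bottom-up. Put $B_{\ell+1}=\emptyset$, and for $i=\ell,\ldots,1$ let $B_i=T(B_{i+1})\cup N_i\subseteq W_{i-1}$. Here the classes of $T(B_{i+1})$ in $W_{i-1}/W_i$ are independent by injectivity, and $N_i$ completes them to a basis. Then:
\begin{itemize}
\item every chain vector is an actual iterate $T^mu$ with $u\in N_i$;
\item the chain ends $T^{i-1}u\in B_1$ are themselves basis vectors;
\item $\bigcup_iB_i$ together with a basis of $W_\ell$ is a basis of $V$;
\item $W=W_\ell\oplus\operatorname{span}\bigcup_{i\ge2}B_i$, and $T(B_i)\subseteq B_{i-1}$ for $i\ge2$.
\end{itemize}
Thus $C=\operatorname{span}\bigcup_iB_i$ is a complement of $W_\ell$ with $T(W\cap C)\subseteq C$. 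There are no components in $W_\ell$ or in other chains to remove, and Step~3 applies at once.

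The top-down variant you float in Step~2 is where a real problem would appear. Take a top vector $y$ whose class in $V/W$ lies in the image of $W_{i-1}/W_i$ under the map induced by $T^{i-1}$. It only satisfies $y=T^{i-1}x+w$ with $w\in W$, not $w\in W_\ell$, so $y$ need not lie in $T(W)$ at all. Subtracting elements of $W_\ell$ cannot repair this; you must replace $y$ by $T^{i-1}x$, which is the bottom-up construction again.

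Also fix the indexing in Step~1. With $k_j=\mu'_j$, the chain is $v_j,\ldots,T^{k_j-1}v_j$, only its last vector lies outside $W$, and $T^{k_j}v_j$ is never defined.
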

\begin{remark}
 The integers $d_j$ above are given by $d_j=\dim(W\cap T^{-1}W\cap \cdots \cap T^{-(j-1)}W)$ for each $j\geq 1$.
\end{remark}

Similarity of linear operators on $\Fq^n$ is well-studied and can naturally be identified with similarity of square matrices.  It is known (Stanley~\cite[Sec.~1.10.1]{Stanley2012}) that conjugacy classes of $n\times n$ matrices over $\Fq$ are indexed by functions $\LL:\Irr_q\to \Par$ which satisfy  
\begin{equation}\label{eq:sct}
 \lVert \LL \rVert:= \sum_{f \in \Irr_q} |\LL(f)| \deg f=n.
\end{equation}

The sum above is finite as $\LL$ necessarily maps all but finitely many elements of $\Irr_q$ to the empty partition. The conjugacy class indexed by $\LL$ is denoted $\CC(\LL)$. 

\begin{remark}
The conjugacy classes can be described by the structure theorem for finitely generated modules over a principal ideal domain. Each linear operator $T$ on $\Fq^n$ defines an $\Fq[x]$-module structure on $\Fq^n$ where the action of $x$ on a vector is that of $T$. This module is isomorphic to a direct sum
\begin{align*}
  \bigoplus_{i=1}^r\bigoplus_{j\geq 1}\frac{\Fq[x]}{(f_i^{\lambda_{i,j}})},
\end{align*}
where the polynomials $f_i\in \Irr_q$ are distinct and the sequence $\lambda^i=(\lambda_{i,1}, \lambda_{i,2}, \ldots)$ is an integer partition for each $1\leq i\leq r.$ The function indexing the conjugacy class of $T$ is simply $\LL=\{(f_i,\lambda^i)\}_{1\leq i\leq r}$.
\end{remark}

If $\{f_1,\ldots,f_r\}$ is a finite subset of $\Irr_q$ such that $\LL(g)$ is the empty partition for every polynomial $g\in\Irr_q\setminus\{f_1,\ldots,f_r\}$ we also write $\LL=\{(f_1,\lambda^1),\ldots,(f_r,\lambda^r)\}$ where $\lambda^i=\LL(f_i)$ for $1\leq i\leq r$. It will occasionally be convenient to allow some of the partitions $\lambda^i$ above to be empty. By a slight abuse of notation, we write $\emptyset$ for the function indexing the conjugacy class of the unique operator on a zero dimensional space  (this seems natural since it corresponds to the function which assigns the empty partition to each irreducible polynomial).

Philip Hall's explicit formula, as stated by Stanley~\cite[Eqn.~1.107]{Stanley2012}, for the size of the conjugacy class indexed by $\LL=\{(f_1,\lambda^1),\ldots,(f_r,\lambda^r)\}$ is
\begin{align*}
  |\CC(\LL)|=\frac{|\GL_n(\Fq)|}{\prod_{i=1}^r c_q(d_i,\lambda^i)},
\end{align*}
where $d_i=\deg f_i$ for $1\leq i\leq r$ and
\begin{equation}\label{eq:centralizer}
  c_q(d,\lambda):=q^{d\sum_{i\geq 1}(\lambda'_i)^2}\prod_{i\geq 1}(q^{-d};q^{-d})_{m_i(\lambda)}.
\end{equation}
Here $\lambda'$ denotes the partition conjugate to $\lambda$ and $(q;q)_k:=\prod_{j=1}^k (1-q^j)$ is a $q$-Pochhammer symbol while $m_i(\lambda)$ denotes the multiplicity of the integer $i$ as a part of $\lambda$. The quantity $c(\LL):=\prod_{i=1}^r c_q(d_i,\lambda^i)$ represents the cardinality of the $\GL_n(\Fq)$-centralizer corresponding to the class $\LL$.

In view of Theorem \ref{thm:similarity} each similarity class of linear maps in $L(W,V)$ is indexed by a pair $(\lambda,\LL)$, where $\lambda$ is an integer partition with $\lambda_1=\dim V-\dim W$ and $\LL$ indexes a conjugacy class of operators satisfying $|\lambda|+\lVert \LL \rVert=n$. When $\lVert \LL \rVert=\dim V,$ the conjugacy class $\CC(\LL)$ has similarity invariants $(\emptyset,\LL)$, where $\emptyset$ denotes the empty partition of 0.

\begin{example}
  Let $V=\Fq^6$ and let $e_i(1\leq i\leq 6)$ denote the standard basis vectors of $V.$ Consider the linear map $T$ defined on the subspace $W={\rm span}\{e_1,e_2,e_3,e_4\}$  by
  \begin{align*}
    e_1\xrightarrow{T} e_2 \xrightarrow{T}e_5 \qquad  e_3\xrightarrow{T} e_4 \xrightarrow{T} 0.
  \end{align*}
  Then $\dim V=6,\dim W=4$. We have
  \begin{align*}
    \dim(W\cap T^{-1}W)&=\dim{\rm span}\{e_1,e_3,e_4\}=3,\\
     \dim(W\cap T^{-1}W\cap T^{-2}W)&=\dim{\rm span}\{e_3,e_4\}=2.
  \end{align*}
 Since ${\rm span}\{e_3,e_4\}$ is $T$-invariant, the defect dimensions of $T$ are given by $\lambda=(6-4,4-3,3-2)=(2,1,1)$. The operator part of $T$ is nonzero and  nilpotent; therefore it has similarity invariants $\LL=\{(x,(2))\}$.
\end{example}

\begin{definition}\label{def:restriction-count}
For a map $T\in L(W,V)$ with similarity invariants $(\lambda,\LL)$, let
$$
\resbrack{\lambda,\LL}{\mu,\MM}
$$
denote the number of subspaces $U \subseteq W$ such that the restriction $T_U:U \to V$ has similarity invariants $(\mu,\MM)$. 
\end{definition}
If either $\lambda$ or $\mu$ is the empty partition we omit it from the notation. For example, we write $\resbrack{\LL}{\mu,\MM}$ for $\resbrack{\emptyset,\LL}{\mu,\MM}$. We are interested in the following problem.
\begin{problem}\label{prob:coeff}
Given $(\lambda, \LL)$ and $(\mu,\MM)$, find an explicit formula for
  $$
\resbrack{\lambda,\LL}{\mu,\MM}.
$$
\end{problem}
The restriction relation also defines a partial order on similarity-invariant pairs.
\begin{problem}\label{prob:restriction-poset}
Declare $(\mu,\MM) \preceq (\lambda,\LL)$ if $(\mu,{\mathrm M})$ occurs as the similarity invariants of the restriction of some map with similarity invariants $(\lambda,{\mathrm L})$. Describe the combinatorial properties of this poset.
\end{problem}
Finding an explicit answer to Problem \ref{prob:coeff} even in the case $\LL=\MM=\emptyset$ appears to be open. Theorem \ref{thm:coeffcomputation} gives an explicit formula involving symmetric functions to Problem \ref{prob:coeff} in the case where $\lambda$ is the empty partition. 
\begin{example}\label{eg:identity}
The identity operator on $\Fq^n$ lies in the conjugacy class corresponding to $\LL=\{(x-1,(1^n))\}$. If $\MM=\{(x-1,(1^k))\}$, then we have
\begin{align*}
  \resbrack{\LL}{(n-k),\MM}={n \brack k}_q,
\end{align*}
a $q$-binomial coefficient, defined as the number of $k$-dimensional subspaces of $\Fq^n$.
\end{example}
\begin{definition}\label{def:simple-map}
  A linear map $T$ defined on a subspace of $V$ is said to be \emph{simple} if, for each $T$-invariant subspace $U$, either $U=\{0\}$ or $U=V$.
\end{definition}

\begin{example}
  The linear map $T$ defined on the subspace $W={\rm span}\{e_1,e_2,e_3\}$ of $\Fq^6$  by
  \begin{align*}
    e_1\xrightarrow{T} e_2 \xrightarrow{T}e_4 \\ e_3\xrightarrow{T} e_5 
  \end{align*}
is simple with defect dimensions $\lambda=(3,2,1)$ and has similarity invariants $(\lambda,\emptyset)$.
\end{example} 

\begin{remark}
 If $W$ is a proper subspace of $V$, then a map $T\in L(W,V)$ is simple precisely when $T$ has similarity invariants $(\lambda,\emptyset)$ for some partition $\lambda$ of $\dim V.$ On the other hand, an operator on $V$ is simple precisely when it has an irreducible characteristic polynomial.
\end{remark}

\begin{lemma}
 Given a linear map $T:W\to \Fq^n$, the number of extensions of $T$ to all of $\Fq^n$ with similarity invariants $\LL$ depends solely on the similarity invariants of $T$. 
\end{lemma}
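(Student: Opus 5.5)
The plan is a transport-of-structure argument. Suppose $T:W\to\Fq^n$ and $\hat T:\hat W\to\Fq^n$ have the same similarity invariants $(\mu,\MM)$. By Theorem~\ref{thm:similarity} they are similar in the sense of Definition~\ref{def:subspace-similarity}. So there is an isomorphism $S:V\to V$ with $S(W)=\hat W$ and $S\circ T=\hat T\circ S_W$.

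Let $E(T,\LL)$ be the set of operators $A$ on $V$ with $A|_W=T$ and $A\in\CC(\LL)$. Define $\Phi(A)=SAS^{-1}$; the goal is to show that $\Phi$ is a bijection $E(T,\LL)\to E(\hat T,\LL)$.

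\textbf{Similarity class is preserved.} Since $SAS^{-1}$ is conjugate to $A$, it lies in $\CC(\LL)$.

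\textbf{The restriction is correct.} For $\hat w\in\hat W$, write $\hat w=Sw$ with $w\in W$, which is possible because $S(W)=\hat W$. Then
\[
SAS^{-1}\hat w=SAw=STw=\hat T S w=\hat T\hat w,
\]
so $SAS^{-1}$ extends $\hat T$.

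\textbf{Bijectivity.} $S^{-1}$ satisfies the symmetric relations $S^{-1}(\hat W)=W$ and $S^{-1}\circ\hat T=T\circ S^{-1}_{\hat W}$. Hence the map $B\mapsto S^{-1}BS$ sends $E(\hat T,\LL)$ into $E(T,\LL)$ by the same two checks, and it is inverse to $\Phi$.

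Therefore $|E(T,\LL)|=|E(\hat T,\LL)|$, so the count depends only on $(\mu,\MM)$. No step is a real obstacle: all the content lies in Theorem~\ref{thm:similarity}, which converts equality of invariants into the existence of $S$. The remaining work is just to verify that conjugation by $S$ respects both the extension condition and the conjugacy class.
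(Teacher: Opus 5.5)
Your proposal is correct and follows the paper's own proof: Theorem~\ref{thm:similarity} supplies an isomorphism intertwining the two partial maps, conjugation by it carries extensions of one map to extensions of the other within $\CC(\LL)$, and conjugation by its inverse gives the inverse bijection. Your explicit check that $SAS^{-1}$ restricts to $\hat T$ on $\hat W$ simply spells out the step the paper states in one line.
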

\begin{proof}
  Suppose $T_1:W_1\to \Fq^n$ and $T_2:W_2\to \Fq^n$ have the same similarity invariants. By Theorem~\ref{thm:similarity}, the maps $T_1$ and $T_2$ are similar. Hence, by Definition~\ref{def:subspace-similarity}, there exists $g\in \GL_n(\Fq)$ such that $g(W_1)=W_2$ and
  \[
    T_2g=gT_1 \quad \text{on } W_1.
  \]
  If $A$ is an extension of $T_1$ to an operator on $\Fq^n$, then $gAg^{-1}$ is an extension of $T_2$. Conversely, conjugating by $g^{-1}$ sends extensions of $T_2$ to extensions of $T_1$. These two operations are inverse bijections, and conjugation preserves similarity invariants. Hence the number of extensions with prescribed similarity invariants $\LL$ is the same for $T_1$ and $T_2$.
\end{proof}

The following result of Arora and Ram~\cite[Cor.~4.7]{MR4264825} gives a formula for the number of maps which have specified similarity invariants.
\begin{proposition}\label{prop:partialsize}
For each subspace $W\subseteq V$, the number of maps $T\in L(W,V)$ which have similarity invariants $(\lambda,\LL)$ is given by 
  \begin{align}\label{eq:pidef}
    \pi(\lambda,\LL)=q^{d(k-d)}{k \brack d}_q |\CC(\LL)|\; \pi(\lambda,\emptyset),
  \end{align}
 where $d=\lVert \LL \rVert$ and  $k=\dim W$. Here
  \begin{align*}
\pi(\lambda,\emptyset)=\gamma_q(k-d)\; q^{\sum_{j\geq 2}\lambda_j^2}\; \prod_{i\geq 1}{\lambda_i \brack \lambda_{i+1}}_q,
  \end{align*}
  where $\gamma_q(r):=|\GL_r(\Fq)|=\prod_{i=0}^{r-1}(q^r-q^i)$.
\end{proposition}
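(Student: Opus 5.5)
The plan is to strip off the operator part first, which reduces the count to simple maps, and then to count simple maps by a recursion along the chain $W_0\supseteq W_1\supseteq W_2\supseteq\cdots$.

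\textbf{Reduction to simple maps.} Fix $W$ with $\dim W=k$, and let $d=\lVert\LL\rVert$. A map $T\in L(W,V)$ with invariants $(\lambda,\LL)$ determines its maximal invariant subspace $U=W_\ell\subseteq W$, of dimension $d$, together with the operator $T|_U$ in class $\CC(\LL)$ on $U$. It also determines the induced map $\bar T\colon W/U\to V/U$.

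The key claim is that $U$ is the maximal $T$-invariant subspace and $T$ has defect sequence $\lambda$ if and only if $\bar T$ is simple with defect sequence $\lambda$. To see this, let $\bar W_i$ be the chain for $\bar T$; by induction on $i$, each $W_i$ contains $U$ and $W_i/U=\bar W_i$.
\begin{itemize}
\item Since $\dim W_i-\dim\bar W_i=d$ for every $i$, the successive differences are unchanged, so the defect sequences of $T$ and $\bar T$ agree.
\item Preimages in $W$ of $\bar T$-invariant subspaces are $T$-invariant and contain $U$. Hence $U$ is maximal exactly when $\bar T$ has no nonzero invariant subspace.
\end{itemize}

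Conversely, the data can be chosen freely and independently.
\begin{itemize}
\item Choose $U\subseteq W$ of dimension $d$: there are ${k\brack d}_q$ ways.
\item Choose an operator on $U$ in $\CC(\LL)$: there are $|\CC(\LL)|$ ways, with the class size taken in $\GL_d$.
\item Choose a simple $\bar T$ on the $(k-d)$-dimensional subspace $W/U$ of the $(n-d)$-dimensional space $V/U$, with defects $\lambda$.
\item Lift $\bar T$: on each of $k-d$ complement basis vectors the image is fixed modulo $U$, giving $q^{d(k-d)}$ lifts.
\end{itemize}
The choices are independent, so this gives the factorization in \eqref{eq:pidef}. It remains to show that the number of simple maps on a $k'$-dimensional subspace $W$ with defects $\lambda$ (so $\lambda_1=n'-k'$, $|\lambda|=n'$, $k'=|\lambda|-\lambda_1$) is $\gamma_q(k')\,q^{\sum_{j\ge2}\lambda_j^2}\prod_i{\lambda_i\brack\lambda_{i+1}}_q$.

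\textbf{Recursion for simple maps.} I would proceed by induction on the length of $\lambda$; the base case $k'=0$ gives $1$. Set $W_2=W\cap T^{-1}W$, which has dimension $k'-\lambda_2$. Then $T|_{W_2}\colon W_2\to W$ is a map on a subspace of the $k'$-dimensional space $W$. Its chain is $W_2\supseteq W_3\supseteq\cdots$, so it has defects $\hat\lambda=(\lambda_2,\lambda_3,\ldots)$, and it is simple because any invariant subspace of it is $T$-invariant. The count then splits into three factors.
\begin{itemize}
\item Choose $W_2$: there are ${k'\brack \lambda_2}_q$ ways.
\item Choose $T|_{W_2}$: by induction there are $\pi(\hat\lambda,\emptyset)$ ways, computed with ambient space $W$.
\item Choose the images of a basis of a complement of $W_2$ in $W$. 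These $\lambda_2$ images are arbitrary in $V$ subject to one condition: their classes in $V/W$ (of dimension $\lambda_1$) must be linearly independent, which forces $W\cap T^{-1}W$ to equal $W_2$ exactly. This gives $q^{k'\lambda_2}\prod_{i=0}^{\lambda_2-1}(q^{\lambda_1}-q^i)$ ways.
\end{itemize}
Simplicity of $T$ is automatic: an invariant subspace lies in every $W_i$, hence in the maximal invariant subspace of $T|_{W_2}$, which is $0$.

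\textbf{Main obstacle.} The hardest part is the bookkeeping: verifying that the product of these factors telescopes to the closed form. Writing $k'=\lambda_2+\lambda_3+\cdots$ and $\gamma_q(k')={k'\brack\lambda_2}_q\,\gamma_q(\lambda_2)\,\gamma_q(k'-\lambda_2)\,q^{\lambda_2(k'-\lambda_2)}$, one must match the following pieces:
\begin{itemize}
\item the new factor $\prod_i(q^{\lambda_1}-q^i)/\gamma_q(\lambda_2)=q^{\binom{\lambda_2}{2}}{\lambda_1\brack\lambda_2}_q\cdot(\text{normalization})$;
\item the powers of $q$, namely $q^{k'\lambda_2}$ against $q^{\lambda_2^2}$ together with the shift in exponents coming from the induction hypothesis.
\end{itemize}
I would first check this on $\lambda=(a,b)$ and $\lambda=(a,b,c)$, and then carry out the general exponent comparison.
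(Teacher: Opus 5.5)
Your argument is correct. The paper itself does not prove this proposition; it quotes it from Arora and Ram \cite[Cor.~4.7]{MR4264825}. So there is no in-paper argument to compare against, and what you give is a complete, self-contained derivation.

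Your two layers also explain the shape of \eqref{eq:pidef}. First, quotienting by the maximal invariant subspace $U=W_\ell$ preserves the whole chain ($W_i/U=\bar W_i$). This splits a map with invariants $(\lambda,\LL)$ into a choice of $U$ (${k\brack d}_q$ ways), an operator in $\CC(\LL)$, a simple map $\bar T$ with defects $\lambda$, and $q^{d(k-d)}$ lifts. Second, peeling off $W_2=W\cap T^{-1}W$ turns simple maps with defects $\lambda$ into simple maps with defects $(\lambda_2,\lambda_3,\ldots)$ inside $W$, times injective choices into $V/W$. Both correspondences are bijections only once you fix one complement for each subspace ($U$ in $W$, resp.\ $W_2$ in $W$), so you should say that explicitly.

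The step you single out as the main obstacle is a one-line computation, and no small-case checks are needed. The quotient $\prod_{i=0}^{\lambda_2-1}(q^{\lambda_1}-q^i)/\gamma_q(\lambda_2)$ is exactly ${\lambda_1\brack\lambda_2}_q$. The powers $q^{\binom{\lambda_2}{2}}$ in numerator and denominator cancel, so there is no extra normalization factor. Write $k'=\lambda_2+\lambda_3+\cdots$ and combine the inductive hypothesis with your identity $\gamma_q(k')={k'\brack\lambda_2}_q\gamma_q(\lambda_2)\gamma_q(k'-\lambda_2)q^{\lambda_2(k'-\lambda_2)}$. Your recursion then gives
\begin{align*}
{k'\brack\lambda_2}_q\,q^{k'\lambda_2}\gamma_q(\lambda_2){\lambda_1\brack\lambda_2}_q\cdot\gamma_q(k'-\lambda_2)\,q^{\sum_{j\geq3}\lambda_j^2}\prod_{i\geq2}{\lambda_i\brack\lambda_{i+1}}_q
&=\gamma_q(k')\,q^{k'\lambda_2-\lambda_2(k'-\lambda_2)}\,q^{\sum_{j\geq3}\lambda_j^2}\prod_{i\geq1}{\lambda_i\brack\lambda_{i+1}}_q\\
&=\gamma_q(k')\,q^{\sum_{j\geq2}\lambda_j^2}\prod_{i\geq1}{\lambda_i\brack\lambda_{i+1}}_q .
\end{align*}
This is the claimed $\pi(\lambda,\emptyset)$, since for the simple quotient map $\bar T$ the domain dimension is $k'=k-d$. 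The only $q$-power bookkeeping is $q^{k'\lambda_2}\cdot q^{-\lambda_2(k'-\lambda_2)}=q^{\lambda_2^2}$.
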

\begin{remark}
 If $\lambda$ has a single part equal to $\dim V-\dim W$, then $W$ is $T$-invariant and $k=d$. In this case Proposition \ref{prop:partialsize} reduces to $\pi(\lambda,\LL)=|\CC(\LL)|$.  
\end{remark}

\begin{definition}\label{def:extension-count}
Let $T:W\to \Fq^n$ be a linear map with similarity invariants $(\mu,\MM)$. Define $\eta((\mu,\MM),\LL)$ to be the number of extensions of $T$ to all of  $\Fq^n$ which have similarity invariants $\LL$.
\end{definition}
The quantity  $\eta((\mu,\MM),\LL)$ will prove useful in counting various classes of matrices over $\Fq$.
\begin{proposition}\label{prop:eta}
  If $\lVert \LL \rVert=n$, then
  \begin{align*}
    \eta((\mu,\MM),\LL)=\frac{1}{{n \brack \mu_1}_q}\frac{|\CC(\LL)|}{\pi(\mu,\MM)}\,\resbrack{\LL}{\mu,\MM}.
  \end{align*}
\end{proposition}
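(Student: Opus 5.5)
We prove the identity by double counting the set of pairs
\[
\mathcal P=\{(T,A)\;:\; T\in L(W,\Fq^n)\text{ for some subspace } W \text{ with } \dim W=n-\mu_1,\ A\in \CC(\LL),\ A|_W=T,\ T \text{ has invariants } (\mu,\MM)\}.
\]
Note that $\dim W$ is forced by the invariants: since $\mu_1=\dim V-\dim W$, every $W$ that occurs has dimension $k:=n-\mu_1$.

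\emph{Counting by first choosing $T$.} By Proposition~\ref{prop:partialsize}, for each fixed subspace $W$ of dimension $k$ the number of $T\in L(W,V)$ with invariants $(\mu,\MM)$ is $\pi(\mu,\MM)$. There are ${n\brack k}_q={n\brack \mu_1}_q$ such subspaces. By the preceding lemma, each such $T$ has exactly $\eta((\mu,\MM),\LL)$ extensions lying in $\CC(\LL)$; these extensions are precisely the operators with similarity invariants $\LL$, since $\lVert\LL\rVert=n$. Hence
\[
|\mathcal P|={n\brack \mu_1}_q\,\pi(\mu,\MM)\,\eta((\mu,\MM),\LL).
\]

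\emph{Counting by first choosing $A$.} A pair $(T,A)$ is determined by $A$ together with the subspace $W$, because $T=A|_W$. For fixed $A\in\CC(\LL)$, the admissible $W$ are exactly the subspaces $U\subseteq V$ for which $A_U$ has invariants $(\mu,\MM)$. By Definition~\ref{def:restriction-count}, applied with $W=V$ and the operator $A$ of invariants $(\emptyset,\LL)$, there are $\resbrack{\LL}{\mu,\MM}$ of them. This count is independent of the choice of $A$ in the class, since conjugation by $g\in\GL_n(\Fq)$ transports subspaces and preserves the invariants of restrictions. Hence
\[
|\mathcal P|=|\CC(\LL)|\resbrack{\LL}{\mu,\MM}.
\]

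Equating the two counts and dividing by ${n\brack \mu_1}_q\,\pi(\mu,\MM)$ gives the formula. The division is legitimate only when $\pi(\mu,\MM)\neq 0$, i.e.\ when maps with invariants $(\mu,\MM)$ actually exist; this is implicit in the statement, since $\eta$ is defined via such a $T$.

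The main point to verify is the well-definedness used in both counts. The first uses the lemma that $\eta$ depends only on the invariants of $T$. The second uses that the restriction count is constant on the conjugacy class. Beyond these, one only needs the identification of restrictions of $A$ to $U$ with partial maps $U\to V$, which is immediate from the definitions.
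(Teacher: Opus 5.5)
Your proof is correct and is essentially the paper's argument. The paper double counts pairs $(W,T)$ with $T\in\CC(\LL)$ and $T_W$ of invariants $(\mu,\MM)$, which are in bijection with your pairs $(T,A)$ via $T=A|_W$. Your added checks — that the restriction count does not depend on the choice of representative in $\CC(\LL)$, and that one needs $\pi(\mu,\MM)\neq 0$ — make explicit points the paper leaves implicit.
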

\begin{proof}
  Count pairs $(W,T)$ where $W$ is a $k$-dimensional subspace of $\Fq^n$ and $T$ is a linear operator on $\Fq^n$ which lies in $\CC(\LL)$ with the property that the restriction $T_W$ of $T$ to $W$ has similarity invariants $(\mu,\MM)$ (with $\mu_1=n-k$). For a fixed $T\in \CC(\LL)$, the number of choices for $W$ is clearly $\resbrack{\LL}{\mu,\MM}$. Therefore the total number of such pairs is given by
  \begin{align*}
    \sum_{T\in \CC(\LL)} \resbrack{\LL}{\mu,\MM}=|\CC(\LL)|\,\resbrack{\LL}{\mu,\MM}.
  \end{align*}
  On the other hand, if $W$ is fixed, first choose a linear map $T':W\to \Fq^n$ with similarity invariants  $(\mu,\MM)$ and then extend $T'$ to all of $\Fq^n$ in $\eta((\mu,\MM),\LL)$ ways. Since there are ${n \brack k}_q$ subspaces $W$ of dimension $k$, it follows that the desired number of pairs $(W,T)$ is given by
  \begin{align*}
    {n \brack k}_q \pi(\mu,\MM)\; \eta((\mu,\MM),\LL),
  \end{align*}
 and the proposition follows.
\end{proof}
Note that for $\mu=(n)$ and $\MM=\emptyset$ the formula of Proposition \ref{prop:eta} reduces to $\eta((\mu,\MM),\LL)=|\CC(\LL)|$ as expected. In view of Proposition \ref{prop:eta}, it is desirable to have an explicit formula for $\resbrack{\LL}{\mu,\MM}$. Our immediate goal is to derive a formula involving symmetric functions for the coefficient $\resbrack{\LL}{\mu,\emptyset}$ when $\lVert \LL \rVert=|\mu|.$
\subsection{Symmetric Functions}\label{subsec:symmetric-functions}
We begin with a brief overview of symmetric functions; the reader is referred to Macdonald \cite{MR1354144} for the details. Denote by $\QQ(t)$ the field of rational functions in an indeterminate $t$. Write $\Lambda_t$ for the algebra of symmetric functions in infinitely many variables $x=(x_1,x_2,\ldots)$ with coefficients in $\QQ(t)$. 
The algebra $\Lambda_t$ admits several natural bases indexed by integer partitions such as the monomial ($m_\lambda$), complete homogeneous ($h_\lambda$), elementary ($e_\lambda$), power sum ($p_\lambda$) and Schur ($s_\lambda$) symmetric functions. There exists an involutory automorphism $\omega$ of $\Lambda_t$ satisfying $\omega e_\lambda=h_\lambda$. The algebra $\Lambda_t$ is also equipped with the Hall scalar product $\langle  \cdot,\cdot\rangle$, a symmetric bilinear form characterized by the property $\langle  m_\lambda,h_\mu \rangle =\delta_{\lambda\mu}$, where $\delta$ denotes the Kronecker delta function. Two bases $b_\lambda$ and $\tilde{b}_\lambda$ indexed by integer partitions are dual with respect to the Hall scalar product if $\langle b_\lambda,\tilde{b}_\mu \rangle=\delta_{\lambda\mu}$.

The Hall--Littlewood $P_\lambda(x;t)$ and $q$-Whittaker functions $W_\lambda(x;t)$ are two further parametric bases for $\Lambda_t$. Both these bases occur as specializations of the two-parameter Macdonald polynomials $P_\lambda(x;q,t)$; the Hall--Littlewood functions are the $q=0$ specialization while the $q$-Whittaker functions are obtained by setting $t=0$ and $q=t$. The basis dual to $P_\lambda(x;t)$ is $H_\lambda(x;t)$, the transformed Hall--Littlewood polynomials, given by
  \begin{align*}
      H_\lambda(x;t)=\left(\prod_{i\geq 1}(t;t)_{m_i(\lambda)}\right) P_\lambda\left[\frac{X}{1-t}\right],
    \end{align*}
    where $f[g]$ (also written $f\circ g$) indicates plethystic substitution of $g$ into $f$ (see Macdonald \cite[p.\ 135]{MR1354144} for the definition of plethysm). The function $H_\lambda(x;t)$ also satisfies the relation $W_\lambda(x;t)=\omega H_{\lambda'}(x;t)$. The modified Hall--Littlewood polynomials $\widetilde{H}_\lambda$ are defined by
    \begin{align*}
      \widetilde{H}_\lambda(x;t)=t^{n(\lambda)}H_\lambda(x;t^{-1}),
    \end{align*}
    where $n(\lambda)=\sum_{i\geq 1}(i-1)\lambda_i$. The modified Hall--Littlewood polynomials are more combinatorial in nature, being the Frobenius image of a graded vector space. The basis dual to $W_\lambda(x;t)$ with respect to the Hall scalar product is denoted $\widetilde{W}_\lambda(x;t)$ and satisfies $\widetilde{W}_\lambda(x;t)=\omega P_{\lambda'}(x;t)$.

The following notion of class type goes back to Green~\cite{MR72878}. 
\begin{definition}\label{def:class-type}
If $\LL=\{(f_1,\lambda^1),\ldots,(f_r,\lambda^r)\}$, then the \emph{class type} of the conjugacy class $\CC(\LL)$ is the multiset $\tau=\{(d_1,\lambda^1),\ldots,(d_r,\lambda^r)\}$, where $d_i=\deg f_i$. The size of $\tau$ is
\[
\lVert \tau\rVert:=\sum_{i=1}^r d_i|\lambda^i|=\lVert \LL\rVert.
\]
For a class type $\tau=\{(d_1,\lambda^1),\ldots,(d_r,\lambda^r)\}$, define
\[
F_\tau(x;t):=\prod_{i=1}^r p_{d_i}[\widetilde{H}_{\lambda^i}(x;t)],
\]
where the square brackets denote plethystic substitution. After specializing $t=q$, one has $p_d[\widetilde{H}_\lambda(x;t)]|_{t=q}=p_d[\widetilde{H}_\lambda(x;q^d)]$. If $\tau$ is the class type of $\LL$, we also write $F_\LL(x;t):=F_\tau(x;t)$.
\end{definition}

\begin{definition}\label{def:invflag}
Given a linear operator $T$ on $\Fq^n$ and a partition $\nu=(\nu_1,\nu_2,\ldots,\nu_\ell)$ of $n$, let $E_\nu(T)$ denote the number of flags
$$
(0)=V_0\subset V_1\subset \cdots \subset V_\ell=\Fq^n
$$
of $T$-invariant subspaces $V_i$ such that $\dim(V_i/V_{i-1})=\nu_i$ for $1\leq i\leq \ell$. The invariant flag generating function of $T$ is the symmetric function given in the monomial basis by
\begin{align*}
F_T(x)=\sum_{\nu\vdash n}E_\nu(T)m_\nu.
\end{align*}
\end{definition}

Suppose $T$ has similarity invariants $\LL=\{(f_1,\lambda^1),\ldots,(f_r,\lambda^r)\}$, and let $\tau$ be the class type of $\LL$. The invariant flag generating function depends only on $\tau$. More precisely, by~\cite[Prop.~2.11]{ram2023subspace},
\begin{align}\label{eq:class-type-flag}
F_T(x)=F_\tau(x;q)=F_\LL(x;q).
\end{align}
Thus $F_\tau(x;t)$, equivalently $F_\LL(x;t)$, is the universal class-type version of the invariant flag generating function, and the concrete invariant flag generating function of a matrix over $\Fq$ is obtained by specializing $t=q$.

\begin{example}\label{eg:invflag-examples}
We have the following specializations of Equation~\eqref{eq:class-type-flag}.
\begin{enumerate}
\item If $T$ is primary of type $(f,\lambda)$, where $d=\deg f$, then
\begin{align*}
F_T(x)=p_d\circ\widetilde{H}_{\lambda}(x;q^d).
\end{align*}
In particular, if $T$ is nilpotent with Jordan form partition $\lambda$, then $f=x$, $d=1$, and $F_T(x)=\widetilde{H}_{\lambda}(x;q).$
Thus a regular nilpotent operator has $F_T(x)=\widetilde{H}_{(n)}(x;q)=h_n$, while the zero operator has
\begin{align*}
F_T(x)=\widetilde{H}_{(1^n)}(x;q)
=\sum_{\nu\vdash n}\frac{[n]_q!}{[\nu_1]_q!\cdots [\nu_{\ell(\nu)}]_q!}m_\nu,
\end{align*}
where $[a]_q!=\prod_{j=1}^a(1+q+\cdots+q^{j-1})$.
\item If $T$ is semisimple and the distinct irreducible factors $f_i$ of its characteristic polynomial have degrees $d_i$ and multiplicities $m_i$, then
\begin{align*}
F_T(x)=\prod_i p_{d_i}[\widetilde{H}_{(1^{m_i})}(x;q^{d_i})].
\end{align*}
For a split semisimple operator with eigenspace dimensions $m_1,\ldots,m_r$, this reduces to
\begin{align*}
F_T(x)=\prod_{i=1}^r \widetilde{H}_{(1^{m_i})}(x;q).
\end{align*}
The scalar case is the special case $r=1$.
\item An operator $T$ is \emph{regular} if its minimal and characteristic polynomials coincide. If $T$ is regular semisimple, write its squarefree characteristic polynomial as a product of distinct irreducibles $f_1\cdots f_r$ and set $d_i=\deg f_i$. Then
\begin{align*}
F_T(x)=p_{d_1}p_{d_2}\cdots p_{d_r}.
\end{align*}
Hence a regular semisimple split operator has $F_T(x)=p_1^n=h_1^n$, while a regular semisimple operator with irreducible characteristic polynomial has $F_T(x)=p_n=m_n$.
\item More generally, if $T$ is regular with similarity invariants $\{(f_i,(m_i))\}_{i=1}^r$, then
\begin{align*}
F_T(x)=\prod_{i=1}^r p_{d_i}[h_{m_i}],\qquad d_i=\deg f_i.
\end{align*}
In the regular split case, where $f_i=x-a_i$ for distinct $a_i\in\Fq$, this becomes $F_T(x)=h_{m_1}\cdots h_{m_r}$.
\end{enumerate}
\end{example}

\begin{definition}\label{def:subspace-profile}
  Given a linear operator $T$ on $V$, we say that a subspace $W\subseteq V$ has $T$-profile $\mu=(\mu_1,\mu_2,\ldots)$ if
  \begin{align*}
    \dim (W+TW+\cdots +T^{j-1}W)=\mu_1+\mu_2+\cdots+\mu_j \mbox{ for }j\geq 1.
  \end{align*}
\end{definition}
Let $\sigma(\mu,T)$ denote the number of subspaces with $T$-profile $\mu$. In 1992, Bender, Coley, Robbins and Rumsey \cite[pg.~2]{MR1141317} proposed the problem of finding a formula for $\sigma(\mu,T)$. Specific instances of this problem for various types of operators have been studied~\cite{MR2831705,MR2961399,MR3093853,MR4263652,MR4349887,MR4555237,MR4682040,MR4797454,ram2023diagonal} and the following general solution was obtained in~\cite[Thm.~1.4]{ram2023subspace}.

\begin{theorem}\label{thm:cthulhu}
For each  integer partition $\mu$ and linear operator $T$ over $\Fq$ with similarity invariants $\LL=\{(f_1,\lambda^1),\ldots,(f_r,\lambda^r)\},$ 
  \begin{align*}
    \sigma(\mu,T)=(-1)^{\sum_{j\geq 2}\mu_j} q^{\sum_{j\geq 2}{\mu_j \choose 2}}\langle  F_\LL(x;q),\widetilde{W}_{\mu}(x;q)\, h_{n-|\mu|} \rangle.
  \end{align*}
Here $F_\LL(x;t)$ is the universal invariant flag generating function from Equation~\eqref{eq:class-type-flag}.
\end{theorem}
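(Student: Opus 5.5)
The plan is to reduce to the case $|\mu|=n$ by a skewing argument, and then to prove an identity of symmetric functions in which the profile counts $\sigma(\mu,T)$ are the coefficients of $F_T(x)=F_\LL(x;q)$ (Equation~\eqref{eq:class-type-flag}) in the $q$-Whittaker basis $W_\mu(x;q)$. Since $W_\mu$ and $\widetilde{W}_\mu$ are dual bases, the theorem is equivalent to
\[
h_{n-m}^{\perp}F_T(x)=\sum_{|\mu|=m}(-1)^{\sum_{j\geq 2}\mu_j}q^{\sum_{j\geq 2}\binom{\mu_j}{2}}\sigma(\mu,T)\,W_\mu(x;q)
\]
for every $m\leq n$, where $h_{n-m}^\perp$ is adjoint to multiplication by $h_{n-m}$.

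\textbf{Step 1 (reduction to spanning subspaces).} If $W$ has $T$-profile $\mu$ with $|\mu|=m$, then $U=W+TW+\cdots$ is a $T$-invariant subspace of dimension $m$. Within $U$, the subspace $W$ has $T|_U$-profile $\mu$, and it generates $U$ as an $\Fq[x]$-module. Hence $\sigma(\mu,T)=\sum_U\sigma(\mu,T|_U)$, the sum running over $T$-invariant $U$ with $\dim U=m$. On the symmetric-function side, the definition of $F_T$ through invariant flags (Definition~\ref{def:invflag}) gives $h_{n-m}^\perp F_T=\sum_U F_{T|_U}$ over the same set of $U$: pairing with $m_\nu$ shows that removing the final step of size $n-m$ from a flag leaves an invariant flag of $U$. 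It therefore suffices to prove the case $|\mu|=n$, namely $\sigma(\mu,T)=\epsilon_\mu\langle F_T,\widetilde{W}_\mu\rangle$, with $\epsilon_\mu$ the stated sign and power of $q$.

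\textbf{Step 2 (a triangular relation between flags and profiles).} For each composition $\nu$ of $n$, I will count the ``Krylov chains'' $0=U_0\subset U_1\subset\cdots\subset U_\ell=V$ with $TU_{i}\subseteq U_{i+1}$ and $\dim U_i/U_{i-1}=\nu_i$, in two ways.
\begin{itemize}
\item[(a)] Condition on the smallest member $W=U_1$. Its profile $\mu$ forces lower bounds on the later dimensions, and given $W$ the chain is built by choosing intermediate subspaces. This produces a sum $\sum_\mu \sigma(\mu,T)\,c_{\mu\nu}(q)$ with universal coefficients $c_{\mu\nu}(q)$ that do not depend on $T$.
\item[(b)] Relate the same chains to invariant flags. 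Passing to the invariant subspaces generated by the $U_i$ should express the count linearly in the $E_\lambda(T)$, again with universal coefficients.
\end{itemize}
Comparing the two counts across all $T$ of all class types identifies the transition matrix from $\{\sigma(\mu,\cdot)\}$ to $\{E_\nu(\cdot)\}$ as universal. It then suffices to check this matrix on a family of operators rich enough to separate the $W_\mu$.

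\textbf{Step 3 (identification).} Regular nilpotent and scalar operators are natural test cases, since there $F_T$ equals $h_n$ or $\widetilde{H}_{(1^n)}$ by Example~\ref{eg:invflag-examples}. The cleaner route is to take $T=0$ on $\Fq^m$, with $m$ large and the dimension vector arbitrary. For this operator the profile of $W$ is simply $(\dim W)$, and an arbitrary chain counts products of $q$-binomials. I will use it to show that the universal matrix coincides with the matrix of $\epsilon_\mu\widetilde{W}_\mu$ in the monomial basis, using the known expansion of $\omega P_{\mu'}$, equivalently the $q$-Whittaker Pieri/branching rule. The sign and the factor $q^{\sum_{j\geq2}\binom{\mu_j}{2}}$ should arise from an inclusion--exclusion over the non-spanning pieces in Step 2(a), via the M\"obius function of the subspace lattice, which is $(-1)^kq^{\binom k2}$.

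I expect Step 3 to be the main obstacle: proving that the universal transition matrix really is the $q$-Whittaker one and not merely triangular with the right diagonal. My first attempt will be a recursion in $\ell(\mu)$: strip off $W$, pass to the induced partial map $W+TW\to V/W$, match this with the branching rule $W_\mu=\sum_\kappa(\cdots)\,W_\kappa\,h_{\mu_1-\kappa_1}$, and use induction on $n$.
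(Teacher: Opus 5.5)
The paper does not prove this statement; it quotes it from \cite[Thm.~1.4]{ram2023subspace}. Your plan therefore has to stand on its own.

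\textbf{Step 1 is sound.} Both $\sigma(\mu,T)=\sum_U\sigma(\mu,T|_U)$ and $h_{n-m}^\perp F_T=\sum_U F_{T|_U}$ are correct. (To extract $m_\nu$-coefficients you pair with $h_\nu$, and you use that $E_\alpha(T)$ depends only on the sorted composition.) This step correctly explains the factor $h_{n-|\mu|}$.

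\textbf{Step 2(a) is false.} The number of Krylov chains with $U_1=W$ is not determined by the profile of $W$. Take $n=4$, $\nu=(1,1,1,1)$, and $W$ an eigenline, so $W$ has profile $(1)$.
\begin{itemize}
\item[(i)] For $T=0$, every flag above $W$ qualifies, giving $(1+q)(1+q+q^2)$ chains.
\item[(ii)] For $T$ regular nilpotent and $W=\ker T$, the choice $U_2=\ker T^2$ leaves $q+1$ choices of $U_3$. Each of the other $q^2+q$ planes $U_2\supseteq W$ forces $U_3=U_2+TU_2$. This gives $(1+q)^2$ chains.
\end{itemize}
Universality does hold when $W$ generates $V$. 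Dually, chains above $W$ are then chains for the partial map $T^{*}|_{W^{\perp}}$ (with $T^*$ the transpose on $V^*$), which is simple with defect dimensions $\mu$. But then you are counting only chains whose first term generates $V$, and 2(b) says nothing about that subfamily.

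\textbf{Step 2(b) is only asserted.} That these partial Hessenberg counts are universal linear combinations of the $E_\lambda(T)$ is essentially the content of Theorem~\ref{thm:points}, a substantial result. Taking invariant closures does not by itself produce it.

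\textbf{Step 3 does not identify the matrix.} The test operators you name have $F_T\in\{h_n,\widetilde H_{(1^n)}(x;q)\}$, which span at most a two-dimensional subspace of $\Lambda^n$. Moreover, $T=0$ realizes only one-row profiles. So they cannot pin down a universal functional for general $\mu$. A spanning family, such as all nilpotent classes with $F_T=\widetilde H_\lambda(x;q)$, would work only if you already knew $\sigma(\mu,T)$ for nilpotent $T$, which is the hard case. The M\"obius-function remark about the sign and $q^{\sum_{j\ge2}\binom{\mu_j}{2}}$ is a heuristic. Also, $W_\mu=\sum_\kappa(\cdots)W_\kappa h_{\mu_1-\kappa_1}$ is not the $q$-Whittaker branching rule.

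\textbf{What Step 1 actually buys you.} It gives a clean reduction. Argue by induction on $n$, using the branching rule
\[
h_k^\perp W_\mu=\sum_{\kappa\prec\mu}\prod_i{\mu_i-\mu_{i+1}\brack \mu_i-\kappa_i}_qW_\kappa
\]
(with $\mu/\kappa$ a horizontal strip), and the fact that $\{h_kg:k\ge1\}$ spans $\Lambda^n$. Then the theorem is equivalent to
\[
\sum_{U}\epsilon_\kappa\,\sigma(\kappa,T|_U)=\sum_{\substack{\mu\succ\kappa\\ |\mu|=n}}\epsilon_\mu\,\sigma(\mu,T)\prod_{i\ge1}{\mu_i-\mu_{i+1}\brack \mu_i-\kappa_i}_q
\]
for all $k\ge1$ and $|\kappa|=n-k$. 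Here the left sum runs over $T$-invariant $U$ with $\dim U=n-k$, and $\epsilon_\mu=(-1)^{\sum_{j\ge2}\mu_j}q^{\sum_{j\ge2}\binom{\mu_j}{2}}$. This is a purely linear-algebraic identity. It is already nontrivial for one-row $\kappa$, where it is an inclusion--exclusion over subspaces $W$ with $W+TW=V$. It is the real content of the theorem, and nothing in your proposal proves it.
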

Theorem \ref{thm:cthulhu} will be used to derive an explicit expression for the coefficients $\resbrack{\LL}{\mu,\emptyset}$. The following result of Ram~\cite[Cor.~2.4]{ram2024} relates subspace profiles to defect dimensions.

\begin{lemma}\label{lem:restrictionformula} 
  For each linear operator $T$ on $V$, the number of subspaces $W$ such that the restriction of $T$ to $W$ has defect dimensions $\mu$ is given by $\sigma(\mu,T)$. In particular, given a conjugacy class $\CC(\LL)$ over $\Fq$ and a partition $\mu$ with $|\mu|=\lVert \LL \rVert$, we have
  \begin{align*}
    \resbrack{\LL}{\mu,\emptyset}=(-1)^{\sum_{j\geq 2}\mu_j} q^{\sum_{j\geq 2}{\mu_j \choose 2}}\langle  F_\LL(x;q),\widetilde{W}_{\mu}(x;q)\rangle.
  \end{align*}
\end{lemma}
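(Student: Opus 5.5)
The plan is to prove the first assertion by a duality argument that turns the intersections defining the defect dimensions into the sums defining a $T$-profile. The second assertion will then follow from Theorem~\ref{thm:cthulhu}.

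Fix a nondegenerate symmetric bilinear form $\langle\cdot,\cdot\rangle$ on $V=\Fq^n$, for instance the standard dot product. Let $T^*$ denote the adjoint of $T$, whose matrix is the transpose of that of $T$. For a subspace $U$ write $U^\perp$ for its orthogonal complement. The key identity I will establish is
\[
(T^{-1}U)^\perp = T^*(U^\perp)\qquad\text{for every subspace } U\subseteq V.
\]

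\emph{Inclusion.} Take $u\in U^\perp$ and $x\in T^{-1}U$. Then $\langle T^*u,x\rangle=\langle u,Tx\rangle=0$, which gives $T^*(U^\perp)\subseteq (T^{-1}U)^\perp$.

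\emph{Equality of dimensions.} On one side,
\[
\dim T^{-1}U=\dim\ker T+\dim(U\cap \operatorname{im}T).
\]
On the other side, using $\ker T^*=(\operatorname{im}T)^\perp$, we get
\[
\dim T^*(U^\perp)=\dim U^\perp-\dim\bigl(U^\perp\cap(\operatorname{im}T)^\perp\bigr)=(n-\dim U)-\bigl(n-\dim(U+\operatorname{im}T)\bigr).
\]
This equals $\operatorname{rank}T-\dim(U\cap\operatorname{im}T)$. Hence $\dim T^{-1}U+\dim T^*(U^\perp)=n$, and the inclusion is an equality. This dimension check is the only step needing care; everything else is formal.

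Next, combine the identity with $(A\cap B)^\perp=A^\perp+B^\perp$ and induct on $j$. This gives
\[
\bigl(W\cap T^{-1}W\cap\cdots\cap T^{-(j-1)}W\bigr)^\perp = W^\perp+T^*W^\perp+\cdots+T^{*(j-1)}W^\perp .
\]
Let $\mu$ be the defect dimensions of $T_W$ and $d_j=\dim W_j$. The remark after Theorem~\ref{thm:similarity} gives $d_j=\dim(W\cap\cdots\cap T^{-(j-1)}W)$, so $\mu_1+\cdots+\mu_j=n-d_j$. The displayed identity therefore says exactly that $W^\perp$ has $T^*$-profile $\mu$.

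Since $W\mapsto W^\perp$ is a bijection on subspaces, the number of $W$ for which $T_W$ has defect dimensions $\mu$ equals $\sigma(\mu,T^*)$. A square matrix is similar to its transpose, so $T^*$ and $T$ are similar. Conjugating by a similarity preserves profiles, hence $\sigma(\mu,T^*)=\sigma(\mu,T)$, which proves the first claim.

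For the particular case, assume $|\mu|=\lVert\LL\rVert=n$. A map $T_W$ has defect dimensions summing to $n$ exactly when its operator part lives on a zero-dimensional space, i.e.\ when $T_W$ has similarity invariants $(\mu,\emptyset)$. Thus $\resbrack{\LL}{\mu,\emptyset}=\sigma(\mu,T)$ for any $T\in\CC(\LL)$. Applying Theorem~\ref{thm:cthulhu} with $h_{n-|\mu|}=h_0=1$ gives the stated formula.
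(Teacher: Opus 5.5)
Your proof is correct. The paper gives no argument for the first assertion: it imports it from Ram~\cite[Cor.~2.4]{ram2024}. The displayed formula is then read off from Theorem~\ref{thm:cthulhu} with $h_{n-|\mu|}=h_0=1$, exactly as in your last paragraph, using that $|\mu|=n$ forces the maximal invariant subspace of $T_W$ to be zero.

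So the only real difference is that you supply a self-contained proof of the cited fact. Iterating $(T^{-1}U)^\perp=T^*(U^\perp)$ shows that $W\mapsto W^\perp$ carries the defect chain $W=W_1\supseteq W_2\supseteq\cdots$ of $T_W$ onto the chain $W^\perp\subseteq W^\perp+T^*W^\perp\subseteq\cdots$ that defines the $T^*$-profile. Similarity of $T$ with its transpose then finishes the argument. Compared with the citation, this gives an explicit bijection rather than just an equality of counts. For instance, it identifies the largest $T$-invariant subspace contained in $W$ (the domain of the operator part of $T_W$) with the orthogonal complement of the $T^*$-invariant subspace generated by $W^\perp$.

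Two minor remarks. First, using the dot product over $\Fq$ is harmless despite isotropic vectors, since only nondegeneracy is used; working with annihilators in $V^*$ and the dual map would avoid the choice altogether. Second, the dimension count you single out can be skipped. Indeed, $x\in(T^*U^\perp)^\perp$ iff $\langle u,Tx\rangle=0$ for all $u\in U^\perp$ iff $Tx\in(U^\perp)^\perp=U$. Hence $(T^*U^\perp)^\perp=T^{-1}U$, and the identity follows by taking orthogonal complements.
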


Our next objective is to derive a formula for $\resbrack{\LL}{\mu,\MM}$ for arbitrary values of the parameters. We begin by recalling some basic facts about Hall polynomials (see Macdonald~\cite[Sec.~II.1]{MR1354144} for the details). 
Let $R$ be a discrete valuation ring with maximal ideal $\mathfrak{m}$ such that the residue field $R/\mathfrak{m}$ is finite of order $q$. Let $M$ be a finite $R$-module. 
Suppose the maximal ideal $\mathfrak{m}$ of $R$ is generated by an element $\pi$ (a uniformizing element or uniformizer). The structure theorem for finitely generated modules over a principal ideal domain implies that each finite $R$-module $M$ is isomorphic to a direct sum
\begin{align*}
  M\simeq R/(\pi^{\lambda_1})\oplus R/(\pi^{\lambda_2})\oplus \cdots \oplus R/(\pi^{\lambda_\ell}),
\end{align*}
for some integer partition $\lambda=(\lambda_1,\ldots,\lambda_\ell)$, called the type of $M$.

Each submodule of $M$ has type $\mu$ for some partition $\mu\subseteq \lambda$ (defined as $\mu_i\leq \lambda_i$ for each $i\geq 1$). The Hall polynomials $g^\lambda_{\mu\nu}(t)$ are characterized by the property that the number of submodules $N$ of $M$ such that $N$ has type $\mu$ and the quotient $M/N$ has type $\nu$ is given by $g^\lambda_{\mu\nu}(q)$. The polynomial $g^\lambda_{\mu\nu}(t)$ vanishes unless $|\lambda|=|\mu|+|\nu|$ and both $\mu,\nu\subseteq \lambda.$ A product of Hall--Littlewood polynomials can be expressed in the Hall--Littlewood basis as in Butler~\cite[p.~2]{MR1223236}:
\begin{equation}\label{eq:hall-littlewood-product}
P_\mu(x;t) P_\nu(x;t)=\sum_\lambda  g^\lambda_{\mu \nu}(t^{-1})t^{n(\lambda)-n(\mu)-n(\nu)}P_\lambda(x;t),
\end{equation}
where $n(\lambda)=\sum_{i\geq 1} (i-1)\lambda_i$ for each partition $\lambda$.

\begin{example}
  Suppose $\LL=\{(x,\lambda)\}$, corresponding to a nilpotent operator on $\Fq^n$ with Jordan form partition $\lambda$. If $\MM=\{(x,\mu)\}$ for some partition $\mu\subseteq \lambda$ with $|\mu|=k$, then
  \begin{align*}
    \resbrack{\LL}{(n-k),\MM}=\sum_{\nu }g^\lambda_{\mu\nu}(q),
  \end{align*}
  where the sum is taken over all partitions $\nu$. The sum above was studied independently by Delsarte~\cite{MR25463}, Djubjuk~\cite{djubjuk1948number} and Yeh~\cite{MR24428}. It admits a compact product formula: 
  \begin{align*}
    \sum_{\nu }g^\lambda_{\mu\nu}(q)=\prod_{i\geq 1}q^{\mu'_{i+1}(\lambda'_i-\mu'_i)}{\lambda'_i-\mu'_{i+1}\brack \mu'_i-\mu'_{i+1}}_q,
  \end{align*}
where $\lambda'$ denotes the partition conjugate to $\lambda$.
\end{example}
\begin{example}
  Suppose $\LL=\{(f,\lambda)\}$ is primary with $\deg f=d$. This corresponds to an operator $T$ on $\Fq^n$ where the associated $\Fq[x]$-module is isomorphic to a direct sum $\bigoplus_{i\geq 1}    \Fq[x]/(f^{\lambda_{i}})$. If $\MM=\{(f,\mu)\}$  where $\mu\subseteq \lambda$ and $\lVert \MM \rVert=k$, then
  \begin{align*}
    \resbrack{\LL}{(n-k),\MM}=\sum_{\nu }g^\lambda_{\mu\nu}(q^d),
  \end{align*}
since $\Fq[x]/(f^r)$ is isomorphic to $\F_{q^d}[x]/(x^r)$.
\end{example}
Given a pair $\MM = \{ (f_1, \mu^1), \ldots, (f_m, \mu^m) \}$ and $\LL = \{ (f_1, \lambda^1), \ldots, (f_m, \lambda^m) \}$, define 
\( \MM \leq \LL \) if \( \mu^i \subseteq \lambda^i \) for \( 1 \leq i \leq m \). Since the operator part of a restriction is a $T$-invariant submodule of the original operator, its elementary divisors must be contained in those of $\LL$; hence
$$
 \resbrack{\LL}{\mu,\MM} = 0 \quad \text{unless} \quad \MM \leq \LL. 
$$

 By considering all possible restrictions of an operator on $\Fq^n$ with similarity invariants $\LL$ to subspaces of dimension $k$, we obtain the following generalization of the identity in Example \ref{eg:identity}:
\begin{align*}
  \sum_{\substack{(\mu, \MM)\\ \mu_1=n-k\\ |\mu|+\lVert\MM\rVert=n} }\resbrack{\LL}{\mu,\MM} ={n \brack k}_q.
\end{align*}

Let $\widetilde P_\lambda(x;t) = t^{-n(\lambda)}P_\lambda(x;t^{-1})$. Then the duality of $P_\lambda$ and $H_\lambda$ implies
\[
\langle \widetilde H_\mu(x;t), \widetilde P_\nu(x;t) \rangle = \delta_{\mu\nu}.
\]

\begin{definition}\label{def:skew-modified-hl}
The skew modified Hall--Littlewood function $\widetilde H_{\lambda/\mu}(x;t)$ is defined by
\[
\langle \widetilde H_{\lambda/\mu}, \widetilde P_\nu \rangle
:= \langle \widetilde H_\lambda, \widetilde P_\mu\widetilde P_\nu \rangle.\]
\end{definition}
By Equation~\eqref{eq:hall-littlewood-product} we have $\widetilde P_\mu(x;t)\widetilde P_\nu(x;t)= \sum_\lambda g^\lambda_{\mu\nu}(t)\widetilde P_\lambda(x;t).$ It follows that 
\[
 \langle \widetilde H_{\lambda/\mu}(x;t), \widetilde P_\nu(x;t) \rangle= g^\lambda_{\mu\nu}(t).
\]
Therefore
\[
 \widetilde H_{\lambda/\mu}(x;t)
= \sum_\nu g^\lambda_{\mu\nu}(t)\widetilde H_\nu(x;t).
\]

Note that $\widetilde{H}_{\lambda/\mu}(x;t)$ vanishes unless $\mu\subseteq \lambda.$ When $\mu$ is the empty partition, we have $\widetilde{H}_{\lambda/\mu}(x;t)=\widetilde{H}_\lambda(x;t)$. On the other hand $\widetilde{H}_{\lambda/\mu}(x;t)=1$ if $\mu=\lambda$.

\begin{conjecture}\label{conj:skewHLschurpositive}
 In the Schur expansion $\widetilde{H}_{\lambda/\mu}(x;t)=\sum_\nu a^\nu_{\lambda\mu}(t)s_\nu$, the coefficients $a^\nu_{\lambda\mu}(t)$ are nonnegative integer polynomials in $t$.
\end{conjecture}
\begin{problem}\label{prob:skewHLschurcoeffs}
Find a combinatorial formula for the coefficients in the Schur expansion of $\widetilde{H}_{\lambda/\mu}(x;t).$
\end{problem}
A solution to Problem \ref{prob:skewHLschurcoeffs} would be significant as the special case $\mu=\emptyset$ would recover the celebrated cocharge formula of Lascoux--Sch{\"u}tzenberger~\cite{LascouxSchutzenberger1978}.

\begin{theorem}\label{thm:coeffcomputation}
  If $\LL=\{(f_1,\lambda^1),\ldots,(f_r,\lambda^r)\}$ and $\MM=\{(f_1,\mu^1),\ldots,(f_r,\mu^r)\}$ where $\deg f_i=d_i$ for $1\leq i\leq r$, then
  \begin{align*}
    \resbrack{\LL}{\mu,\MM}=\langle A_{\LL\MM}(x;q), B_{\mu}(x;q)\rangle, 
  \end{align*}
  where
  \begin{align*}
    A_{\LL\MM}(x;t)&:=\prod_{i=1}^r p_{d_i}\circ \widetilde{H}_{\lambda^i/\mu^i}(x;t), \\
    B_{\mu}(x;t)&:=(-1)^{\sum_{j\geq 2}\mu_j}t^{\sum_{j\geq 2}{\mu_j\choose 2}}\widetilde{W}_{\mu}(x;t).
  \end{align*}
\end{theorem}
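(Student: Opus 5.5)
We decompose a restriction into its operator part and a simple remainder, count each piece separately, and then reassemble the counts inside the Hall scalar product using the skew functions $\widetilde H_{\lambda^i/\mu^i}$.

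\textbf{Step 1: decomposition.} Let $T\in\CC(\LL)$ act on $V=\Fq^n$, and let $U\subseteq V$ be a subspace with $T_U$ having invariants $(\mu,\MM)$. The maximal $T_U$-invariant subspace $U_\infty$ is a $T$-invariant subspace. The module $T|_{U_\infty}$ has type $\MM$. Every $T$-invariant subspace of $U$ is contained in $U_\infty$. Passing to the quotient $\bar V=V/U_\infty$ with induced operator $\bar T$ and image $\bar U$, the restriction $\bar T_{\bar U}$ has defect dimensions $\mu$ and trivial operator part. This is because the chain $W_i$ for $U$ contains $U_\infty$ and projects to the chain for $\bar U$, with the same successive codimensions.

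Conversely, choose a $T$-invariant $N$ of type $\MM$ and a subspace $\bar U\subseteq V/N$ such that $\bar T_{\bar U}$ has invariants $(\mu,\emptyset)$. The preimage $U$ then satisfies $U_\infty=N$. Indeed, any $T$-invariant subspace of $U$ maps to a $\bar T$-invariant subspace of $\bar U$, which must be zero, so it lies in $N$. This gives a bijection, and hence
\[
\resbrack{\LL}{\mu,\MM}=\sum_{N}\resbrack{\LL_{V/N}}{\mu,\emptyset},
\]
where $N$ runs over $T$-invariant subspaces of type $\MM$ and $\LL_{V/N}$ is the type of the quotient.

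\textbf{Step 2: counting the pieces.} Split by primary components $f_i$. The number of invariant $N$ with $N$ of type $\MM=\{(f_i,\mu^i)\}$ and quotient of type $\{(f_i,\nu^i)\}$ is $\prod_i g^{\lambda^i}_{\mu^i\nu^i}(q^{d_i})$, as in the primary examples. For the inner count, apply Lemma~\ref{lem:restrictionformula} to the quotient $\{(f_i,\nu^i)\}$, whose size equals $|\mu|$:
\[
\resbrack{\{(f_i,\nu^i)\}}{\mu,\emptyset}=\langle \prod_i p_{d_i}\circ\widetilde H_{\nu^i}(x;q),\,B_\mu(x;q)\rangle .
\]

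\textbf{Step 3: resumming.} By linearity,
\[
\sum_{\nu^1,\dots,\nu^r}\prod_i g^{\lambda^i}_{\mu^i\nu^i}(q^{d_i})\,p_{d_i}\circ\widetilde H_{\nu^i}(x;q)=\prod_i p_{d_i}\circ\Bigl(\sum_{\nu}g^{\lambda^i}_{\mu^i\nu}(t)\widetilde H_\nu(x;t)\Bigr)\Big|_{t=q},
\]
using the convention that $p_d\circ(\cdot)$ at $t=q$ substitutes $t\mapsto q^d$. By the expansion following Definition~\ref{def:skew-modified-hl}, the inner sum is $\widetilde H_{\lambda^i/\mu^i}$, so the whole expression equals $A_{\LL\MM}(x;q)$. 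Pairing with $B_\mu$ gives the theorem.

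\textbf{Expected obstacle.} The main point needing care is Step 1. We must check that the defect dimensions of $T_U$ coincide with those of $\bar T_{\bar U}$, i.e.\ that $\dim W_j(U)-\dim U_\infty=\dim W_j(\bar U)$. This follows from $W_{j+1}(U)=W_j(U)\cap T^{-1}W_j(U)$, using $U_\infty\subseteq W_j(U)$ and the $T$-invariance of $U_\infty$, so that $T^{-1}$ commutes with taking preimages from $\bar V$.

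The plethystic bookkeeping with $q^{d_i}$ also requires care: it is exactly the identity $p_d[\widetilde H(x;t)]|_{t=q}=p_d[\widetilde H(x;q^d)]$ from Definition~\ref{def:class-type}. Degenerate cases, such as $\MM=\LL$ or empty $\mu$, are covered by the conventions $\widetilde H_{\lambda/\lambda}=1$ and $\widetilde H_{\lambda/\emptyset}=\widetilde H_\lambda$.
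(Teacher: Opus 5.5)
Your proposal is correct and follows the same route as the paper's proof. You fix the operator part as a $T$-invariant subspace of type $\MM$, counted by $\prod_i g^{\lambda^i}_{\mu^i\nu^i}(q^{d_i})$, count the simple remainder in the quotient via Lemma~\ref{lem:restrictionformula}, and resum using $\widetilde H_{\lambda/\mu}=\sum_\nu g^\lambda_{\mu\nu}(t)\widetilde H_\nu$ together with $p_d[g(t)F]=g(t^d)\,p_d[F]$. Your Step~1 also makes explicit two facts the paper only asserts: the bijection $U\mapsto (U_\infty, U/U_\infty)$, and that the defining chain of subspaces descends to the quotient with the same codimensions.
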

\begin{proof}
  Let $T\in \CC(\LL)$ and suppose $\lVert \LL \rVert=n$. By definition, $\resbrack{\LL}{\mu,\MM}$ is equal to the number of subspaces $W$ such that the restriction $T_W:W\to V$ has similarity invariants $(\mu,\MM)$.  
Consider a $T$-invariant subspace $U\subset V$ such that
  \begin{enumerate}
  \item the restriction $T_U$ of $T$ to $U$ has similarity invariants $\MM$, 
  \item the map  $\hat{T}_U:V/U\to V/U$ defined by $\hat{T}_U(\alpha+U)=T\alpha+U$ has similarity invariants $\NN=\{(f_1,\nu^1),\ldots,(f_r,\nu^r)\}$.
  \end{enumerate}
The number of such subspaces $U$ is given by
  \begin{align*}
    \prod_{i=1}^r g^{\lambda^i}_{\mu^i \nu^i}(q^{d_i}).
  \end{align*}
  Once $U$ has been chosen, a subspace $W$ counted by $\resbrack{\LL}{\mu,\MM}$ is characterized by the property that $W\supseteq U$ and the restriction of $\hat{T}_U$ to $W/U$ has similarity invariants $(\mu,\emptyset)$. By Lemma \ref{lem:restrictionformula} the number of such subspaces is given by 
  \begin{align*}
    \resbrack{\NN}{\mu,\emptyset}=\langle F_{\NN}(x;q),B_\mu(x;q) \rangle,
  \end{align*}
  where  $B_\mu(x;t)=(-1)^{\sum_{j\geq 2}\mu_j}t^{\sum_{j\geq 2}{\mu_j\choose 2}}\widetilde{W}_{\mu}(x;t).$ Therefore,
  \begin{align*}
    \resbrack{\LL}{\mu,\MM}&=\sum_{(\nu^1,\ldots,\nu^r)} \langle F_{\NN}(x;q),B_\mu(x;q) \rangle \prod_{i=1}^r g^{\lambda^i}_{\mu^i \nu^i}(q^{d_i}),
  \end{align*}
  where the sum is over all tuples $(\nu^1,\ldots,\nu^r)$ of partitions with $\NN=\{(f_1,\nu^1),\ldots,(f_r,\nu^r)\}$. Since 
$    F_{\NN}(x;t)=\prod_{i=1}^r p_{d_i}[\widetilde{H}_{\nu^i}(x;t)]$, it follows that 
  \begin{align*}
    \resbrack{\LL}{\mu,\MM}&=\langle A_{\LL\MM}(x;q), B_\mu(x;q) \rangle,
  \end{align*}
  where
  \begin{align*}
    A_{\LL\MM}(x;t)&= \sum_{(\nu^1,\ldots,\nu^r)} \prod_{i=1}^r g^{\lambda^i}_{\mu^i \nu^i}(t^{d_i})\; p_{d_i}[\widetilde{H}_{\nu^i}(x;t)]\\
    &=\prod_{i=1}^r \sum_{\nu^i} g^{\lambda^i}_{\mu^i \nu^i}(t^{d_i})\; p_{d_i}[\widetilde{H}_{\nu^i}(x;t)]\\
    &=\prod_{i=1}^r p_{d_i}\big[\sum_{\nu^i} g^{\lambda^i}_{\mu^i \nu^i}(t) \widetilde{H}_{\nu^i}(x;t)\big]\\
    &=\prod_{i=1}^r p_{d_i}\circ\widetilde{H}_{\lambda^i/ \mu^i}(x;t), 
  \end{align*}
  proving the result.
\end{proof}
\begin{remark}
The symmetric function $A_{\LL\MM}(x;t)$ is homogeneous of degree $\lVert \LL \rVert-\lVert \MM \rVert$ and vanishes unless $\MM\leq \LL$. We have $A_{\LL\emptyset}(x;t)=F_\LL(x;t)$.
\end{remark}
Theorem \ref{thm:coeffcomputation} together with Proposition \ref{prop:eta} yields the following result.
\begin{theorem}\label{thm:eta}
Let $\LL$ index a conjugacy class in $\MM_n(\Fq)$, and let $(\mu,\MM)$ be the similarity invariants of a linear map defined on a subspace of $\Fq^n$. Then
  \begin{align*}
    \eta((\mu,\MM),\LL)=\frac{1}{{n \brack \mu_1}_q}\frac{|\CC(\LL)|}{\pi(\mu,\MM)} \langle A_{\LL\MM}(x;q), B_{\mu}(x;q)\rangle.
  \end{align*}
\end{theorem}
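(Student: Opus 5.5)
The plan is to combine the double-counting identity of Proposition~\ref{prop:eta} with the symmetric-function evaluation of Theorem~\ref{thm:coeffcomputation}; essentially no new input is needed.

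First, fix a subspace $W\subseteq\Fq^n$ of dimension $k=n-\mu_1$ and a map $T:W\to\Fq^n$ with similarity invariants $(\mu,\MM)$. I will check that the hypothesis $\lVert\LL\rVert=n$ in Proposition~\ref{prop:eta} holds, which is automatic since $\LL$ indexes a conjugacy class of $\MM_n(\Fq)$. Proposition~\ref{prop:eta} then gives
\[
\eta((\mu,\MM),\LL)=\frac{1}{{n \brack \mu_1}_q}\frac{|\CC(\LL)|}{\pi(\mu,\MM)}\resbrack{\LL}{\mu,\MM}.
\]
This step uses that $\pi(\mu,\MM)\neq 0$, which holds because $T$ itself is a map with these invariants. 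It also uses that the count of extensions depends only on the similarity class of $T$, which is the lemma preceding Proposition~\ref{prop:partialsize}.

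Second, I substitute $\resbrack{\LL}{\mu,\MM}=\langle A_{\LL\MM}(x;q),B_\mu(x;q)\rangle$ from Theorem~\ref{thm:coeffcomputation}. To apply that theorem, I write $\LL$ and $\MM$ over a common finite list of irreducibles $f_1,\ldots,f_r$, padding with empty partitions where necessary. This padding is harmless: if $f_i$ appears in $\LL$ but not in $\MM$, the factor is $p_{d_i}\circ\widetilde H_{\lambda^i}$. If $f_i$ appears in $\MM$ but not in $\LL$, then $\MM\not\leq\LL$, so the factor $\widetilde H_{\emptyset/\mu^i}=0$; both sides then vanish, consistent with $\resbrack{\LL}{\mu,\MM}=0$.

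The only point needing care is this bookkeeping of the common index set and the degenerate cases $\MM\not\leq\LL$. I expect no genuine obstacle: the content lies entirely in Proposition~\ref{prop:eta} and Theorem~\ref{thm:coeffcomputation}, and the proof is their concatenation.
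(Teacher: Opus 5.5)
Your proposal is correct and matches the paper's proof exactly: the paper states Theorem~\ref{thm:eta} as an immediate consequence of substituting the scalar-product formula of Theorem~\ref{thm:coeffcomputation} into the double-counting identity of Proposition~\ref{prop:eta}. Your remarks on padding $\LL$ and $\MM$ with empty partitions, and on both sides vanishing when $\MM\not\leq\LL$, agree with the paper's conventions and only make explicit what the paper leaves implicit.
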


\begin{corollary}\label{cor:etasimple}
If $\LL$ indexes a conjugacy class in $\MM_n(\Fq)$ and $\mu$ is a partition of $n$, then
  \begin{align*}
    \eta((\mu,\emptyset),\LL)=\frac{1}{{n \brack \mu_1}_q}\frac{|\CC(\LL)|}{\pi(\mu,\emptyset)} \langle F_{\LL}(x;q), B_{\mu}(x;q)\rangle.
  \end{align*}
\end{corollary}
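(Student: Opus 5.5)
This is the special case $\MM=\emptyset$ of Theorem~\ref{thm:eta}, so the plan is to specialize that theorem and identify $A_{\LL\emptyset}$ with $F_\LL$. First observe that when $\MM=\emptyset$ and $\mu\vdash n$, the condition $|\mu|+\lVert\MM\rVert=n$ holds, so $(\mu,\emptyset)$ is a legitimate pair of similarity invariants of a map on a subspace of dimension $n-\mu_1$. Hence Theorem~\ref{thm:eta} applies, or equivalently Proposition~\ref{prop:eta} combined with Theorem~\ref{thm:coeffcomputation}.

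Next, write $\LL=\{(f_1,\lambda^1),\ldots,(f_r,\lambda^r)\}$ and regard $\MM=\emptyset$ as $\{(f_1,\emptyset),\ldots,(f_r,\emptyset)\}$. This is allowed because the paper permits empty partitions in this notation. The skew function $\widetilde H_{\lambda^i/\emptyset}$ equals $\widetilde H_{\lambda^i}$; this is noted after Definition~\ref{def:skew-modified-hl} and also follows from $g^{\lambda}_{\emptyset\nu}(t)=\delta_{\lambda\nu}$. Therefore
\[
A_{\LL\emptyset}(x;t)=\prod_{i=1}^r p_{d_i}\circ\widetilde H_{\lambda^i}(x;t)=F_\LL(x;t)
\]
by Definition~\ref{def:class-type}, as also recorded in the Remark after Theorem~\ref{thm:coeffcomputation}.

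Substituting this into Theorem~\ref{thm:eta} and specializing $t=q$ gives the displayed formula. As a consistency check, Lemma~\ref{lem:restrictionformula} directly yields $\resbrack{\LL}{\mu,\emptyset}=\langle F_\LL(x;q),B_\mu(x;q)\rangle$, and inserting this into Proposition~\ref{prop:eta} gives the same result without passing through skew functions.

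There is no real obstacle. The only point needing care is the bookkeeping convention that $\MM=\emptyset$ is written with the same list $f_1,\ldots,f_r$ of polynomials as $\LL$. If one prefers to avoid that convention, routing the argument through Lemma~\ref{lem:restrictionformula} and Proposition~\ref{prop:eta} sidesteps it entirely.
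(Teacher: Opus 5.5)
Your proposal is correct and matches the paper: the corollary is the case $\MM=\emptyset$ of Theorem~\ref{thm:eta}, using $A_{\LL\emptyset}(x;t)=F_\LL(x;t)$ as recorded in the remark after Theorem~\ref{thm:coeffcomputation}. Your alternative route through Lemma~\ref{lem:restrictionformula} and Proposition~\ref{prop:eta} is also valid; it is just the $\MM=\emptyset$ case of the proof of Theorem~\ref{thm:coeffcomputation}, where the invariant subspace $U$ is forced to be $\{0\}$ and $\NN=\LL$.
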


Let $\MM_{n,k}(\Fq)$ denote the space of $n\times k$ matrices over $\Fq$. The similarity invariants of a matrix $A\in \MM_{n,k}(\Fq)$ are defined to be the similarity invariants of the corresponding linear transformation from $\Fq^k$ to $\Fq^n$. Theorem \ref{thm:eta} can be recast in the setting of matrices as follows.
\begin{theorem}\label{thm:eta-matrix}
 Suppose $k\leq n$ and let $A\in \MM_{n,k}(\Fq)$ have similarity invariants $(\mu,\MM)$. Then the number of matrices in $\MM_n(\Fq)$ which have similarity invariants $\LL$ and whose first $k$ columns are given by $A$ equals   $\eta((\mu,\MM),\LL)$.
\end{theorem}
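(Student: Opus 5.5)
This is a direct translation of Definition~\ref{def:extension-count} into matrix language. The plan is to set up a dictionary between $n\times n$ matrices with prescribed first $k$ columns and extensions of a fixed partial linear map, and then apply Definition~\ref{def:extension-count} and Theorem~\ref{thm:eta}.

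\textbf{The partial map.} Let $W=\mathrm{span}\{e_1,\ldots,e_k\}\subseteq \Fq^n$. Identify $A\in \MM_{n,k}(\Fq)$ with the map $T_A\in L(W,\Fq^n)$ given by $T_A(e_j)=Ae_j$, the $j$-th column of $A$, for $1\leq j\leq k$. The similarity invariants of $A$ are by definition those of the map $\Fq^k\to\Fq^n$ it determines.

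We must check that these agree with the invariants of $T_A$ as a map on $W\subseteq\Fq^n$. The invariants of a map from a $k$-dimensional space into $\Fq^n$ only make sense after the domain is identified with a subspace of $\Fq^n$. The intended identification is the standard embedding $e_j\mapsto e_j$, and under it the two maps coincide. So the convention is consistent, and $T_A$ has similarity invariants $(\mu,\MM)$.

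\textbf{The bijection.} A matrix $X\in\MM_n(\Fq)$ has first $k$ columns equal to $A$ exactly when $Xe_j=Ae_j$ for $j\leq k$. That is, the operator $x\mapsto Xx$ restricted to $W$ equals $T_A$. So $X\mapsto(\text{operator of }X)$ is a bijection between such matrices and extensions of $T_A$ to operators on $\Fq^n$.

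Under this bijection, the similarity invariants of $X$ (its conjugacy class in $\MM_n(\Fq)$) equal those of the corresponding operator. Hence the count in the statement is exactly the number of extensions of $T_A$ with invariants $\LL$, which is $\eta((\mu,\MM),\LL)$ by Definition~\ref{def:extension-count}. The lemma preceding Proposition~\ref{prop:partialsize} shows this number depends only on $(\mu,\MM)$, so $\eta$ is well defined. Theorem~\ref{thm:eta} then supplies the explicit formula.

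\textbf{Remarks.} There is no real obstacle here. The only point needing care is the identification of the invariants of $A$ with those of $T_A$ on the coordinate subspace $W$.

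For an arbitrary set $C$ of $k$ prescribed columns, conjugate by a permutation matrix $P$ sending $\{e_j:j\in C\}$ to $\{e_1,\ldots,e_k\}$. Conjugation by $P$ preserves conjugacy classes and transports extensions bijectively, so the same count $\eta((\mu,\MM),\LL)$ results. Here $(\mu,\MM)$ are the invariants of the partial map defined on $\mathrm{span}\{e_j:j\in C\}$, which are unchanged by Theorem~\ref{thm:similarity}.
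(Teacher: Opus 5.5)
Your proposal is correct and matches the paper, which gives no separate argument and presents Theorem~\ref{thm:eta-matrix} simply as the matrix-language restatement of Definition~\ref{def:extension-count}, with Theorem~\ref{thm:eta} supplying the formula. Your argument is the same identification: matrices whose first $k$ columns equal $A$ correspond to extensions of the induced map on $\mathrm{span}\{e_1,\ldots,e_k\}$, with the invariants of $A$ read through the standard embedding $\Fq^k\hookrightarrow\Fq^n$.
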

We consider applications of this theorem in the next sections.

\section{Induced representations and irreducible unipotent characters}\label{sec:representation-theory}

The extension counts above also have a natural representation-theoretic interpretation. In this section we explain how orbit intersections determine induced character values, and how Schur expansions connect the invariant flag generating function with irreducible unipotent characters.

  \subsection{Orbit intersections and induced characters}\label{subsec:orbit-intersections-induced-characters}
Let $m<n$ be a positive integer and suppose $H$ denotes the subgroup of $G={\rm GL}_n(\Fq)$ consisting of matrices of the form
  \begin{align*}
    \begin{pmatrix}
      I_m & A_1\\
      {\bf 0} & A_2
    \end{pmatrix},
  \end{align*}
	  where $I_m$ denotes the $m\times m$ identity matrix, $A_1$ is $m\times (n-m)$ and $A_2\in {\rm GL}_{n-m}(\Fq)$. Then $H$ is isomorphic to the semidirect product $\MM_{m,n-m}(\Fq)\rtimes {\rm GL}_{n-m}(\Fq)$, where ${\rm GL}_{n-m}(\Fq)$ acts on the additive group $\MM_{m,n-m}(\Fq)$ by right multiplication. Let $\ch(\Ind^G_H 1)$ denote the character of the induced representation of the trivial representation from $H$ to $G$. For $g\in G$ let $Z_G(g)$ denote the centralizer of $g$ in $G$ and write ${\rm Cl}_G(g)$ for the conjugacy class of $g$ in $G$. Then
	  \begin{align*}
	    \ch(\Ind^G_H 1)(g)
	    &=\#\{xH\in G/H:g xH=xH\} \\
	    &=\frac{\#\{x\in G:x^{-1}gx\in H\}}{|H|} \\
	    &=\frac{|Z_G(g)|}{|H|}|{\rm Cl}_G(g)\cap H|.
	  \end{align*}
	  Thus the intersection size $|{\rm Cl}_G(g)\cap H|$, which can be computed by Theorem~\ref{thm:eta}, determines the induced character value up to an explicit factor.

\subsection{Schur expansion and irreducible unipotent characters}\label{subsec:schur-unipotent}
Let $B$ denote the Borel subgroup of $G$ consisting of upper triangular matrices.

\begin{definition}\label{def:unipotent-characters}
The irreducible unipotent characters of $G$ are the irreducible constituents of the character of the permutation representation of $G$ acting on $G/B$. Let $\chi^\lambda_G$ denote the irreducible unipotent character indexed by $\lambda$.
\end{definition}
For instance, $\chi_G^{(n)}$ denotes the trivial character while $\chi_G^{(1^n)}$ denotes the Steinberg character.

\begin{definition}\label{def:permutation-characters}
Let $\alpha=(\alpha_1,\ldots,\alpha_r)$ be a composition of $n$ and let $P_\alpha\leq G$ be the standard parabolic subgroup stabilizing a partial flag of type $\alpha$:
\begin{align*}
0=V_0\subset V_1\subset \cdots \subset V_r=\Fq^n,\quad \dim(V_i/V_{i-1})=\alpha_i.
\end{align*}
Let
\begin{align*}
\xi_\alpha:=\Ind^G_{P_\alpha}1
\end{align*}
be the permutation character of $G={\rm GL}_n(\Fq)$ acting on the coset space $G/P_\alpha$ (equivalently, on partial flags of type $\alpha$). Note that for $g\in G$, the value $\xi_\alpha(g)$ equals the number of $g$-stable flags of type $\alpha$ which only depends on the partition obtained by sorting the parts of $\alpha$ in weakly decreasing order.
\end{definition}

The Kostka numbers $K_{\lambda\mu}$ appear as transition coefficients in the Schur to monomial and complete homogeneous to Schur expansions:
\begin{align*}
s_\lambda=\sum_\mu K_{\lambda\mu}m_\mu \quad \text{and} \quad h_\mu=\sum_\lambda K_{\lambda\mu}s_\lambda.
\end{align*}
We require the following decomposition of permutation characters into irreducible unipotent characters; see Ernst and Schmidt~\cite[Lemma~2.3]{ErnstSchmidt2024}.
\begin{lemma}\label{lem:xi-kostka}
For each partition $\mu$ of $n$, we have
\begin{align*}
\xi_\mu=\sum_{\lambda\vdash n}K_{\lambda\mu}\chi^\lambda_G,
\end{align*}
where $K_{\lambda\mu}$ denotes a Kostka number and $\chi^\lambda_G$ denotes the irreducible unipotent character of $G$ indexed by $\lambda$.
\end{lemma}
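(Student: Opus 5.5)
The lemma is cited from Ernst and Schmidt. The natural proof goes through the Harish-Chandra theory of the principal series together with the Iwahori--Hecke algebra. I would argue as follows.

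First, identify the endomorphism algebra $\mathrm{End}_G(\Ind_B^G 1)$ with the Iwahori--Hecke algebra $\mathcal{H}_q(S_n)$. By Tits deformation, its irreducible modules are in bijection with the irreducible characters $\psi^\lambda$ of $S_n$, for $\lambda\vdash n$. Under this correspondence the unipotent character $\chi^\lambda_G$ occurs in $\Ind_B^G1$ with multiplicity $\dim\psi^\lambda$, and this is taken as the labelling in Definition~\ref{def:unipotent-characters}. With the usual normalisation, $\chi^{(n)}_G$ is trivial and $\chi^{(1^n)}_G$ is the Steinberg character.

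Second, write $\xi_\mu=\Ind_{P_\mu}^G1$. Since $P_\mu\supseteq B$, the trivial character of $P_\mu$ is a constituent of $\Ind_B^{P_\mu}1$, so $\xi_\mu$ is a subrepresentation of $\Ind_B^G1$. Concretely, $\xi_\mu=e_\mu\cdot\Ind_B^G1$, where $e_\mu\in\mathcal{H}_q(S_n)$ is the idempotent attached to the parabolic subalgebra $\mathcal{H}_q(S_\mu)$ acting by its index (trivial) representation. Hence the multiplicity of $\chi^\lambda_G$ in $\xi_\mu$ equals the dimension of the $\mathcal{H}_q(S_\mu)$-invariants, for the index character, in the Hecke-algebra module $\mathcal{S}^\lambda_q$ corresponding to $\lambda$.

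Third, specialise via Tits deformation. For generic parameters $\mathcal{H}_q(S_n)$ is semisimple, and the branching multiplicities of the index representation of $\mathcal{H}_q(S_\mu)$ inside $\mathcal{S}_q^\lambda$ are constant in the parameter. It therefore suffices to compute at $q=1$, where the multiplicity becomes $\langle\psi^\lambda,\Ind_{S_\mu}^{S_n}1\rangle$. By Young's rule this is $K_{\lambda\mu}$, giving $\xi_\mu=\sum_\lambda K_{\lambda\mu}\chi^\lambda_G$. Equivalently, one can phrase the whole argument through Howlett--Lehrer comparison: $\langle\Ind_{P_\mu}^G1,\Ind_{P_\nu}^G1\rangle=|S_\mu\backslash S_n/S_\nu|$, which is the same double-coset count as for $S_n$. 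Thus the two families $\{\xi_\mu\}$ and $\{\Ind_{S_\mu}^{S_n}1\}$ have identical Gram matrices, and unitriangularity with respect to dominance identifies the constituents.

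The main obstacle is the labelling convention. The identity holds only when the bijection $\lambda\leftrightarrow\chi^\lambda_G$ is chosen compatibly with Young's rule, and not with its $\omega$-twist. I would pin this down by checking the case $\mu=(n)$: then $\xi_{(n)}=1=\chi^{(n)}_G$, and $K_{\lambda,(n)}=\delta_{\lambda,(n)}$, as required. I would also check that the Steinberg character appears only in $\xi_{(1^n)}$, which matches $K_{(1^n),\mu}=\delta_{\mu,(1^n)}$.
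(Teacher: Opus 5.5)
Your proof is correct, and it takes a genuinely different route: the paper gives no argument for this lemma. It quotes the lemma from Ernst and Schmidt, with the labelling of the unipotent characters taken from James.

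You instead transport the multiplicities from $S_n$ in one of two ways. The first uses Iwahori's isomorphism $\mathrm{End}_G(\Ind_B^G1)\cong\mathcal{H}_q(S_n)$, the projector $e_\mu$ cutting out $\xi_\mu$, and Tits deformation. The second uses the double-coset identity $\langle\xi_\mu,\xi_\nu\rangle=|S_\mu\backslash S_n/S_\nu|$.

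Two steps each deserve one more line.

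\emph{Constancy in the parameter.} The multiplicity is $\operatorname{tr}(e_\mu\mid\mathcal{S}^\lambda_u)$. This is a rational function of $u$ which, wherever defined, equals the rank of an idempotent. It is therefore integer-valued on a cofinite subset of the complex numbers, and so constant.

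\emph{Gram-matrix route.} Equal Gram matrices do not by themselves determine decompositions. Set $\chi^\lambda:=\sum_\nu(K^{-1})_{\nu\lambda}\xi_\nu$. From the corresponding $S_n$ computations, read off $\langle\chi^\lambda,\chi^\rho\rangle=\delta_{\lambda\rho}$ and $\langle\chi^\lambda,\xi_\lambda\rangle=K_{\lambda\lambda}=1$. Each $\chi^\lambda$ is then a virtual character of norm one pairing positively with a genuine character, hence irreducible. Now invert $K$.

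The same unitriangularity dissolves your ``main obstacle.'' The identity itself forces $\chi^\lambda_G=\sum_\nu(K^{-1})_{\nu\lambda}\xi_\nu$, so at most one labelling can satisfy the lemma. The labelling produced by your Tits-deformation construction therefore automatically agrees with James's convention used in the paper. Your checks at $(n)$ and $(1^n)$ rule out the conjugate labelling, but for $n\geq 4$ they would not on their own fix the labelling.

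What your route buys is a conceptual explanation: the specialisation at $q=1$ is literally Young's rule, and the argument does not depend on any particular model of the unipotent characters. What the citation buys is brevity and a standard, concrete normalisation of the $\chi^\lambda_G$.
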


\begin{remark}
Ernst and Schmidt use the convention in James~\cite{James1986} for irreducible unipotent characters in which the trivial character is indexed by $(n)$; this agrees with the convention used here. In Macdonald~\cite{MR1354144}, the partition indexing the character is conjugated.
\end{remark}

\begin{proposition}\label{prop:schur-invflag}
For each $g\in G$, we have the following Schur expansion of the invariant flag generating function:
\begin{align*}
F_g(x)=\sum_\lambda \chi^\lambda_G(g)s_\lambda.
\end{align*}
\end{proposition}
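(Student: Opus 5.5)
The approach is to expand both sides in the monomial basis and compare coefficients. Since the monomial symmetric functions $m_\nu$ form a basis, it suffices to show that for every $g\in G$ and every partition $\nu\vdash n$, the coefficient of $m_\nu$ in $\sum_\lambda \chi^\lambda_G(g)s_\lambda$ equals the coefficient $E_\nu(g)$ of $m_\nu$ in $F_g(x)$ (Definition~\ref{def:invflag}).

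First I expand the right-hand side. Using $s_\lambda=\sum_\mu K_{\lambda\mu}m_\mu$, we get
\begin{align*}
\sum_\lambda \chi^\lambda_G(g)s_\lambda=\sum_{\nu\vdash n}\Big(\sum_{\lambda\vdash n}K_{\lambda\nu}\chi^\lambda_G(g)\Big)m_\nu .
\end{align*}
By Lemma~\ref{lem:xi-kostka}, the inner sum is exactly $\xi_\nu(g)$.

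Next I identify $\xi_\nu(g)$ with $E_\nu(g)$. Since $\xi_\nu$ is the permutation character of $G$ acting on $G/P_\nu$, its value $\xi_\nu(g)$ is the number of fixed points of $g$ on that coset space. The coset space $G/P_\nu$ is in natural bijection with partial flags $0=V_0\subset\cdots\subset V_\ell=\Fq^n$ satisfying $\dim(V_i/V_{i-1})=\nu_i$. A flag is fixed by $g$ exactly when $gV_i=V_i$ for every $i$, that is, when every $V_i$ is $g$-invariant. Hence $\xi_\nu(g)=E_\nu(g)$, and the two expansions agree coefficient by coefficient.

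The only point needing care is the indexing convention. Lemma~\ref{lem:xi-kostka} is stated for partitions, while $P_\alpha$ is defined for compositions. This causes no trouble because we apply it only with $\nu$ a partition, which is also how $E_\nu$ is indexed. I also need the convention that the trivial character is indexed by $(n)$, so that Lemma~\ref{lem:xi-kostka} holds as written; the remark following that lemma confirms this. No step is a serious obstacle.
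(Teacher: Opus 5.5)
Your proposal is correct and follows essentially the same route as the paper: write $F_g=\sum_\nu \xi_\nu(g)m_\nu$ (invariant flags are exactly the fixed points of $g$ on $G/P_\nu$), then apply Lemma~\ref{lem:xi-kostka} and the Kostka expansion $s_\lambda=\sum_\mu K_{\lambda\mu}m_\mu$. The only cosmetic difference is that you expand $s_\lambda$ in the monomial basis directly, whereas the paper writes $K_{\lambda\mu}=\langle h_\mu,s_\lambda\rangle$ and reassembles using the duality of the $m$ and $h$ bases.
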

\begin{proof}
By Definition~\ref{def:invflag},
\begin{align*}
F_g(x)=\sum_\mu \xi_\mu(g)m_\mu.
\end{align*}
By Lemma~\ref{lem:xi-kostka}, we have $\xi_\mu(g)=\sum_\lambda K_{\lambda\mu}\chi^\lambda_G(g)=\sum_\lambda\langle h_\mu,s_\lambda\rangle\chi^\lambda_G(g)$. Therefore
\begin{align*}
F_g(x)&=\sum_\mu\sum_\lambda \langle h_\mu,s_\lambda\rangle\chi^\lambda_G(g)m_\mu\\
&=\sum_\mu\left\langle h_\mu,\sum_\lambda\chi^\lambda_G(g)s_\lambda\right\rangle m_\mu\\
&=\sum_\lambda\chi^\lambda_G(g)s_\lambda,
\end{align*}
where the last equality follows from the fact that the monomial and complete homogeneous bases are dual with respect to the Hall scalar product.
\end{proof}
When $g$ is unipotent with Jordan form partition $\mu$ we have $F_g(x)=\tilde{H}_\lambda(x;q)$ and in this case $\chi^\lambda_G(g)$ coincides with a modified Kostka--Foulkes polynomial $\tilde{K}_{\lambda\mu}(q)$. This was already observed by Lusztig \cite[Eq.\ 2.2]{Lusztig1981}.
\section{Applications to matrix enumeration}\label{sec:matrixcounting}

\subsection{Simple linear transformations}\label{subsec:simple-linear-transformations}
Recall that a linear map $T$ defined on a subspace of $V$ is simple if each $T$-invariant subspace is either the zero subspace or all of $V$.

\begin{example}
  The linear map $T$ defined on the subspace $W={\rm span}\{e_i\}_{1\leq i\leq 8}$ of $\Fq^n(n\geq 11)$  by
  \begin{align*}
    e_1\xrightarrow{T} e_2 \xrightarrow{T}e_3 \xrightarrow{T}e_4&\xrightarrow{T}e_9  \\
    e_5\xrightarrow{T} e_6  \xrightarrow{T}e_7 &\xrightarrow{T}e_{10} \\
e_8 &\xrightarrow{T}e_{11}
  \end{align*}
is simple with defect dimensions $\mu=(n-8,3,2,2,1)$.
\end{example}
The above example can be generalized to obtain the following result.
\begin{proposition}\label{prop:defchar}
  Given a partition $\lambda=(\lambda_1,\ldots,\lambda_\ell)$, let $\{e_{ij}\}(1\leq i\leq \ell,1\leq j\leq \lambda_i+1)$ be a linearly independent subset of vectors in $V=\Fq^n$. Let $W$ denote the span of the vectors $\{e_{ij}\}$ for $1\leq i\leq \ell$ and $1\leq j\leq \lambda_i$. Then the linear map $T\in L(W,V)$ whose action on the basis of $W$ is specified by $Te_{ij}=e_{i,j+1}$ is simple with defect dimensions $(n-|\lambda|,\lambda'_1,\lambda'_2,\ldots)$.
\end{proposition}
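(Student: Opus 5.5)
Our plan is to compute the chain $W_0\supseteq W_1\supseteq\cdots$ from the definition of defect dimensions, and then to read off simplicity from the fact that the chain terminates at $\{0\}$.

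\textbf{Computing the chain.} First we claim that, for each $j\geq 1$,
\[
W_j=\mathrm{span}\{e_{ic}: 1\leq c\leq \lambda_i-j+1\}.
\]
We prove this by induction on $j$. The case $j=1$ is the definition of $W$. For the inductive step, suppose $v\in W_j$ and write $v=\sum a_{ic}e_{ic}$ with $c\leq \lambda_i-j+1$. Then
\[
Tv=\sum a_{ic}e_{i,c+1},
\]
and we must decide when $Tv\in W_j$. The vectors $e_{ij}$ are linearly independent, so $Tv\in W_j$ holds exactly when every coefficient $a_{ic}$ with $c+1>\lambda_i-j+1$ vanishes. This condition is $c\leq\lambda_i-j$, which gives the stated form of $W_{j+1}$.

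\textbf{Reading off the defect dimensions.} From the claim, for $j\geq 1$ we get
\[
d_j=\dim W_j=\sum_i \max(\lambda_i-j+1,0)=\sum_{c\geq j}\lambda'_c .
\]
Hence $d_{j-1}-d_j=\lambda'_{j-1}$ for $j\geq 2$. Also $d_0=n$ and $d_1=|\lambda|$, so $\mu_1=n-|\lambda|$. The defect dimensions are therefore $(n-|\lambda|,\lambda'_1,\lambda'_2,\ldots)$, as stated.

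\textbf{Simplicity.} For $j>\lambda_1$ the claim gives $W_j=\{0\}$. So the maximal $T$-invariant subspace is zero and the operator part is the operator on the zero space. Any $T$-invariant subspace $U\subseteq W$ satisfies $U\subseteq W_j$ for all $j$, by induction using $TU\subseteq U$, and hence $U=0$.

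The only remaining point is the convention. If "invariant subspace" is meant in the sense of Definition~\ref{def:simple-map}, where $V$ counts as trivially invariant, then the remark after that definition gives simplicity directly, since the similarity invariants are $(\mu,\emptyset)$. The inductive description of $W_j$ is the only real step, and it is routine.
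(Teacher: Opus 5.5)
Your proof is correct and is essentially the paper's argument. Both identify each term of the chain as the span of the basis vectors $e_{ic}$ whose first few iterates under $T$ stay in $W$, compute the dimensions via the conjugate partition $\lambda'$, and conclude simplicity from the chain reaching $\{0\}$. The differences are cosmetic: you use the recursive indexing $W_1=W$, $W_{i+1}=W_i\cap T^{-1}(W_i)$ of Section~\ref{sec:preliminaries}, whereas the paper's proof writes $W_j$ for $W\cap T^{-1}W\cap\cdots\cap T^{-j}W$, a shift by one. Your induction also handles arbitrary linear combinations rather than only basis vectors, which makes explicit that the condition is coordinatewise.
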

\begin{proof}
Let $W_j=W\cap T^{-1}W\cap\cdots\cap T^{-j}W$. A basis vector $e_{ab}\in W$ lies in $W_j$ precisely when its first $j$ iterates under $T$ remain in $W$, equivalently when $b+j\leq \lambda_a$. Thus $W_j$ is spanned by the vectors $e_{ab}$ with $1\leq b\leq \lambda_a-j$, and hence
\[
\dim W_j=\sum_{a=1}^\ell \max(\lambda_a-j,0)=|\lambda|-(\lambda'_1+\cdots+\lambda'_j).
\]
It follows that the successive defect dimensions are $n-|\lambda|,\lambda'_1,\lambda'_2,\ldots$. Since these intersections eventually become zero, the map $T$ has no operator part and is simple.
\end{proof}

\subsection{Matrix polynomials with prescribed Smith form}\label{subsec:matrix-polynomials}
We give an application of our results to matrix polynomials over a finite field. Given positive integers $m$ and $d$, define
\begin{align*}
  \MM_m(d;q):=\{x^dI+x^{d-1}C_{d-1}+\cdots +C_0:C_i \in \MM_m(\Fq) \mbox{ for }0\leq i\leq d-1\}.
\end{align*}
Here $I$ denotes the identity matrix. Elements of $\MM_m(d;q)$ are polynomials whose coefficients are matrices, often referred to as \emph{matrix polynomials}. Any matrix polynomial in $\MM_m(d;q)$ can also be viewed as an element of the algebra $\MM_m(\Fq[x])$. Let $\GL_m(\Fq[x])$ denote the subset of all elements of $\MM_m(\Fq[x])$ whose determinant is a nonzero scalar in $\Fq$. The elements of $\GL_m(\Fq[x])$ are the \emph{unimodular} polynomial matrices, and they form a group under matrix multiplication. Two matrix polynomials $P,Q\in \MM_m(d;q)$ are said to be equivalent if there exist elements $g_1,g_2\in \GL_m(\Fq[x])$ such that $g_1Pg_2=Q$. Each element $P\in \MM_m(d;q)$ is equivalent to a diagonal matrix ${\rm diag}(p_m,\ldots,p_1)$ where $p_j(1\leq j\leq m)$ are monic polynomials satisfying $p_{j+1}\mid p_{j}$ for $1\leq j<m$. This diagonal form is called the Smith normal form of $P$ while the polynomials $p_1,\ldots,p_m$ are called the invariant factors or Smith invariants of $P.$ Note that for any matrix polynomial in $\MM_m(d;q),$ the product of diagonal entries in its Smith form has degree $md$.

\begin{remark}
If we write $G=\GL_m(\Fq[x])$, then we have an action of the direct product  $G\times G$ on the set $\{g_1Pg_2: P\in \MM_m(d;q)\mbox{ and }g_1,g_2\in G\}$ defined by $(g_1,g_2)*P=g_1Pg_2^{-1}$. The orbit of an element $P$ under this action consists of all polynomial matrices equivalent to $P$. Note that $G$ has infinite order for $m>1$ and, consequently, the orbits are infinite in general.
\end{remark}
We are interested in the following problem.
\begin{problem}\label{prob:matrix-polynomial-smith}
Determine the number of polynomial matrices in $\MM_m(d;q)$ with a given Smith form.  
\end{problem}
For $d=1$, this problem reduces to determining the number of square matrices in a conjugacy class, since matrices $A,B\in \MM_m(\Fq)$ are similar if and only if $xI-A$ is equivalent to $xI-B$. 
Given a matrix $A\in \MM_m(\Fq)$ with similarity invariants $\LL=\{(f_1,\lambda^1),\ldots,(f_r,\lambda^r)\}$, the Smith form of $xI-A$ is the polynomial matrix ${\rm diag}(p_m,\ldots,p_1)$, where $p_j=\prod_{i=1}^r f_i^{\lambda^i_j}$ for $1\leq j\leq m$. This correspondence defines a bijection $\Theta_m$ from the functions $\LL$ which index conjugacy classes of $m\times m$ matrices to the $m$-tuples $(p_m\mid p_{m-1}\mid \cdots \mid p_1)$ of monic polynomials over $\Fq$ such that the product $p_1\cdots p_m$ has degree $m.$

\begin{theorem}\label{thm:matrix-polynomial-smith}
  Let $(p_m\mid\cdots\mid p_1)$ be an $m$-tuple of monic polynomials in $\Fq[x]$, with product of degree $n=md$. The number of elements in $\MM_m(d;q)$ whose Smith form has diagonal entries $p_j$ for $1\leq j\leq m$ is
  \begin{align*}
    \eta((\mu,\emptyset),\LL),
  \end{align*}
  where $\LL$ is the unique function for which $\Theta_n(\LL)=(\underbrace{1,\ldots,1}_{n-m},p_m,\ldots,p_1)$, and $\mu=(m^d)$ denotes the rectangular partition with $d$ parts equal to $m$.
\end{theorem}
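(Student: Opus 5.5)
The plan is to use the block companion linearization. To $P=x^dI+x^{d-1}C_{d-1}+\cdots+C_0\in\MM_m(d;q)$ we attach the $n\times n$ matrix ($n=md$)
\[
\mathcal C_P=\begin{pmatrix}
0 & 0 & \cdots & 0 & -C_0\\
I & 0 & \cdots & 0 & -C_1\\
0 & I & \cdots & 0 & -C_2\\
\vdots & & \ddots & & \vdots\\
0 & 0 & \cdots & I & -C_{d-1}
\end{pmatrix}.
\]
The classical fact is that $xI_n-\mathcal C_P$ is equivalent over $\Fq[x]$ to $\mathrm{diag}(I_{n-m},P)$. One way to see it is to eliminate the identity blocks by unimodular row and column operations (Horner-type elimination). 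Consequently the invariant factors of $xI-\mathcal C_P$ are $(1,\dots,1,p_m,\dots,p_1)$, where $p_j$ are the Smith invariants of $P$. Since $A\mapsto xI-A$ determines similarity, $\mathcal C_P$ lies in $\CC(\LL)$ exactly when $\Theta_n(\LL)=(1^{n-m},p_m,\dots,p_1)$. I will verify this equivalence first, as a short lemma.

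Next I observe that $P\mapsto\mathcal C_P$ is a bijection from $\MM_m(d;q)$ onto the set of $n\times n$ matrices whose first $n-m$ columns equal a fixed matrix $A_0\in\MM_{n,n-m}(\Fq)$. Here $A_0$ sends $e_i\mapsto e_{i+m}$ for $1\le i\le n-m$, and the last $m$ columns are free, encoding $-C_0,\dots,-C_{d-1}$. Combined with the first step, the count in question equals the number of matrices in $\CC(\LL)$ whose first $n-m$ columns are $A_0$. By Theorem~\ref{thm:eta-matrix}, this number is $\eta(\mathrm{sim}(A_0),\LL)$.

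It remains to compute the similarity invariants of $A_0$, viewed as $T\in L(W,V)$ with $W=\mathrm{span}\{e_1,\dots,e_{n-m}\}$. This map is exactly the one in Proposition~\ref{prop:defchar}, with $m$ chains $e_i\to e_{i+m}\to\cdots$ each of length $d-1$ inside $W$ and ending outside $W$. That is, it corresponds to the partition $\lambda=((d-1)^m)$, so it is simple with defect dimensions $(n-|\lambda|,\lambda'_1,\dots)=(m,m,\dots,m)=(m^d)$. Hence $\mathrm{sim}(A_0)=((m^d),\emptyset)$, giving $\eta((\mu,\emptyset),\LL)$ with $\mu=(m^d)$.

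The main obstacle is the Smith-form equivalence of $xI-\mathcal C_P$ with $\mathrm{diag}(I,P)$, in particular checking the ordering conventions so that the invariant factors come out as $(1^{n-m},p_m,\dots,p_1)$. I would handle this by explicit block elimination, using the $-I$ subdiagonal blocks of $xI-\mathcal C_P$ as pivots to clear the entries and accumulate $P$ in a single block. The degenerate case $d=1$ needs a separate sanity check: there $W=0$ and $\mu=(m)$, which is consistent.
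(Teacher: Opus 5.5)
Your proposal is correct and follows the paper's proof essentially step for step. The steps match: block companion linearization, identification of matrix polynomials with extensions of the fixed first $m(d-1)$ columns, Theorem~\ref{thm:eta-matrix}, and Proposition~\ref{prop:defchar} with $\lambda=((d-1)^m)$, which gives defect dimensions $(m^d)$. The only difference is that you verify the equivalence $xI-\mathcal C_P\sim\mathrm{diag}(I_{n-m},P)$ by explicit block elimination, whereas the paper cites Gohberg, Lancaster and Rodman for this fact.
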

\begin{proof}
  Let $N(p_1,\ldots,p_m)$ denote the number of elements in $\MM_m(d;q)$ whose Smith form has diagonal entries $p_j(1\leq j\leq m)$. It is easy to verify (see Gohberg, Lancaster and Rodman~\cite[Thm.~1.1]{MR3396732}) that the matrix polynomial $P=x^dI+x^{d-1}C_{d-1}+\cdots+C_0$ has the same nonunit invariant factors as $xI-A$ where $A$ is the block companion matrix
  \begin{equation}\label{eq:bcm}
    \begin{bmatrix}
0 & 0 & 0 & \cdots & 0 & -C_0 \\
I & 0 & 0 & \cdots & 0 & -C_1 \\
0 & I & 0 & \cdots & 0 & -C_2 \\
\vdots & \vdots & \vdots & \ddots & \vdots & \vdots \\
0 & 0 & 0 & \cdots & I & -C_{d-1}
\end{bmatrix}.
  \end{equation}
Here $0$ and $I$ denote the zero and identity matrices of size $m$. Therefore $N(p_1,\ldots,p_m)$ is equal to the number of $n\times n$ matrices of the above form that lie in the conjugacy class indexed by
\[
\LL=\Theta_n^{-1}((\underbrace{1,\ldots,1}_{n-m},p_m,\ldots,p_1)).
\]
This number is $\eta((\mu,\emptyset),\LL)$, where $(\mu,\emptyset)$ are the similarity invariants of the map $T:\Fq^{m(d-1)}\to \Fq^{md}$ whose matrix, with respect to the standard bases, consists of the first $m(d-1)$ columns of Equation~\eqref{eq:bcm}. Since $Te_i=e_{i+m}$ for each $1\leq i\leq m(d-1)$, successive applications of $T$ to $e_i$, for $1\leq i\leq m$, give the chains
$$
e_i\to e_{i+m}\to e_{i+2m}\to\cdots \to e_{i+(d-1)m}.
$$
By Proposition~\ref{prop:defchar}, the map $T$ is simple with defect dimensions $\mu=(m^d)$, and the result follows. 
\end{proof}

\subsection{Matrices with prescribed columns and characteristic polynomial}\label{subsec:prescribed-columns-characteristic}
The counting formula of Theorem \ref{thm:eta} can be used to enumerate various classes of matrices with some specified columns (or rows); for instance, matrices with a given rank, or minimal or characteristic polynomial, or other conjugacy class invariants. We illustrate this by computing the number of matrices with specified columns and characteristic polynomial.

\begin{definition}\label{def:characteristic-extension-count}
Let $T$ be a linear map defined on a subspace $W$ of $V$ with similarity invariants $(\mu,\MM)$. Given a monic polynomial $f\in \Fq[x]$, let $\Phi((\mu,\MM),f)$ denote the number of extensions of $T$ to all of  $V$ which have characteristic polynomial $f$.
\end{definition}
The characteristic polynomial of an operator with similarity invariants $\LL=\{(f_1,\lambda^1),\ldots,(f_r,\lambda^r)\}$ is $\prod_{i=1}^r f_i^{|\lambda^i|}$. Therefore, if $f=\prod_{i=1}^r f_i^{n_i}$ denotes the canonical factorization of $f$ into distinct irreducible polynomials $f_i$, then
\begin{equation}\label{eq:phiviaeta}
  \Phi((\mu,\MM),f)=\sum_{\LL}\eta((\mu,\MM),\LL),
\end{equation}
where the sum is taken over all $\LL=\{(f_1,\lambda^1),\ldots,(f_r,\lambda^r)\}$ satisfying $|\lambda^i|=n_i$ for $1\leq i\leq r.$ We will see that the sum above admits a nice closed form in the case $\MM=\emptyset$. Recall the definition of $c_q(d,\lambda)$ from Equation \eqref{eq:centralizer}. We require the following lemma.

\begin{lemma}\label{lem:censum}
  We have
  \begin{align*}
    \sum_{\lambda \vdash n}\frac{\widetilde{H}_\lambda(x;t)}{c_t(1,\lambda)}=h_n\left[\frac{X}{t-1}\right],
  \end{align*}
    where $\widetilde{H}_\lambda(x;t)$ denotes a modified Hall--Littlewood polynomial.
  \end{lemma}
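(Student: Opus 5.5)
The plan is to convert each summand into a Hall--Littlewood $P$-function in the parameter $t^{-1}$, evaluated at a plethystic argument. After that, the sum over $\lambda$ collapses by a classical identity for Hall--Littlewood functions.

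First I will rewrite the centralizer factor. Since $\sum_i(\lambda'_i)^2=2n(\lambda)+|\lambda|$, Equation~\eqref{eq:centralizer} gives
\[
c_t(1,\lambda)=t^{2n(\lambda)+n}\,b_\lambda(t^{-1}),\qquad b_\lambda(s):=\prod_{i\geq 1}(s;s)_{m_i(\lambda)}.
\]
Next I will rewrite the numerator. By the definitions of $\widetilde H_\lambda$ and $H_\lambda$ recalled in Section~\ref{subsec:symmetric-functions},
\[
\widetilde H_\lambda(x;t)=t^{n(\lambda)}H_\lambda(x;t^{-1})=t^{n(\lambda)}\,b_\lambda(t^{-1})\,P_\lambda\Big[\tfrac{X}{1-t^{-1}};t^{-1}\Big].
\]
In the quotient the factors $b_\lambda(t^{-1})$ cancel, leaving
\[
\frac{\widetilde H_\lambda(x;t)}{c_t(1,\lambda)}=t^{-n}\,\big(t^{-1}\big)^{n(\lambda)}P_\lambda\Big[\tfrac{X}{1-t^{-1}};t^{-1}\Big].
\]

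The key input is the classical identity
\[
\sum_{|\lambda|=n}s^{n(\lambda)}P_\lambda(x;s)=h_n(x)
\]
(Macdonald, Ch.~III, the degree-$n$ part of $\sum_\lambda s^{n(\lambda)}P_\lambda=\prod_i(1-x_i)^{-1}$). As a sanity check for $n=2$: $P_{(2)}+sP_{(1,1)}=m_2+(1-s)m_{11}+sm_{11}=h_2$. If a self-contained argument is preferred, I would derive it from the Cauchy identity $\sum_\lambda P_\lambda(x;s)Q_\lambda(y;s)=\prod_{i,j}\frac{1-sx_iy_j}{1-x_iy_j}$ by specializing $y$ to $(1,s,s^2,\dots)$. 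This specialization is exactly where the extra weight $s^{n(\lambda)}$ comes from, and it turns the right side into $\prod_i(1-x_i)^{-1}$. Verifying that specialization cleanly is the only step I expect to require real care. The rest of the argument is bookkeeping with the normalizations $b_\lambda$ and $n(\lambda)$.

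Applying the identity with $s=t^{-1}$, then substituting $X\mapsto X/(1-t^{-1})$ plethystically (which is a ring homomorphism, so it commutes with the sum), gives
\[
\sum_{\lambda\vdash n}\frac{\widetilde H_\lambda(x;t)}{c_t(1,\lambda)}=t^{-n}\,h_n\Big[\tfrac{X}{1-t^{-1}}\Big]=t^{-n}\,h_n\Big[\tfrac{tX}{t-1}\Big].
\]
Finally, $h_n$ is homogeneous of degree $n$, so plethystic substitution of $tX$ pulls out exactly $t^n$. This yields $h_n\big[\frac{X}{t-1}\big]$, as claimed in Lemma~\ref{lem:censum}.
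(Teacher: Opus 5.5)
Your proposal is correct and is essentially the paper's proof. Both arguments rest on the identity $h_n=\sum_{\lambda\vdash n}t^{n(\lambda)}P_\lambda(x;t)$, the plethystic definition of $H_\lambda$ under $t\mapsto t^{-1}$, the relation $\sum_i(\lambda_i')^2=n+2n(\lambda)$, and homogeneity of $h_n$ to absorb $t^{-n}$; the only difference is that you simplify each summand first, whereas the paper transforms the identity and then matches it against $c_t(1,\lambda)$.
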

    \begin{proof}
    We begin with the identity (Macdonald~\cite[Example~III.6.1]{MR1354144})
    \begin{align*}
      h_n=\sum_{\lambda \vdash n}t^{n(\lambda)}P_\lambda(x;t),
    \end{align*}
    where $n(\lambda)=\sum_{i\geq 1}(i-1)\lambda_i$ and $P_\lambda$ denotes a Hall--Littlewood polynomial. From the plethystic relation
    \begin{align*}
      H_\lambda(x;t)=\left(\prod_{i\geq 1}(t;t)_{m_i(\lambda)}\right) P_\lambda\left[\frac{X}{1-t}\right],
    \end{align*}
    it follows that 
    \begin{align*}
      h_n\left[\frac{X}{1-t}\right]&=\sum_{\lambda \vdash n}t^{n(\lambda)}P_\lambda\left[\frac{X}{1-t}\right]\\
      &=\sum_{\lambda \vdash n}t^{n(\lambda)}\frac{H_\lambda(x;t)}{\prod_{i\geq 1}(t;t)_{m_i(\lambda)}}.
    \end{align*}
    Replacing $t$ by $1/t$, we obtain
    \begin{align*}
      h_n\left[\frac{tX}{t-1}\right]&=\sum_{\lambda \vdash n}t^{-n(\lambda)}\frac{H_\lambda(x;t^{-1})}{\prod_{i\geq 1}(t^{-1};t^{-1})_{m_i(\lambda)}}\\
      &=\sum_{\lambda \vdash n}t^{-2n(\lambda)}\frac{\widetilde{H}_\lambda(x;t)}{\prod_{i\geq 1}(t^{-1};t^{-1})_{m_i(\lambda)}}.
    \end{align*}
    Therefore,
    \begin{align*}
            h_n\left[\frac{X}{t-1}\right]&= \sum_{\lambda \vdash n}t^{-n-2n(\lambda)}\frac{\widetilde{H}_\lambda(x;t)}{\prod_{i\geq 1}(t^{-1};t^{-1})_{m_i(\lambda)}}.
    \end{align*}
    Since $n+2n(\lambda)=\sum_{i\geq 1} \lambda'_i+2{\lambda'_i \choose 2}=\sum_{i\geq 1}(\lambda'_i)^2$, it follows that
        \begin{align*}
          h_n\left[\frac{X}{t-1}\right]&= \sum_{\lambda \vdash n}\frac{\widetilde{H}_\lambda(x;t)}{t^{\sum_{i\geq 1}(\lambda'_i)^2}\prod_{i\geq 1}(t^{-1};t^{-1})_{m_i(\lambda)}}\\
          &=\sum_{\lambda \vdash n}\frac{\widetilde{H}_\lambda(x;t)}{c_t(1,\lambda)},
    \end{align*}
completing the proof.
  \end{proof}
  \begin{theorem}\label{thm:characteristic-extension-count}
If $f=\prod_{i=1}^r f_i^{n_i}$ where the $f_i$ are distinct irreducible polynomials over $\Fq$ with $\deg f_i=d_i$ and $\mu$ is a partition of $n$, then
    \begin{align*}
      \Phi((\mu,\emptyset),f)=\frac{\gamma_q(n)}{{n \brack \mu_1}_q\pi(\mu,\emptyset)}\langle C_{f}(x;q),B_\mu(x;q)   \rangle, 
    \end{align*}
    where
    \begin{align*}
    C_f(x;t):=\prod_{i=1}^r p_{d_i}\circ h_{n_i}\left[ \frac{X}{t-1} \right].      
    \end{align*}
  \end{theorem}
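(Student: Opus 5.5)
Start from Equation~\eqref{eq:phiviaeta} with $\MM=\emptyset$ and substitute Corollary~\ref{cor:etasimple} for each summand. This gives
\[
\Phi((\mu,\emptyset),f)=\frac{1}{{n \brack \mu_1}_q\,\pi(\mu,\emptyset)}\sum_{\LL}|\CC(\LL)|\,\langle F_\LL(x;q),B_\mu(x;q)\rangle ,
\]
where the sum runs over $\LL=\{(f_1,\lambda^1),\ldots,(f_r,\lambda^r)\}$ with $\lambda^i\vdash n_i$. The prefactor $\frac{1}{{n \brack \mu_1}_q\pi(\mu,\emptyset)}$ depends only on $\mu$, so it comes out of the sum. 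Next use Hall's formula $|\CC(\LL)|=\gamma_q(n)/\prod_i c_q(d_i,\lambda^i)$ and pull out $\gamma_q(n)$. By bilinearity of the Hall scalar product, it then suffices to prove the symmetric-function identity
\[
\sum_{\lambda^1\vdash n_1,\ldots,\lambda^r\vdash n_r}\ \prod_{i=1}^r\frac{p_{d_i}[\widetilde{H}_{\lambda^i}(x;q)]}{c_q(d_i,\lambda^i)}\;=\;\prod_{i=1}^r p_{d_i}\circ h_{n_i}\Big[\frac{X}{t-1}\Big]\Big|_{t=q}.
\]

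The left side factors as a product over $i$ of independent sums. So the identity reduces to proving, for each fixed $d$ and $m$,
\[
\sum_{\lambda\vdash m}\frac{p_d[\widetilde{H}_\lambda(x;t)]}{c_t(d,\lambda)}\Big|_{t=q}=p_d\circ h_m\Big[\frac{X}{t-1}\Big]\Big|_{t=q}.
\]
Here I would use the rule recorded in Definition~\ref{def:class-type}: after specializing $t=q$, plethysm by $p_d$ replaces $t$ by $q^d$ inside, so $p_d[\widetilde{H}_\lambda(x;t)]|_{t=q}=p_d[\widetilde{H}_\lambda(x;q^d)]$. Comparing \eqref{eq:centralizer} for $c_q(d,\lambda)$ with $c_t(1,\lambda)$ shows that $c_q(d,\lambda)=c_{q^d}(1,\lambda)$. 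The scalar $1/c_{q^d}(1,\lambda)$ is a constant, so it passes inside $p_d[\cdot]$. The left side is therefore
\[
p_d\Big[\sum_{\lambda\vdash m}\frac{\widetilde{H}_\lambda(x;s)}{c_s(1,\lambda)}\Big]\quad\text{with } s=q^d,
\]
and Lemma~\ref{lem:censum} turns the inner sum into $h_m[X/(s-1)]$. Since $p_d[h_m[X/(t-1)]]$ evaluated at $t=q$ equals $p_d[h_m[X/(q^d-1)]]$, the two sides agree.

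The main obstacle is making the plethystic bookkeeping with the parameter rigorous. Plethysm by $p_d$ raises $t$ to $t^d$, so the convention "specialize $t=q$ after the plethysm" must be applied consistently to both $F_\LL$ and $C_f$. The clean way is to treat $t$ as a variable (so $p_d[t]=t^d$) and apply plethysm over $\QQ(t)$, and I would verify that constants behave correctly under this convention. The point is that $p_d[c\,g]=p_d[c]\,p_d[g]$, so a scalar $c(t)$ becomes $c(t^d)$. This is exactly why the constant $c_t(1,\lambda)$ inside the plethysm matches $c_q(d,\lambda)=c_{q^d}(1,\lambda)$ outside it after specializing $t=q$. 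With that settled, recombining the product over $i$ gives $C_f(x;q)$ and hence the stated formula.
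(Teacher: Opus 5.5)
Your proposal is correct and matches the paper's proof. Both substitute Corollary~\ref{cor:etasimple} into \eqref{eq:phiviaeta}, extract $\gamma_q(n)$ via Hall's formula, factor the sum over $(\lambda^1,\ldots,\lambda^r)$, and apply Lemma~\ref{lem:censum} inside each $p_{d_i}$-plethysm. Your identity $c_q(d,\lambda)=c_{q^d}(1,\lambda)$, i.e.\ that $p_d[\,\cdot\,]$ turns $c_t(1,\lambda)$ into $c_t(d,\lambda)$ when $t$ is a plethystic variable, spells out the convention the paper uses without comment in the second line of \eqref{eq:cf}.
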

  \begin{proof}
From Equation \eqref{eq:phiviaeta} and Corollary \ref{cor:etasimple}, 
    \begin{align*}
        \Phi((\mu,\emptyset),f)=\frac{1}{{n \brack \mu_1}_q \pi(\mu,\emptyset)} \sum_{\LL}|\CC(\LL)| \langle F_{\LL}(x;q), B_{\mu}(x;q)\rangle,
    \end{align*}
    where the sum is taken over all functions $\LL=\{(f_1,\lambda^1),\ldots,(f_r,\lambda^r)\}$ satisfying $|\lambda^i|=n_i$ for $1\leq i\leq r.$ If we write $c(\LL)=\prod_{i=1}^r c_q(d_i,\lambda^i)$, then
    \begin{align*}
      \Phi((\mu,\emptyset),f)&=\frac{\gamma_q(n)}{{n \brack \mu_1}_q \pi(\mu,\emptyset)} \sum_{\LL} \langle \frac{F_{\LL}(x;q)}{c(\LL)}, B_{\mu}(x;q)\rangle\\
      &=\frac{\gamma_q(n)}{{n \brack \mu_1}_q \pi(\mu,\emptyset)} \langle C_{f}(x;q), B_{\mu}(x;q)\rangle,
    \end{align*}
    where, with $\LL$ varying as before,
    \begin{align}
      C_f(x;t)&=\sum_{\LL}\frac{F_{\LL}(x;t)}{\prod_{i=1}^r c_t(d_i,\lambda^i) }\label{eq:cf}\\ 
                    &=\sum_{\substack{\lambda^i \vdash n_i\\ 1\leq i\leq r}}\prod_{i=1}^r p_{d_i}\bigg[ \frac{\widetilde{H}_{\lambda^i}(x;t)}{c_t(1,\lambda^i)} \bigg] \notag\\ 
              &=\prod_{i=1}^r p_{d_i}\bigg[\sum_{\lambda^i\vdash n_i} \frac{\widetilde{H}_{\lambda^i}(x;t)}{c_t(1,\lambda^i)} \bigg], \notag\\
      &=\prod_{i=1}^r p_{d_i}\circ h_{n_i}\left[ \frac{X}{t-1} \right].\notag
    \end{align}
Here the last equality follows from Lemma \ref{lem:censum}.
  \end{proof}
  \begin{corollary}\label{cor:matrix-polynomial-determinant}
Let $m$ and $d$ be positive integers, and let $f=\prod_{i=1}^r f_i^{n_i}$ be a monic polynomial of degree $md$, where the $f_i$ are distinct irreducible polynomials over $\Fq$ and $d_i=\deg f_i$. The number of matrix polynomials $P\in \MM_m(d;q)$ with determinant $f$ is
    \begin{align*}
      \gamma_q(m)\langle C_f(x;q),B_{(m^d)}(x;q)\rangle,
    \end{align*}
    where
    \begin{align*}
      C_f(x;t):=\prod_{i=1}^r p_{d_i}\circ h_{n_i}\left[\frac{X}{t-1}\right].
    \end{align*}
  \end{corollary}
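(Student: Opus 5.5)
The plan is to reduce the count to the quantity $\Phi((\mu,\emptyset),f)$ of Definition~\ref{def:characteristic-extension-count}, using the block companion matrix from the proof of Theorem~\ref{thm:matrix-polynomial-smith}. Then Theorem~\ref{thm:characteristic-extension-count} gives the symmetric-function expression, and only the scalar prefactor remains to be simplified.

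\textbf{Step 1: reduction to $\Phi$.} Set $n=md$. The map $P=x^dI+x^{d-1}C_{d-1}+\cdots+C_0\mapsto A$, with $A$ the block companion matrix \eqref{eq:bcm}, is a bijection from $\MM_m(d;q)$ onto the set of $n\times n$ matrices whose first $m(d-1)$ columns agree with those of \eqref{eq:bcm}. Indeed, the last $m$ columns of $A$ are exactly the free data $-C_0,\ldots,-C_{d-1}$.

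By the result of Gohberg, Lancaster and Rodman quoted in the proof of Theorem~\ref{thm:matrix-polynomial-smith}, $P$ and $xI-A$ have the same nonunit invariant factors. Both $\det P$ and $\det(xI-A)$ are monic of degree $n$: for $P$ the leading term is $x^{md}$, and for $xI-A$ this is standard. Since each determinant equals the product of its invariant factors up to a unit, we get $\det P=\det(xI-A)$. (Alternatively, one can verify this directly by block column operations.) So $P$ has determinant $f$ if and only if $A$ has characteristic polynomial $f$.

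The first $m(d-1)$ columns of $A$ define the map $T$ with $Te_i=e_{i+m}$. By Proposition~\ref{prop:defchar}, as in the proof of Theorem~\ref{thm:matrix-polynomial-smith}, $T$ has similarity invariants $((m^d),\emptyset)$. Therefore, using Theorem~\ref{thm:eta-matrix} and Equation~\eqref{eq:phiviaeta}, the desired count equals $\Phi(((m^d),\emptyset),f)$.

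\textbf{Step 2: apply Theorem~\ref{thm:characteristic-extension-count}.} With $\mu=(m^d)$ this gives
\[
\Phi(((m^d),\emptyset),f)=\frac{\gamma_q(n)}{{n\brack m}_q\,\pi((m^d),\emptyset)}\langle C_f(x;q),B_{(m^d)}(x;q)\rangle .
\]

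\textbf{Step 3: simplify the prefactor.} This is the only real computation. By Proposition~\ref{prop:partialsize} with $k=\dim W=m(d-1)$ and $\lVert\MM\rVert=0$,
\[
\pi((m^d),\emptyset)=\gamma_q(m(d-1))\,q^{(d-1)m^2}\prod_{i\geq1}{\mu_i\brack\mu_{i+1}}_q .
\]
Each factor in the product is ${m\brack m}_q=1$ or ${m\brack 0}_q=1$, so the product is $1$.

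On the other hand, counting $m$-dimensional subspaces via bases gives
\[
\frac{\gamma_q(n)}{{n\brack m}_q}=\gamma_q(m)\,\gamma_q(n-m)\,q^{m(n-m)} .
\]
Since $n-m=m(d-1)$, we have $q^{m(n-m)}=q^{(d-1)m^2}$ and $\gamma_q(n-m)=\gamma_q(m(d-1))$. These cancel against $\pi((m^d),\emptyset)$ exactly, leaving the prefactor $\gamma_q(m)$. This yields the stated formula.

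The main point needing care is Step 1: the determinant identity and the bijectivity of $P\mapsto A$. The remaining steps are routine bookkeeping.
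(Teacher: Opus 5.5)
Your proposal is correct and follows the paper's proof essentially step for step: the block companion bijection reduces the count to $\Phi(((m^d),\emptyset),f)$, Theorem~\ref{thm:characteristic-extension-count} supplies the symmetric-function factor, and the prefactor collapses to $\gamma_q(m)$ using $\pi((m^d),\emptyset)=\gamma_q(m(d-1))q^{m^2(d-1)}$ together with the $q$-binomial identity. Your invariant-factor argument that $\det P=\det(xI-A)$ (both monic of degree $md$, with the same nonunit invariant factors) justifies a step that the paper only asserts.
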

  \begin{proof}
Let $n=md$ and $\mu=(m^d)$. As in the proof of Theorem~\ref{thm:matrix-polynomial-smith}, each polynomial $P=x^dI+x^{d-1}C_{d-1}+\cdots+C_0\in \MM_m(d;q)$ determines the block companion matrix in Equation~\eqref{eq:bcm}. This gives a bijection from $\MM_m(d;q)$ to the set of extensions of the partial map $T$ defined by the first $m(d-1)$ columns of Equation~\eqref{eq:bcm}. By Proposition~\ref{prop:defchar}, the map $T$ is simple with defect dimensions $\mu=(m^d)$, so its similarity invariants are $(\mu,\emptyset)$. Moreover, the characteristic polynomial of the block companion matrix is $\det P$. Therefore the number of elements of $\MM_m(d;q)$ with determinant $f$ is exactly $\Phi((\mu,\emptyset),f)$.

Applying Theorem~\ref{thm:characteristic-extension-count} gives
    \begin{align*}
      \Phi((\mu,\emptyset),f)=\frac{\gamma_q(md)}{{md \brack m}_q\pi((m^d),\emptyset)}\langle C_f(x;q),B_{(m^d)}(x;q)\rangle.
    \end{align*}
By Proposition~\ref{prop:partialsize},      $\pi((m^d),\emptyset)=\gamma_q(m(d-1))q^{m^2(d-1)}.$ Using the identity
    \begin{align*}
      {md \brack m}_q=\frac{\gamma_q(md)}{\gamma_q(m)\gamma_q(m(d-1))q^{m^2(d-1)}},
    \end{align*}
the prefactor simplifies to $\gamma_q(m)$, as claimed. 
  \end{proof}

\begin{remark}
The case of Corollary~\ref{cor:matrix-polynomial-determinant} in which $f$ is irreducible is connected to a problem of Niederreiter arising from pseudorandom number generation. Through the block companion construction in the proof, counting matrix polynomials in $\MM_m(d;q)$ with a fixed irreducible determinant is equivalent to counting block companion matrices with a fixed irreducible characteristic polynomial. For the original motivation and the solution, see Niederreiter~\cite{MR1334623} and Chen--Tseng~\cite{MR3093853}, respectively.
\end{remark}

  The following result was originally proved independently by Gerstenhaber~\cite{Mg1961} and Reiner~\cite{Ir1961}. 
  \begin{corollary}
    For each monic polynomial $f=\prod_{i=1}^r f_i^{n_i}$ of degree $n$, the number of matrices in $\MM_n(\Fq)$ with characteristic polynomial $f$ is given by
    \begin{equation*}
      \frac{\gamma_q(n)q^{-n}}{\prod_{i=1}^r (q^{-d_i};q^{-d_i})_{n_i}},
    \end{equation*}
    where $d_i=\deg f_i$ for $1\leq i\leq r$.
  \end{corollary}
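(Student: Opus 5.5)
The plan is to specialize Theorem~\ref{thm:characteristic-extension-count} to the case where no columns are prescribed. Then the Hall scalar product collapses to the coefficient of $m_{(n)}$, and that coefficient can be evaluated in closed form.

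\textbf{Step 1: the empty partial map.} Take $W=\{0\}$, so that $T$ is the empty map. Its similarity invariants are $(\mu,\emptyset)$ with $\mu=(n)$, since $\mu_1=\dim V-\dim W=n$. Every operator on $\Fq^n$ is an extension of $T$, so the desired number equals $\Phi(((n),\emptyset),f)$.

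\textbf{Step 2: the prefactor.} We have ${n\brack n}_q=1$. In Proposition~\ref{prop:partialsize} take $k=d=0$, which gives
\[
\pi((n),\emptyset)=\gamma_q(0)\,{n\brack 0}_q=1.
\]
For $\mu=(n)$ the sign and the power of $t$ in $B_{(n)}$ are both trivial. Moreover $\widetilde W_{(n)}=\omega P_{(1^n)}=\omega e_n=h_n$, because $P_{(1^n)}(x;t)=e_n$. Theorem~\ref{thm:characteristic-extension-count} therefore gives
\[
\Phi(((n),\emptyset),f)=\gamma_q(n)\,\langle C_f(x;q),h_n\rangle.
\]

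\textbf{Step 3: evaluating the scalar product.} Since $\langle m_\lambda,h_n\rangle=\delta_{\lambda,(n)}$, for homogeneous $g$ of degree $n$ the scalar product $\langle g,h_n\rangle$ is the coefficient of $m_{(n)}$ in $g$. Equivalently, it is the specialization $x_1=1$, $x_j=0$ for $j\geq 2$. This specialization is a ring homomorphism, so it factors over the product defining $C_f$. For each factor I use two facts.
\begin{itemize}
\item Plethysm by $p_{d_i}$ replaces $x_j$ by $x_j^{d_i}$ and $t$ by $t^{d_i}$, in line with the remark after Definition~\ref{def:class-type}. So under this specialization the factor $p_{d_i}\circ h_{n_i}\left[\frac{X}{t-1}\right]$ at $t=q$ becomes $h_{n_i}\left[\frac{1}{u^{-1}-1}\right]$ with $u=q^{-d_i}$.
\item We have $\frac{1}{u^{-1}-1}=\frac{u}{1-u}=u+u^2+\cdots$. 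So this value is $h_{n_i}(u,u^2,u^3,\ldots)$, which by the standard principal specialization equals $u^{n_i}/(u;u)_{n_i}$.
\end{itemize}
Taking the product over $i$ gives
\[
\langle C_f(x;q),h_n\rangle=\prod_{i=1}^r \frac{q^{-d_in_i}}{(q^{-d_i};q^{-d_i})_{n_i}}=\frac{q^{-n}}{\prod_{i=1}^r (q^{-d_i};q^{-d_i})_{n_i}},
\]
since $\sum_i d_in_i=n$. Multiplying by $\gamma_q(n)$ gives the stated formula.

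\textbf{Main obstacle.} The delicate step is Step 3, in particular treating the plethystic alphabet $X/(t-1)$ correctly, including how $p_{d_i}$ acts on the parameter $t$. As a check, or as a backup route, I would bypass plethysm: use Lemma~\ref{lem:censum} and Equation~\eqref{eq:cf} directly. Each $\widetilde H_\lambda$ has $m_{(n)}$-coefficient $1$, since a single invariant flag $0\subset V$ is counted. So the scalar product equals
\[
\prod_i\sum_{\lambda\vdash n_i}1/c_{q^{d_i}}(1,\lambda),
\]
and this should match the principal specialization computed above.
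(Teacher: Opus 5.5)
Your proof is correct and follows essentially the paper's route. The paper takes $d=1$ in Corollary~\ref{cor:matrix-polynomial-determinant}, which is the same as your specialization of Theorem~\ref{thm:characteristic-extension-count} at $\mu=(n)$ with $W=\{0\}$; it then uses $B_{(n)}=h_n$, factors $\langle C_f,h_n\rangle$ over the irreducible factors, and evaluates each factor. Your evaluation at $x=(1,0,0,\ldots)$ through the alphabet $u/(1-u)$ is a compact plethystic version of the paper's power-sum expansion $p_d\circ h_k[X/(t-1)]=\sum_{\lambda\vdash k} z_\lambda^{-1}\prod_j (t^{d\lambda_j}-1)^{-1}p_{d\lambda}$, and both arguments finish with the same principal specialization of $h_k$.
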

  \begin{proof}
    Taking $d=1$ in Corollary~\ref{cor:matrix-polynomial-determinant}, the number of $n\times n$ matrices with characteristic polynomial $f$ equals
    \begin{align*}
      \gamma_q(n)\langle C_f(x;q),B_{(n)}(x;q)\rangle.
    \end{align*}
Since $B_{(n)}(x;t)=\widetilde{W}_n(x;t)=h_n$, it remains to compute $\langle C_f(x;t),h_n \rangle$, which is equal to the coefficient of $m_n(x)$ in the monomial expansion of $C_f(x;t)$ (since $m_\lambda$ and $h_\lambda$ are dual bases). Since $C_f(x;t)=\prod_{i=1}^rC_{f_i^{n_i}}(x;t)$, it can be seen, by comparing coefficients, that
    \begin{align*}
      \langle C_f(x;t),h_n \rangle&=\prod_{i=1}^r   \langle C_{f_i^{n_i}}(x;t),h_{d_i n_i} \rangle.
    \end{align*}
    We claim that
    \begin{equation}\label{eq:principal-specialization-plethysm}
     \langle    p_{d}\circ h_{k}\left[ \frac{X}{t-1}  \right],h_{kd} \rangle=\frac{t^{-kd}}{(t^{-d};t^{-d})_k}. 
    \end{equation}
    To see this, begin with the identity $h_k=\sum_{\lambda \vdash k}p_\lambda/z_\lambda$, where $z_\lambda=\langle p_\lambda,p_\lambda \rangle=\prod_{i\geq 1}i^{m_i(\lambda)}m_i(\lambda)!$. We have
    \begin{align*}
      h_k\left[ \frac{X}{t-1} \right]=\sum_{\lambda \vdash k}\frac{1}{z_\lambda}\frac{1}{\prod_{j\geq 1}(t^{\lambda_j}-1)}p_\lambda.
    \end{align*}
    Therefore
    \begin{align*}
     p_d\circ h_k\left[ \frac{X}{t-1} \right]=\sum_{\lambda \vdash k}\frac{1}{z_\lambda}\frac{1}{\prod_{j\geq 1}(t^{d\lambda_j}-1)}p_{d\lambda},
    \end{align*}
    where $d\lambda$ denotes the partition $(d\lambda_1,d\lambda_2,\ldots)$. The coefficient of $m_{kd}$ in the monomial expansion of the sum above is given by
    \begin{align*}
      \sum_{\lambda \vdash k}\frac{1}{z_\lambda}\frac{1}{\prod_{j\geq 1}(t^{d\lambda_j}-1)}&=\sum_{\lambda \vdash k}\frac{1}{z_\lambda}\frac{t^{-kd}}{\prod_{j\geq 1}(1-t^{-d\lambda_j})}\\
                                                                                           &=t^{-kd}\sum_{\lambda \vdash k}\frac{1}{z_\lambda}p_\lambda(1,t^{-d},t^{-2d},\ldots)\\
                                                                                           &=t^{-kd}h_k(1,t^{-d},t^{-2d},\ldots)\\
      &=\frac{t^{-kd}}{(t^{-d};t^{-d})_k},
    \end{align*}
    where the last equality follows from the stable principal specialization identity for $h_k$ (see Stanley~\cite[Prop.~7.8.3]{MR1676282}). The result now follows from the claim and the fact that $\sum n_id_i=n.$
  \end{proof}
\section{Matrices with given support in a conjugacy class}\label{sec:supports-conjugacy}
In this section we consider the problem of counting matrices in a conjugacy class with some entries specified to be zero. More precisely, given a subset $S\subseteq [n]\times [n]$ and a conjugacy class $\CC$ of $\MM_n(\Fq)$, we would like to enumerate the number of matrices $A=(a_{ij})\in \CC$ such that $a_{ij}=0$ whenever $(i,j)\notin S$. Let $N(\CC;S)$ denote the number of such matrices; this is $N(\CC;S;0)$ in the notation of Definition~\ref{def:prescribed-count}. To each such support $S$, we associate a directed graph $D=D_S$ with vertex set $[n]$ and edge set $S$. Let $A_S$ denote the adjacency matrix of $D_S$.
\begin{example}
  Let $n=5$ and consider the support
  \[
S = \{ (1,2),\ (1,5),\ (2,3),\ (3,1),\ (4,4),\ (5,2),\ (5,5) \}.
\]
In this case $D_S$ is the following digraph, with adjacency matrix $A_S$:
\[
\begin{tikzpicture}[baseline=(current bounding box.center),scale=.7,shorten >=1pt,->,auto,node distance=2cm,
                    main node/.style={circle,draw,minimum size=0.8cm}]
  \foreach \x in {1,...,5}
    \node[main node] (n\x) at (\x*72:2cm) {\x};
  
  \path
    (n1) edge (n2)
    (n1) edge (n5)
    (n2) edge (n3)
    (n3) edge (n1)
    (n4) edge [loop above] (n4)
    (n5) edge [loop below] (n5)
    (n5) edge (n2);
\end{tikzpicture}
\qquad
A_S = \begin{pmatrix}
0 & 1 & 0 & 0 & 1 \\
0 & 0 & 1 & 0 & 0 \\
1 & 0 & 0 & 0 & 0 \\
0 & 0 & 0 & 1 & 0 \\
0 & 1 & 0 & 0 & 1 \\
\end{pmatrix}.
\]
\end{example}
\begin{definition}
Two supports $S$ and $\tilde{S}$ are \emph{permutation similar} if there exists a permutation matrix $P$ such that $A_{\tilde{S}}=P A_SP^{-1}$.
\end{definition}
Since conjugation of the adjacency matrix by a permutation matrix corresponds to the natural action of the symmetric group by permuting vertex labels of the digraph, it follows that the digraphs $D_S$ and $D_{\tilde{S}}$ are isomorphic if and only if $S$ is permutation similar to $\tilde{S}$. 
\begin{proposition}
If $S$ and $\tilde{S}$ are permutation similar, then $N(\CC;S)=N(\CC;\tilde{S})$.  
\end{proposition}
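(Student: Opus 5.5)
The plan is to exhibit an explicit bijection between the two sets being counted, namely conjugation by the permutation matrix $P$. Write $\tilde{S}$ for the support with $A_{\tilde{S}}=PA_SP^{-1}$, and let $\sigma$ be the permutation of $[n]$ with $Pe_j=e_{\sigma(j)}$. Then $P=(\delta_{i,\sigma(j)})$. For any matrix $M=(m_{ij})$, the conjugate satisfies
\[
(PMP^{-1})_{\sigma(i)\sigma(j)}=m_{ij}.
\]
This is a direct entrywise computation using $P^{-1}=P^{T}$.

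First, I will apply this identity to $M=A_S$. It gives $(A_{\tilde{S}})_{\sigma(i)\sigma(j)}=(A_S)_{ij}$. Hence $\tilde{S}=\{(\sigma(i),\sigma(j)):(i,j)\in S\}$, so $(i,j)\notin S$ if and only if $(\sigma(i),\sigma(j))\notin\tilde{S}$.

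Next, define $\Psi(A)=PAP^{-1}$ for $A\in\MM_n(\Fq)$. Suppose $A\in\CC$ and $a_{ij}=0$ for all $(i,j)\notin S$. Then $\Psi(A)\in\CC$, because $P\in\GL_n(\Fq)$ and conjugacy classes are stable under conjugation. Now take any position $(k,l)\notin\tilde{S}$ and write $(k,l)=(\sigma(i),\sigma(j))$. Then $(i,j)\notin S$, and the identity gives $\Psi(A)_{kl}=a_{ij}=0$. So $\Psi$ maps the set counted by $N(\CC;S)$ into the set counted by $N(\CC;\tilde{S})$.

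By symmetry, since $A_S=P^{-1}A_{\tilde{S}}P$, the map $A\mapsto P^{-1}AP$ sends the second set into the first. This map is inverse to $\Psi$, so $\Psi$ is a bijection and $N(\CC;S)=N(\CC;\tilde{S})$.

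The only real care needed is the index bookkeeping in the entrywise identity, that is, checking that $P$ acts on both the row and the column index by the same $\sigma$. There is no genuine obstacle beyond that.
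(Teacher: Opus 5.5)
Your proposal is correct and takes the same approach as the paper, which simply notes that $P\CC P^{-1}=\CC$ for every permutation matrix $P$. Your version fills in what that one-line proof leaves implicit: the entrywise identity $(PMP^{-1})_{\sigma(i)\sigma(j)}=m_{ij}$, and the check that conjugation by $P$ carries the support condition for $S$ exactly to the support condition for $\tilde{S}$.
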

\begin{proof}
Follows from the fact that $\CC=P\CC P^{-1}$ for each conjugacy class $\CC$ of $\MM_n(\Fq)$ and every $n\times n$ permutation matrix $P$.
\end{proof}

\begin{problem}\label{prob:prescribed}
  Given $\CC$ and $S$, determine $N(\CC;S)$.
\end{problem}
 Though the general case of the above problem is open we now discuss some cases where an answer can be gleaned from known results.

  \subsection{Hessenberg spaces and adjoint orbits}\label{subsec:hessenberg-spaces}
One can give a symmetric function formula for $N(\CC;S)$ for a specific class of supports $S$. We begin with some definitions. A generalization of the chromatic polynomial of a graph, called the chromatic symmetric function was originally introduced by Stanley~\cite{MR1317387}. A one parameter deformation of this function called the chromatic quasisymmetric function was defined by Shareshian and Wachs~\cite{MR3488041}. Let $G$ be a graph with vertex set $[n]$. Let $\PP$ denote the set of positive integers and consider functions $\kappa:G\to \PP$ defined on the vertex set of $G$. Denote by ${\rm asc}(\kappa)$, the number of edges $\{i,j\}$ of of $G$ with $i<j$ and $\kappa(i)<\kappa(j)$. The chromatic quasisymmetric function of $G$ is defined by
\begin{align*}
  X_G(x;t):=\sum_{\substack{\kappa:[n]\to \PP\\ \kappa \text{ proper}}}t^{{\rm asc}(\kappa)}x_{\kappa(1)}x_{\kappa(2)}\cdots x_{\kappa(n)},
\end{align*}
where the sum is taken over all \emph{proper} colorings (adjacent vertices do not receive the same color) of the vertex set of $G$.
\begin{definition}\label{def:hessenberg-function}
A Hessenberg function is a weakly increasing function $\mm:[n]\to [n]$ satisfying $\mm(i)\geq i$ for each $i\in [n]$.  
\end{definition}

It will be convenient to represent a Hessenberg function $\mm$ defined on $[n]$ by the tuple $(\mm(1),\ldots,\mm(n))$. To each such Hessenberg function $\mm$, we associate an indifference graph $G(\mm)$ which has vertex set $[n]$ and edge set $\{\{i,j\}:1\leq i<j\leq \mm(i)\}$. For this specific class of graphs it is known that the chromatic quasisymmetric function $X_{G(\mm)}(x;t)$ is in fact symmetric. 
\begin{definition}\label{def:hessenberg-variety}
Given an operator $T:\Fq^n\to \Fq^n$, the Hessenberg variety $\he(\mm,T)$ is defined as the collection of all complete flags $V_1\subset V_2\subset \cdots \subset V_n=\Fq^n$ such that $TV_i\subseteq V_{\mm(i)}$ for each $i\in [n]$.  
\end{definition}

The following result was recently proved by Abreu, Nigro and Ram~\cite[Thm.~1.1]{abreu2024}.
\begin{theorem}\label{thm:points}
 For each operator $T$ on $\Fq^n$ with similarity invariants $\LL$, the number of $\Fq$-rational points on the Hessenberg variety $\he(\mm,T)$ is given by 
  $$
  |\he(\mm,T)|=\langle F_\LL(x;q), \omega X_{G(\mm)}(x;q)\rangle.
  $$
\end{theorem}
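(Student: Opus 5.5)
The plan is to reduce the identity to two structural properties that determine both sides uniquely: multiplicativity under the decomposition of Hessenberg functions, and the \emph{modular law} of Abreu--Nigro, together with agreement on complete graphs.

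Fix $T$ with similarity invariants $\LL$ and write $h(\mm):=|\he(\mm,T)|$ and $r(\mm):=\langle F_\LL(x;q),\omega X_{G(\mm)}(x;q)\rangle$. By Definition~\ref{def:invflag} and the duality of $m_\nu$ and $h_\nu$, we have $\langle F_\LL(x;q),h_\nu\rangle=E_\nu(T)$. Hence $r(\mm)$ is obtained from the $h$-expansion of $\omega X_{G(\mm)}(x;q)$ by replacing $h_\nu$ with $E_\nu(T)$. It then suffices to check that $h$ and $r$ satisfy the same defining relations. The same should hold for the universal versions, in which $T$ ranges over all operators of all sizes, since the relations involve restrictions and quotients.

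The key steps are as follows.

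\textbf{Complete graphs.} Take $\mm(i)=n$ for all $i$. Then every complete flag lies in $\he(\mm,T)$, so $h(\mm)=[n]_q!$ up to the usual normalisation of counting complete flags. On the other side, $X_{K_n}(x;t)=[n]_t!\,e_n$, so
\[
r(\mm)=[n]_q!\,\langle F_\LL,h_n\rangle=[n]_q!\,E_{(n)}(T)=[n]_q!,
\]
and the two sides agree.

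\textbf{Multiplicativity.} Suppose $\mm(k)=k$ for some $k<n$, so that $G(\mm)$ is the disjoint union of $G(\mm_1)$ on $[k]$ and the shifted $G(\mm_2)$. A flag lies in $\he(\mm,T)$ exactly when $V_k$ is $T$-invariant, the induced flag of $V_k$ lies in $\he(\mm_1,T|_{V_k})$, and the induced flag of $V/V_k$ lies in $\he(\mm_2,\hat T)$. Summing over $T$-invariant subspaces of dimension $k$ gives the point count. On the symmetric function side, $X_{G(\mm)}=X_{G(\mm_1)}X_{G(\mm_2)}$. The pairing $\langle F_T,fg\rangle$ equals the sum over $T$-invariant $U$ of $\langle F_{T|_U},f\rangle\langle F_{\hat T_U},g\rangle$. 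This coproduct property of $F_T$ follows from the definition via flag counts, since an invariant flag through $U$ splits uniquely into invariant flags of $U$ and of $V/U$. Hence $r$ satisfies the same recursion as $h$.

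\textbf{Modular law.} For triples $\mm_1,\mm_2,\mm_3$ of the Abreu--Nigro form, which differ at a single position in one of the two admissible configurations, the chromatic quasisymmetric functions satisfy $(1+q)X_{\mm_2}=qX_{\mm_1}+X_{\mm_3}$, and so $r$ satisfies the same relation. I would prove $(1+q)h(\mm_2)=q\,h(\mm_1)+h(\mm_3)$ geometrically. Consider the map forgetting $V_i$ (or $V_{j}$, as appropriate) from each $\he(\mm_a,T)$ to a common partial-flag Hessenberg-type variety. The fibres are subsets of a $\PP^1(\Fq)$ of $q+1$ lines in a 2-dimensional quotient. The Hessenberg conditions cut out either all $q+1$ points or exactly one point, depending on whether a certain induced map is scalar on that 2-dimensional space. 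A fibrewise count shows that the identity holds pointwise over the base: both sides equal $(q+1)^2$ over the first kind of point and $q+1$ over the second.

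\textbf{Conclusion.} Abreu--Nigro show that a function on Hessenberg functions satisfying the modular law is determined by its values on disjoint unions of complete graphs. Combining this with multiplicativity and the complete-graph case gives $h=r$.

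I expect the main obstacle to be the modular law step. One must identify correctly the base space and the fibre structure for each of the two admissible configurations, and check that the over-$\Fq$ fibre counts produce exactly the coefficients $q$ and $1+q$ rather than an equality only after passing to some virtual class.
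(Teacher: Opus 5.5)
The paper gives no proof of this statement; it quotes it from Abreu, Nigro and Ram, so there is no in-paper argument to compare with. Your strategy of combining the modular law with evaluation on disjoint unions of complete graphs is sound. Your complete-graph computation and your recursion for $|\he(\mm,T)|$ are correct. However, the step you call the crux is described incorrectly, and two other steps rely on facts you have not supplied.

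\textbf{The fibre count in the modular law.} The relevant dichotomy is not whether some map is scalar on a $2$-dimensional space. That would be a question about invariant lines of a $2\times 2$ matrix, and a non-scalar one can have $0$, $1$ or $2$ of them, which would break the pointwise identity. In configuration (1), put $j=\mm_1(i)$.
\begin{itemize}
\item The hypotheses $\mm_1(i-1)<j<\mm_1(i+1)$ and $\mm_1(j)=\mm_1(j+1)$ mean that $V_j$ enters only through the condition $TV_i\subseteq V_j$ of $\mm_1$. Its own condition $TV_j\subseteq V_{\mm(j)}$ follows from the condition on $V_{j+1}$.
\item Forget $V_j$. Take as base $Y$ the partial flags satisfying the remaining conditions together with $TV_i\subseteq V_{j+1}$.
\item Since $TV_{i-1}\subseteq V_{j-1}$, the operator $T$ induces a map $\bar T:V_i/V_{i-1}\to V_{j+1}/V_{j-1}$ out of a line.
\item The fibres of $\he(\mm_0,T)$, $\he(\mm_1,T)$, $\he(\mm_2,T)$ over $y\in Y$ have sizes $(q+1,q+1,q+1)$ when $\bar T=0$, and $(0,1,q+1)$ otherwise.
\end{itemize}
So the $\mm_2$-fibre is always all of $\PP^1(\Fq)$, and in the second case the $\mm_0$-fibre is empty, not a point. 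With these sizes the pointwise identity holds.

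In configuration (2), where $\mm_1(i+1)=\mm_1(i)+1=j+1$ and $\mm_1^{-1}(i)=\emptyset$, you forget $V_i$ instead. The same pattern then comes from $\bar T:V_{i+1}/V_{i-1}\to V_{j+1}/V_j$. Alternatively, (2) is the transpose of (1). The map $V_\bullet\mapsto V_\bullet^{\perp}$ gives $|\he(\mm,T)|=|\he(\mm^\dagger,T^{t})|$ for the reflected Hessenberg function $\mm^\dagger$, and $T^{t}$ is similar to $T$.

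\textbf{Two further points need filling in.}

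First, test your coproduct identity $\langle F_T,fg\rangle=\sum_U\langle F_{T|_U},f\rangle\langle F_{\hat T_U},g\rangle$ on $f=h_\alpha$, $g=h_\beta$. It says that the number of $T$-invariant flags of the concatenated composition type $(\alpha,\beta)$ equals $E_{\alpha\cup\beta}(T)$, the count for the sorted partition. Splitting flags through $U$ gives only the first of these counts. You also need that invariant-flag counts are unchanged by permuting the composition. This is true, by commutativity of the Hall algebra of finite-dimensional $\Fq[x]$-modules, but it must be invoked. The remark in Definition~\ref{def:permutation-characters} covers only $g\in\GL_n(\Fq)$.

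Second, the Abreu--Nigro uniqueness theorem is proved over $\QQ(q)$ with $q$ formal, whereas you apply it at a fixed prime power. Their reduction does not simply solve each three-term relation for one unknown at a time. Let $P_4$ be the path with $\mm=(2,3,4,4)$. For every function $f$ satisfying the modular law, the coefficient of $f(K_4)$ in the expansion of $f(P_4)$ is $[4]_q/[4]_q!=1/([2]_q[3]_q)$. To see this, apply the expansion to the function $\mm\mapsto$ ($e_4$-coefficient of $X_{G(\mm)}$). So the reduction inverts quantities other than $q$ and $1+q$, and you must check that every pivot in it is nonzero at your $q$.
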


To each Hessenberg function $\mm$ defined on $[n]$, we associate the Hessenberg space $\mathcal{H}_{\mm}=\mathcal{H}_{\mm}(q)$ which consists of all matrices in $A=(a_{ij})\in\MM_n(\Fq)$ satisfying $a_{ij}=0$ whenever $i>\mm(j)$. 
\begin{example}
If $\mm=(2,2,3,4)$, then $\mathcal{H}_\mm(q)$ consists of all matrices over $\Fq$ of the form
\begin{align*}
  \begin{pmatrix}
    * & * & * & *\\
    * & * & * & *\\
    0 & 0 & * & *\\
    0 & 0 & 0 & *
  \end{pmatrix}.
\end{align*}
The associated unit interval graph $G(\mm)$ has edge set $\{\{1,2\}\}$:
\begin{center}
\begin{tikzpicture}[baseline=-0.5ex, every node/.style={circle, draw, inner sep=1.5pt, minimum size=16pt}]
  \node (1) at (0,0) {$1$};
  \node (2) at (1.2,0) {$2$};
  \node (3) at (2.4,0) {$3$};
  \node (4) at (3.6,0) {$4$};
  \draw (1) -- (2);
\end{tikzpicture}
\end{center}
In this case $X_{G(\mm)}(x;t)=(1+t)e_{211}.$
\end{example}
The space $\mathcal{H}_\mm$ consists of matrices with support $S=\{(i,j):i\leq \mm(j)\}$. We refer to such a support as a Hessenberg support.

The following result is the matrix version of Theorem \ref{thm:points}.

\begin{theorem}\label{thm:hessmats}
  The number of $n\times n$ matrices in $\mathcal{H}_\mm(q)$ which have similarity invariants $\LL$ is given by
  \begin{align*}
  |\mathcal{H}_\mm(q)\cap \CC(\LL)|=  \frac{q^{n \choose 2}(q-1)^n}{c(\LL)} \langle F_\LL(x;q), \omega X_{G(\mm)}(x;q)\rangle,
  \end{align*}
  where $c(\LL)$ denotes the centralizer size corresponding to $\LL$.
\end{theorem}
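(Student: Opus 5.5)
The plan is a double count of pairs (flag, matrix), using Theorem~\ref{thm:points} as the only nontrivial input. Let $B$ be the Borel subgroup of upper triangular matrices in $G=\GL_n(\Fq)$, and let $E_\bullet$ be the standard flag $E_i=\mathrm{span}\{e_1,\ldots,e_i\}$.

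\textbf{Step 1: reformulate membership in $\mathcal{H}_\mm$.} A matrix $A$ lies in $\mathcal{H}_\mm$ exactly when $AE_j\subseteq E_{\mm(j)}$ for every $j$. Indeed, $Ae_j=\sum_i a_{ij}e_i$ lies in $E_{\mm(j)}$ iff $a_{ij}=0$ for all $i>\mm(j)$. Because $\mm$ is weakly increasing, the condition on the basis vectors $e_1,\ldots,e_j$ gives the condition on all of $E_j$.

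\textbf{Step 2: double count.} Consider pairs $(A,V_\bullet)$ with $A\in\CC(\LL)$ and $V_\bullet$ a complete flag satisfying $AV_i\subseteq V_{\mm(i)}$ for all $i$.
\begin{itemize}
\item Fixing $A$ first: the number of admissible flags is $|\he(\mm,A)|$, which is the same for every $A\in\CC(\LL)$. So the total is $|\CC(\LL)|\,\langle F_\LL(x;q),\omega X_{G(\mm)}(x;q)\rangle$ by Theorem~\ref{thm:points}.
\item Fixing the flag first: $G$ acts transitively on complete flags, and conjugation preserves both the class $\CC(\LL)$ and the Hessenberg condition. Hence the number of admissible $A$ is the same for every flag. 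For $V_\bullet=E_\bullet$ it equals $|\mathcal{H}_\mm\cap\CC(\LL)|$ by Step 1.
\end{itemize}
Since the number of complete flags is $|G/B|$, the total is also $|G/B|\cdot|\mathcal{H}_\mm\cap\CC(\LL)|$.

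\textbf{Step 3: simplify constants.} Equating the two counts and using $|\CC(\LL)|=\gamma_q(n)/c(\LL)$ and $|G/B|=\gamma_q(n)/|B|$ gives
\[
|\mathcal{H}_\mm\cap\CC(\LL)|=\frac{|B|}{c(\LL)}\langle F_\LL(x;q),\omega X_{G(\mm)}(x;q)\rangle.
\]
Finally $|B|=(q-1)^nq^{n\choose 2}$, which is the claimed formula.

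There is no real obstacle. The only point requiring care is the index convention in Step 1: the support condition "$a_{ij}=0$ for $i>\mm(j)$" must match "$TV_i\subseteq V_{\mm(i)}$" column by column, and it does.
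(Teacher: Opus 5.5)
Your argument is correct and is the double count of pairs (matrix in $\CC(\LL)$, Hessenberg flag) that the paper leaves implicit when it calls this result the matrix version of Theorem~\ref{thm:points}; the constants simplify exactly as you say, since $|\CC(\LL)|\,|B|/\gamma_q(n)=|B|/c(\LL)$ with $|B|=(q-1)^nq^{\binom{n}{2}}$. Your Step 1 check, that the support condition $a_{ij}=0$ for $i>\mm(j)$ is equivalent to $AE_j\subseteq E_{\mm(j)}$ (using that $\mm$ is weakly increasing), is the one detail the paper does not spell out, and you handle it correctly.
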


Recall Definition \ref{def:class-type} of class type. For a fixed class type $\tau$ of size $n$ there always exists a matrix of type $\tau$ over $\Fq$ for sufficiently large prime powers $q$.

\begin{definition}\label{def:class-type-polynomiality-property}
Let $S\subseteq [n]\times [n]$, and let $V_S(q)\subseteq \MM_n(\Fq)$ denote the coordinate subspace of matrices supported on $S$. We say that $S$ has the \emph{class type polynomiality property} if, for every class type $\tau$ of size $n$, there exists a polynomial $P_{S,\tau}(t)$ such that, for every sufficiently large prime power $q$ and every conjugacy class $\CC\subseteq \MM_n(\Fq)$ of class type $\tau$,
\[
  |V_S(q)\cap \CC|=P_{S,\tau}(q).
\]
\end{definition}

Theorem \ref{thm:hessmats} implies that every Hessenberg support has the class type polynomiality property. Indeed, for a given class type $\tau$, the number of matrices in the Hessenberg space $\mathcal{H}_{\mm}(q)$ which lie in a conjugacy class of class type $\tau$ is a rational function in $q$ and, therefore, a polynomial in $q$ (since it takes integer values for sufficiently large prime powers $q$).

\begin{problem}\label{prob:class-type-polynomiality-supports}
  Determine all supports which have this class type polynomiality property. 
\end{problem}

\begin{corollary}
Let  $\mm$ be a Hessenberg function on $[n]$ and suppose $f=\prod_{i=1}^r f_i^{n_i}$ where the $f_i$ are distinct irreducible polynomials over $\Fq$ with $\deg f_i=d_i$ for $1\leq i\leq r$. The number of matrices in $\mathcal{H}_\mm(q)$ with characteristic polynomial $f$ is given by
  \begin{align*}
(q-1)^{n}q^{n\choose 2}\langle C_f(x;q), \omega X_{G(\mm)}(x;q) \rangle,
  \end{align*}
    where
    \begin{align*}
    C_f(x;t):=\prod_{i=1}^r p_{d_i}\circ h_{n_i}\left[ \frac{X}{t-1} \right].      
    \end{align*}
  \end{corollary}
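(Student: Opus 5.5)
The plan is to sum Theorem~\ref{thm:hessmats} over all conjugacy classes with characteristic polynomial $f$, in the same way that Theorem~\ref{thm:characteristic-extension-count} was obtained from Corollary~\ref{cor:etasimple}. An operator with similarity invariants $\LL=\{(f_1,\lambda^1),\ldots,(f_r,\lambda^r)\}$ has characteristic polynomial $\prod_i f_i^{|\lambda^i|}$. Hence the matrices in $\mathcal{H}_\mm(q)$ with characteristic polynomial $f$ are the disjoint union, over all $\LL$ with $|\lambda^i|=n_i$ for $1\leq i\leq r$, of the sets $\mathcal{H}_\mm(q)\cap\CC(\LL)$. Theorem~\ref{thm:hessmats} then gives the count
\[
(q-1)^nq^{n\choose 2}\Big\langle \sum_{\LL}\frac{F_\LL(x;q)}{c(\LL)},\ \omega X_{G(\mm)}(x;q)\Big\rangle .
\]
Here the constant prefactor and the bilinearity of the Hall scalar product allow the sum to be moved inside the pairing.

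It remains to identify $\sum_{\LL} F_\LL(x;q)/c(\LL)$ with $C_f(x;q)$, and this is exactly the computation in Equation~\eqref{eq:cf}. Write
\[
F_\LL=\prod_i p_{d_i}[\widetilde H_{\lambda^i}(x;t)], \qquad c(\LL)=\prod_i c_t(d_i,\lambda^i).
\]
From the definition in Equation~\eqref{eq:centralizer}, $c_t(d,\lambda)=p_d[c_t(1,\lambda)]$, since plethysm by $p_d$ substitutes $t\mapsto t^d$ in the coefficients. So each factor equals $p_{d_i}\big[\widetilde H_{\lambda^i}/c_t(1,\lambda^i)\big]$. The sum over the tuple $(\lambda^1,\ldots,\lambda^r)$ factors as a product of independent sums over $\lambda^i\vdash n_i$. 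Each such sum equals $h_{n_i}[X/(t-1)]$ by Lemma~\ref{lem:censum}, and specializing $t=q$ then gives $C_f(x;q)$.

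The only point needing care is the compatibility of plethysm with specialization. We must specialize $t=q$ only after applying $p_{d_i}$, consistent with the convention in Definition~\ref{def:class-type} that $p_d[\widetilde H_\lambda(x;t)]|_{t=q}=p_d[\widetilde H_\lambda(x;q^d)]$. In particular, the factor $c_q(d_i,\lambda^i)$ in Hall's formula must match $p_{d_i}$ applied to $c_t(1,\lambda^i)$ before setting $t=q$. Since this was already handled in the proof of Theorem~\ref{thm:characteristic-extension-count}, it can be cited directly, and no step should be a genuine obstacle.
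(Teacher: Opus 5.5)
Your proposal is correct and follows the paper's proof: you sum Theorem~\ref{thm:hessmats} over the classes $\LL$ with $|\lambda^i|=n_i$, move the sum inside the Hall scalar product, and identify $\sum_{\LL}F_\LL/c(\LL)$ with $C_f$ using the computation in Equation~\eqref{eq:cf} and Lemma~\ref{lem:censum}. Your remark that $c_t(d,\lambda)$ comes from $c_t(1,\lambda)$ by $t\mapsto t^d$, so that the specialization $t=q$ is taken only after applying $p_{d_i}$, is exactly the step left implicit in the second line of Equation~\eqref{eq:cf}.
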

  \begin{proof}
    Sum Theorem~\ref{thm:hessmats} over all conjugacy classes with characteristic polynomial $f$; the resulting symmetric function sum is Equation~\eqref{eq:cf}. 
  \end{proof}

\subsection{Ad-nilpotent ideals and adjoint orbits} 
Let $\mathfrak b_n(\Fq)$ denote the Lie algebra of $n\times n$ upper triangular matrices over $\Fq$ and let $\mathfrak u_n(\Fq)$ denote its nilradical, the Lie algebra of strictly upper triangular matrices. An \emph{ad-nilpotent ideal} of $\mathfrak u_n(\Fq)$ is an ideal of $\mathfrak u_n(\Fq)$ which is stable under the adjoint action of $\mathfrak b_n(\Fq)$. In type $A$, these ideals are parametrized by Hessenberg functions: if $\mm$ is a Hessenberg function on $[n]$, then
\[
\mathfrak u_\mm(\Fq):=\{(a_{ij})\in \mathfrak u_n(\Fq):a_{ij}=0\text{ whenever }j\leq \mm(i)\}
\]
is an ad-nilpotent ideal, and it is known that every ad-nilpotent ideal arises uniquely in this way. For instance, if $\mm=(2,3,4,4)$, then $\mathfrak u_\mm(\Fq)$ consists of all matrices over $\Fq$ of the form
$$
\begin{pmatrix}
0 & 0 & * & * \\
0 & 0 & 0 & * \\
0 & 0 & 0 & 0 \\
0 & 0 & 0 & 0
\end{pmatrix}.
$$
 The next result follows from Theorem 4.4 and Corollary 4.5 in Gagnon~\cite{MR4659580}. \begin{theorem}\label{thm:gag}
 The number of nilpotent matrices with Jordan form partition $\lambda$ that lie in the subspace $\mathfrak u_\mm(\Fq)$ is given by
$$
\frac{q^{{n \choose 2}-e} (q-1)^n}{c_q(1,\lambda)}\langle \widetilde{H}_\lambda(x;q),X_{G(\mm)}(x;q)\rangle,
$$
where $e$ denotes the number of edges of the graph $G(\mm)$.
\end{theorem}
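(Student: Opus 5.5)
The plan is to derive Theorem~\ref{thm:gag} from Gagnon's point-count for a suitable nilpotent Hessenberg-type variety, using a double-counting argument of the same shape as the proof of Proposition~\ref{prop:eta}. We count pairs $(N,\mathcal F)$ where $N$ is a nilpotent matrix with Jordan type $\lambda$ and $\mathcal F=(V_1\subset\cdots\subset V_n)$ is a complete flag such that $N$ sends $V_i$ into $V_{i'}$, where $i'$ is determined by $\mm$. We choose this condition so that, when $\mathcal F$ is the standard flag, it says exactly that $N\in\mathfrak u_\mm(\Fq)$. Concretely, the vanishing $a_{ij}=0$ for $j\le \mm(i)$ translates, for the standard flag, into $N e_j\in \mathrm{span}\{e_i: \mm(i)<j\}$, which is a condition of the form $N V_j\subseteq V_{\mm^*(j)}$ with $\mm^*(j)=\#\{i:\mm(i)<j\}$.

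\textbf{Counting from the flag side.} Complete flags form a single $\GL_n$-orbit, and the stabilizer of the standard flag is the Borel $B$. The number of pairs is therefore
\[
\frac{|\GL_n(\Fq)|}{|B|}\cdot\#\bigl(\mathfrak u_\mm(\Fq)\cap \CC_\lambda\bigr),
\qquad |B|=q^{\binom n2}(q-1)^n .
\]

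\textbf{Counting from the nilpotent side.} Fixing $N$ in the class, the number of admissible flags is the point count of the variety $\{\mathcal F: NV_j\subseteq V_{\mm^*(j)}\}$. Summing over the class gives $|\CC_\lambda|=|\GL_n|/c_q(1,\lambda)$ times this point count.

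\textbf{Invoking Gagnon.} The input from Gagnon~\cite[Thm.~4.4, Cor.~4.5]{MR4659580} is that this point count equals $q^{-e}\langle \widetilde H_\lambda(x;q),X_{G(\mm)}(x;q)\rangle$. Equating the two counts then gives
\[
\#\bigl(\mathfrak u_\mm(\Fq)\cap \CC_\lambda\bigr)=\frac{q^{\binom n2}(q-1)^n}{c_q(1,\lambda)}\,q^{-e}\langle\widetilde H_\lambda,X_{G(\mm)}\rangle,
\]
which is the stated formula.

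\textbf{Main obstacle.} The difficulty is matching conventions. First, Gagnon's condition must be identified with the "ideal" condition $NV_j\subseteq V_{\mm^*(j)}$ rather than the Hessenberg condition $NV_i\subseteq V_{\mm(i)}$. Second, the graph $G(\mm)$ (as opposed to a complementary graph), the factor $q^{-e}$, and the absence of $\omega$ in the pairing must all be accounted for. My first attempt at this would be to compare with Theorem~\ref{thm:points}: the ideal condition is the "complement" of a Hessenberg condition, and complementation plausibly accounts for trading $\omega X_{G(\mm)}$ for $X_{G(\mm)}$ together with a power $q^{-e}$, since $X_G$ records ascents on $e$ edges. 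I would check this by comparing coefficients for small $n$, for instance $\mm=(n,\dots,n)$, where $\mathfrak u_\mm=0$ and $G$ is complete.
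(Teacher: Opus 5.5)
Your argument is correct and follows the paper's route. The paper simply deduces the statement from Gagnon's Theorem~4.4 and Corollary~4.5, and your double count is exactly the translation between Gagnon's formulation and the intersection count: it is the flag-count analogue of the induced-character identity in Section~\ref{subsec:orbit-intersections-induced-characters}, with $B$ and the indicator of $1+\mathfrak u_\mm$ in place of $H$, and you identify the ideal condition correctly as $NV_j\subseteq V_{\mm^*(j)}$.

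The ``complementation'' check in your last paragraph is unnecessary, and it is not a mere matching of conventions. Since $\mathfrak u_\mm^\perp=\mathcal H_\mm$ under the trace form, getting the ideal count from Theorem~\ref{thm:points} would be a Fourier-transform argument on $\MM_n(\Fq)$, and that argument needs the Hessenberg point counts for all classes, not just nilpotent ones.
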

Another proof of Theorem \ref{thm:gag} and a tableau formula for the count was given by Bardestani, Mallahi-Karai, Ram and Salmasian~\cite{BardestaniMallahiKaraiRamSalmasian2026}. 

\begin{remark}
 When $e=0$, the formula above agrees with Theorem \ref{thm:hessmats} for the number of nilpotent upper triangular matrices in the conjugacy class $\{(x,\lambda)\}$ since $\mm=(1,2,\ldots,n)$ in this case and  $X_{G(\mm)}=e_{1^n}=h_{1^n}$.  
\end{remark}

\section{Conclusion}\label{sec:conclusion}
Several classes of symmetric functions, such as Hall--Littlewood polynomials and their variants, $q$-Whittaker functions, and chromatic quasisymmetric functions, appear in the enumeration formulas for matrices with prescribed entries in an adjoint orbit. It would be interesting to identify further symmetric functions that occur in this setting.
 
The open problems raised above can be organized by theme:
\begin{itemize}
\item \emph{Affine slices and supports}: Problems~\ref{prob:main} and \ref{prob:prescribed};
\item \emph{Restrictions of partial linear maps}: Problems~\ref{prob:coeff} and \ref{prob:restriction-poset};
\item \emph{Symmetric-function positivity}: Conjecture~\ref{conj:skewHLschurpositive} and Problem~\ref{prob:skewHLschurcoeffs};
\item \emph{Polynomiality of counting functions}: Problem~\ref{prob:class-type-polynomiality-supports}.
\end{itemize}
With these themes in mind, the following questions seem natural.

\begin{question}\label{ques:symmetric-functions}
For which supports $S$ and prescriptions $\varphi$ can the affine-slice counts $N(\CC;S;\varphi)$ be expressed as scalar products of symmetric functions?
\end{question}

\begin{question}\label{ques:positive-combinatorics}
Do the extension counts $\eta((\mu,\MM),\LL)$ admit combinatorial formulas, for instance in terms of tableaux?
\end{question}

\section{Acknowledgements}
The author acknowledges with appreciation support from an Indo-Russian project, grant number DST/INT/RUS/RSF/P41/2021.

OpenAI's Prism LaTeX editor was used to proofread a manually prepared first draft of this article, and for expository rephrasing and numerical verification of combinatorial formulas. All mathematical statements and proofs are the sole responsibility of the author.

\printbibliography  
\end{document}